\documentclass[11pt,reqno]{amsart}

\usepackage{amsmath,amssymb,amsfonts,amsthm,enumerate,natbib,color,ifthen,hyperref}
\usepackage{xargs}
\usepackage[textwidth=4cm, textsize=footnotesize]{todonotes}
\setlength{\marginparwidth}{4cm}    
\usepackage[latin1]{inputenc}

\usepackage{float}

\usepackage[small,bf]{caption}

\usepackage{graphicx}

\usepackage{aliascnt,bbm}
\textwidth 5.6in


\def\rset{\mathbb R}
\def\zset{\mathbb Z}

\def\eqsp{\;}

 \newcommand{\pscal}[2]{\left\langle#1,#2\right\rangle}

\newcommand{\eqdef}{\ensuremath{\stackrel{\mathrm{def}}{=}}}

\def\F{\mathcal{F}} 

\def\M{\mathcal{M}}

\def\D{\mathcal{D}}

\newcommandx\sequence[3][2=t,3=\zset]
{\ifthenelse{\equal{#3}{}}{\ensuremath{\{ #1_{#2}\}}}{\ensuremath{\{ #1_{#2}, \eqsp #2 \in #3 \}}}}

\def\PP{\mathbb{P}} 
\newcommand{\CPP}[3][]
{\ifthenelse{\equal{#1}{}}{{\mathbb P}\left(\left. #2 \, \right| #3 \right)}{{\mathbb P}_{#1}\left(\left. #2 \, \right | #3 \right)}}
\def\PE{\mathbb{E}} 
\newcommand{\CPE}[3][]
{\ifthenelse{\equal{#1}{}}{{\mathbb E}\left[\left. #2 \, \right| #3 \right]}{{\mathbb E}_{#1}\left[\left. #2 \, \right | #3 \right]}}




\newcommand{\normfro}[1]{\left\Vert#1\right\Vert_{\textsf{F}}}

\def\Prox{\operatorname{Prox}}

\def \thetaen{\hat\theta}

\usepackage{bm}

%

\theoremstyle{plain}

\newtheorem{theorem}{Theorem}

\newaliascnt{proposition}{theorem}

\aliascntresetthe{proposition}

\newaliascnt{lemma}{theorem}
\newtheorem{lemma}[lemma]{Lemma}
\aliascntresetthe{lemma}
\newaliascnt{corollary}{theorem}
\newtheorem{corollary}[corollary]{Corollary}
\aliascntresetthe{corollary}

\theoremstyle{definition}
\newaliascnt{definition}{theorem}

\aliascntresetthe{definition}

\newtheorem{algorithm}{Algorithm}

\newaliascnt{remark}{theorem}
\newtheorem{remark}[remark]{Remark}
\aliascntresetthe{remark}

\newaliascnt{example}{theorem}

\aliascntresetthe{example}


\def\rmd{\mathrm{d}}

\def\1{\mathbbm{1}}

\newcommand{\sbt}{\mbox{subject to}}
\DeclareMathOperator*{\mini}{minimize}
\DeclareMathOperator*{\maxi}{maximize}
\DeclareMathOperator*{\argmin}{{\textsf{Argmin}}}

\usepackage{rotating}
\renewcommand{\sbt}{\mbox{subject to}}
\newcommand{\diag}{\mathrm{diag}}
\usepackage{verbatim,float}

\usepackage[hmargin=3.3cm,vmargin=3.5cm]{geometry}

\newcommand{\Glasso}{\textsc{Glasso} }
 \newcommand{\mm}{\textbf}
 \newcommand{\quic}{\textsc{Quic~}}
   \newcommand{\matlab}{\textsc{Matlab}}

 \newcommand{\Glassoperiod}{\textsc{Glasso.}}

  \newcommand{\tr}{\text{Tr}}
  \newcommand{\epsilonmod}{\ell}
  \newcommand{\deltamod}{\varepsilon}
  \newcommand{\kappamod}{\psi}
\DeclareMathOperator{\sign}{sign}

 \newenvironment{myarray}[2][1]
  {\array{#2}}
  {\endarray}

 \usepackage[noend]{algorithmic} 

\begin{document}


\title[Precision Matrix Computation via Stochastic Optimization]{Scalable Algorithms for Regularized Precision Matrices via Stochastic Optimization}

\author{Yves F. Atchad\'e}  \thanks{ Yves F. Atchad\'e: University of Michigan, 1085 South University, Ann Arbor, 48109, MI, USA. {\em E-mail address:} yvesa@umich.edu}
\author{Rahul Mazumder}  \thanks{Rahul Mazumder: MIT Sloan School of Management and Operations Research Center, Cambridge, MA, USA. {\em E-mail address:} rahulmaz@mit.edu}
\author{Jie Chen}  \thanks{Jie Chen: IBM Thomas J. Watson Research Center, 1101 Kitchawan Road, Yorktown Heights, 10598 NY, USA. {\em E-mail address:} chenjie@us.ibm.com}

\subjclass[2000]{60F15, 60G42}

\keywords{Inverse covariance estimation, stochastic optimization, graphical lasso, proximal gradient}
\date{May, 2015}
\maketitle

\begin{abstract}
We consider the problem of computing a positive definite $p \times p$
 inverse covariance matrix aka precision matrix $\theta=(\theta_{ij})$ which optimizes a
regularized Gaussian maximum likelihood problem, with the elastic-net
regularizer
$\sum_{i,j=1}^{p} \lambda (\alpha|\theta_{ij}| + \frac{1}{2}(1- \alpha)
\theta_{ij}^2),$ with regularization parameters $\alpha \in [0,1]$ and $\lambda>0$. 
The
associated convex semidefinite optimization problem is notoriously difficult to scale to
large problems and has demanded significant attention over the past
several years.
 We propose a new algorithmic framework based on 
 stochastic proximal optimization (on the primal problem) that can be used to obtain near optimal
solutions with substantial computational savings over deterministic algorithms. 
A key challenge of our work stems from the fact that the optimization problem being investigated
does not satisfy the usual assumptions required by stochastic gradient methods. 
Our proposal has (a) computational guarantees and (b) scales well to large problems, even if the solution is not too sparse; thereby, enhancing 
the scope of regularized maximum likelihood problems to many large-scale problems of contemporary
interest. 
 An important aspect of our proposal is to
bypass the \emph{deterministic} computation of a matrix inverse 
by drawing random samples from a suitable multivariate Gaussian distribution.
\end{abstract}

%

\section{Introduction}\label{sec:intro}

We consider the problem of estimating an inverse covariance matrix aka precision matrix~\citep{Laur1996} $\theta$, from a data matrix ${X}_{n \times p}$ comprised  
of $n$ samples from a $p$ dimensional  multivariate Gaussian distribution with mean zero and covariance matrix $\Sigma = \theta^{-1}$, i.e., $\mathbf{x}_{i} \stackrel{\text{i.i.d.}}{\sim} \textbf{N}(0 , \Sigma)$ for  $i = 1,\ldots,n$.
If $n < p$ it is a well known fact that the Maximum Likelihood Estimate (MLE) does not 
exist, and even if it does exist ($n \geq p$) the MLE
 can be poorly behaved and regularization is often called for. 
Various forms of regularization are used to improve the statistical behavior of covariance matrix estimates~\citep{pourahmadi2013high,buhlmann2011statistics,FHT-09-new} and is a 
topic of significant interest in the statistics and machine learning communities. This paper deals with the problem of computing such regularized matrices, in the settings where $p$ is much larger than $n$ or both $p$ and $n$ are large.
To motivate the reader, we briefly review two popular forms of precision matrix regularization schemes under a likelihood framework: sparse precision matrix estimation via 
$\ell_{1}$-norm regularization, 
 and its dense counterpart, using an $\ell_{2}$-norm regularization (ridge penalty); both on the entries of the matrix $\theta$.

\subsubsection*{Sparse precision matrix estimation --- the Graphical Lasso}\label{glasso-intro-1}
One of the most popular regularization approaches and the main motivation behind this paper is the Graphical Lasso~\citep{yuan_lin_07,BGA2008,FHT2007a} procedure aka \Glassoperiod~
Here, we estimate $\theta$ under the assumption that it is sparse, 
with a few number of non-zeros. Under the multivariate Gaussian modeling set up, 
$\theta_{ij} = 0$ (for $i \neq j$) is equivalent to the conditional independence of $x_{i}$ and $x_{j}$ given the remaining variables, where, 
$\mathbf{x}=(x_{1}, \ldots, x_{p})\sim \textbf{N}(0, \Sigma)$. 
\Glasso~minimizes the negative log-likelihood subject to a
penalty on the $\ell_{1}$ norm of the entries of the precision matrix $\theta$. This leads to the following convex optimization problem~\citep{BV2004}: 
\begin{equation}\label{eq-glasso-1}
\mini_{\theta \in\M_+}\;\;\;\; \underbrace{-\log\det\theta + \textsf{Tr}(\theta S)}_{:=f(\theta)}  + \lambda \sum_{i, j} | \theta_{ij} |, 
\end{equation}
where, $S=\frac{1}{n} \sum_{i=1}^{n}\mm{x}_{i}\mm{x}'_{i}$ is the sample covariance matrix, $\M_+$ denotes the set of positive definite matrices and $\lambda > 0$ is a tuning parameter that controls the 
degree of regularization\footnote{As long as $\lambda>0,$ the minimum of Problem~\eqref{eq-glasso-1} is finite (see Lemma~\ref{keylem}) and there is a unique minimizer.
In some variants of Problem~\eqref{eq-glasso-1}, the diagonal entries of $\theta$ are not penalized---such an estimator 
can infact be written as a version of Problem~\eqref{eq-glasso-1}, with $ S \leftarrow S - \lambda \mathbb{I}_{p \times p}$ where, $\mathbb{I}$ is a $p \times p$ identity matrix. The minimum of this problem 
need not be finite. In this paper, however, we will consider formulation~\eqref{eq-glasso-1} where the diagonals are penalized.}. In passing, we note that the \Glasso criterion, though motivated as a regularized 
negative log-likelihood problem, can be used more generally for any positive semidefinite (PSD) matrix $S$.

In modern statistical applications we frequently encounter examples where Problem~\eqref{eq-glasso-1} needs to be solved for $p$ of the order of several thousands. 
Thus there is an urgent need to develop fast and scalable algorithms for Problem~\eqref{eq-glasso-1}.
  In this vein, the past several years have witnessed a flurry of interesting work in developing fast and efficient solvers for Problem~\eqref{eq-glasso-1}. 
  We present a very brief overview of the main approaches used for the \Glasso problem, with further additional details presented in the Appendix, Section~\ref{related-work}.
  A representative list of  popular algorithmic approaches include (a) block (where, each row/column is a block) coordinate methods~\citep{BGA2008,FHT2007,mazumder2012graphical};
(b) proximal gradient descent type methods~\citep{BGA2008,Lu:09,rolfs2012iterative}; (c) methods based on Alternating Direction Method of Multipliers~\citep{scheinberg2010sparse,boyd-admm1,yuan2012alternating}; (d) specialized interior point methods~\citep{Li:10}; and
(e) proximal Newton type methods~\citep{JMLR:v15:hsieh14a,oztoprak2012newton}. 
 All the aforementioned methods are deterministic in nature. Precise (global) computational guarantees are available for some of them. It appears that most of the aforementioned computational approaches for 
Problem~\eqref{eq-glasso-1}, have a (worst-case) cost of at 
least $O(p^3)$ or possibly larger---this is perhaps not surprising, since for $\lambda=0$, finding the MLE requires 
computing $S^{-1}$ (assuming that the inverse exists), with cost $O(p^3)$.
Many of the state-of-the art algorithms for \Glasso~\citep{JMLR:v15:hsieh14a,FHT2007} (for example) make clever use of the fact that solutions to Problem~\eqref{eq-glasso-1} are sparse, for large values of $\lambda$.
Another important structural property of \textsc{Glasso}, that enables the scalable computation of Problem~\eqref{eq-glasso-1} is the exact thresholding property~\citep{mazumder2012exact,witten2011new}. The method is particularly useful for large values of $\lambda$, whenever the solution to the \Glasso problem decomposes into smaller connected components; and becomes less effective when the solution to the \Glasso problem is not sufficiently sparse. 
In short, computing solutions to Problem~\eqref{eq-glasso-1} become increasingly difficult as soon as $p$ exceeds a few thousand.

All existing algorithms proposed for \textsc{Glasso}, to the best of our knowledge, are deterministic batch algorithms. To improve the computational scalability of Problem~\eqref{eq-glasso-1},
we consider a different approach in this paper. Our approach, uses for the first time, ideas from stochastic convex optimization for the \Glasso problem. 
%
%
\subsubsection*{From sparse to dense regularization}
We consider another traditionally important regularization scheme, given via the following optimization problem:
\begin{equation}\label{eq-ridge}
\mini_{\theta \in\M_+}\;\;\;\; -\log\det\theta + \textsf{Tr}(\theta S)  + \frac{\lambda}{2} \sum_{i, j} \theta^2_{ij},
\end{equation}
for some value of $\lambda>0$. This can be thought of as the ridge regularized version\footnote{Note that some authors~\citep{warton2008penalized} refer to a different problem as a ridge regression problem, namely one where one penalizes the trace of $\theta$ instead of the frobenius norm of $\theta$. 
Such regularizers are often used in the context of regularized
discriminant analysis~\citep{Fr89_mod,hastie1995penalized}. However, in 
this paper we will denote Problem~\eqref{eq-ridge} as the ridge regularized version of the Gaussian maximum likelihood problem.} of Problem~\eqref{eq-glasso-1}. 
We will see in Section~\ref{sec:ridge-regu} that Problem~\eqref{eq-ridge} admits an analytic solution which requires computing the eigen-decomposition of $S$, albeit difficult when both $n$ and $p$ are large. Note that many of the tricks employed by modern solvers for \textsc{Glasso}, anticipating a sparse solution, no longer apply here.
The stochastic convex optimization framework that we develop in this paper also applies to Problem~\eqref{eq-ridge}, thereby enabling the computation of near-optimal solutions 
for problem-sizes where the exact solution becomes impractical to compute.

In this paper, we study a general version of Problems~\eqref{eq-glasso-1} and~\eqref{eq-ridge} by taking a convex combination of the ridge and $\ell_{1}$ penalties:
\begin{equation}\label{eq-glasso-enet-1}
\mini_{\theta \in\M_+} \;\;\; \underbrace{-\log\det\theta + \textsf{Tr}(\theta S)}_{:=f(\theta)}  +  \underbrace{\sum_{i,j} \left(\alpha \lambda_{} | \theta_{ij} | + \frac{(1-\alpha)}{2}\lambda_{} \theta_{ij}^2 \right)}_{:= g_{\alpha}(\theta)},
\end{equation}
with $\alpha \in [0,1]$. Following~\citet{ZH2005}, we dub the above problem as the \emph{elastic net} regularized version of the 
negative log-likelihood. Notice that for $\alpha=1$ we get \Glasso and $\alpha=0$ corresponds to Problem~\eqref{eq-ridge}.
We propose a novel, scalable framework for computing near-optimal solutions to Problem~\eqref{eq-glasso-enet-1} via techniques in 
stochastic convex optimization.

\subsection{Organization of the paper} The remainder of the paper is organized as follows. 
Section~\ref{sec:contri} provides an outline of the methodology and our contributions in this paper.
 We study deterministic proximal gradient algorithms in Section \ref{sec:prox:grad}. 
We present  the stochastic algorithms, proposed herein---Algorithm~\ref{algo1} and Algorithm~\ref{algo2} in Section \ref{sec:sto:prox:grad}. 
We describe the exact thresholding rule for Problem~\eqref{eq-glasso-enet-1} in Section \ref{sec:thres}. The application of the stochastic algorithm (Algorithm~\ref{algo1}) to the ridge regularized problem (Problem~\ref{eq-ridge}) is presented in Section~\ref{sec:ridge-regu}.
 We present some numerical results that illustrate our theory in Section \ref{sec:numerics}. 
 The proofs are collected in Section \ref{sec:proofs}, and some additional material are presented in the appendix.


\section{Outline of the paper and our contributions}\label{sec:contri}

\subsubsection*{Deterministic Algorithms} The starting point of our analysis, is the study  
of a (deterministic) proximal gradient (\cite{nest-07-new,fista-09, becker2011templates, parikh:boyd:2013}) algorithm (Algorithm~\ref{algo0}) for solving Problem~\eqref{eq-glasso-enet-1}.
A direct application of the proximal gradient algorithm~(\cite{nest-07-new,fista-09}, for example) to Problem~\eqref{eq-glasso-enet-1} has some issues. 
Firstly, the basic assumption of Lipschitz continuity of the gradient $\nabla f(\theta)$, demanded by the proximal gradient algorithm, is not satisfied here. Secondly, 
the proximal operator associated with Problem~\eqref{eq-glasso-enet-1} is difficult to compute, as it involves minimizing an $\ell_{1}$ regularized quadratic function  over the cone $\M_+$.  
We show that these hurdles may be overcome by controlling the step-size. Loosely speaking, we also establish that $\nabla f(\theta)$ satisfies a Lipschitz condition
(and $f(\theta)$ satisfies a strong convexity condition) across the iterations of the algorithm---a notion that we make precise in Section~\ref{sec:prox:grad}. Using these key aspects of our algorithm,  
we derive a global linear convergence rate of Algorithm~1, even though the objective function is not strongly convex on the whole feasible set $\M_+$. Furthermore, the algorithm has an appealing convergence behavior that we highlight:  its convergence rate is dictated by the condition number\footnote{defined as the ratio of the largest eigenvalue over the smallest eigenvalue} of $\thetaen$,
a solution to Problem~\eqref{eq-glasso-enet-1}. For a given accuracy $\deltamod>0$, our analysis implies that Algorithm~\ref{algo0} has a computational cost complexity of $O\left(p^3\textsf{cond}(\thetaen)^2\log(\deltamod^{-1})\right)$ to reach a $\deltamod$-accurate solution, where
$\textsf{cond}(\thetaen)$ is the condition number  of $\thetaen$. 
 The computational bottleneck of the algorithm is the evaluation of  the gradient of the smooth component at every iteration, which in this problem  is  
$\nabla f(\theta) = -\theta^{-1} + S$. Computing the gradient requires performing a matrix inversion, an operation that scales with $p$ as $O(p^3)$---we refer the reader to Figure~\ref{fig-inv-chol-evd-1} for an idea about the scalability behavior of direct dense matrix inversion for a $p \times p$ matrix, for different sizes of $p$.

Proximal gradient descent methods on the primal of the \Glasso problem has been studied by~\cite{rolfs2012iterative}. 
Our approaches however, have some differences---our analysis hinges heavily on basic tools and techniques made available by the general theory of 
proximal methods; and we analyze a generalized version: Problem~\eqref{eq-glasso-enet-1}. The main motivation behind our analysis of Algorithm~1 is that it 
lays the foundation for the stochastic algorithms, our primary object of study in this paper.

\subsubsection*{Stochastic Algorithms} For large values of $p$ (larger than a few thousand), Algorithm~1 slows down considerably, due to repeated computation of the inverse: $\theta^{-1}$ (See also Figure~\ref{fig-inv-chol-evd-1})
across the proximal gradient iterations.
Even if the matrix $\theta$ is sparse and sparse numerical linear algebra methods are used for computing $\theta^{-1}$, the computational cost depends quite heavily upon the sparsity pattern of $\theta$ and the re-ordering algorithm used to reduce fill-ins; and need not be robust\footnote{In fact, in our experiments we observed that \textsc{Matlab} performs dense Cholesky decomposition more efficiently than sparse Cholesky decomposition, even when the matrix is sparse. This is due in part to multithreading: the dense Cholesky decomposition is automatically multithreaded in \textsc{Matlab}, but the sparse Cholesky decomposition is not. Another reason is the difficulty of finding a good re-ordering algorithm to limit fill-ins when performing sparse Cholesky decomposition.}
 across different problem instances.
 Thus, our key strategy in the paper is to develop a stochastic method that completely bypasses the exact computation (via direct matrix inversion) of the gradient $\nabla f(\theta) = S-\theta^{-1}$.
We propose to draw $N_k$ samples $z_1,\ldots,z_{N_k}$ (at iteration $k$) from $\textbf{N}(0, \theta_{k-1}^{-1})$ to form a noisy estimate $S-N_k^{-1}\sum_{k=1}^{N_k} z_iz_i'$ of the gradient $S-\theta_{k-1}^{-1}$.
This scheme forms the main workhorse of our stochastic proximal gradient algorithm, which we call Algorithm~\ref{algo1}. 
 
\begin{figure}[h!]
\centering
\scalebox{0.92}[.8]{\begin{tabular}{c c c}
& & (Zoomed) \\
\rotatebox{90}{ \hspace{1.5cm} \sf {\small Time (in seconds) }} & \includegraphics[width=0.5\textwidth,  height=0.3\textheight, trim = .6cm 1.2cm 0cm 1.5cm, clip = true ]{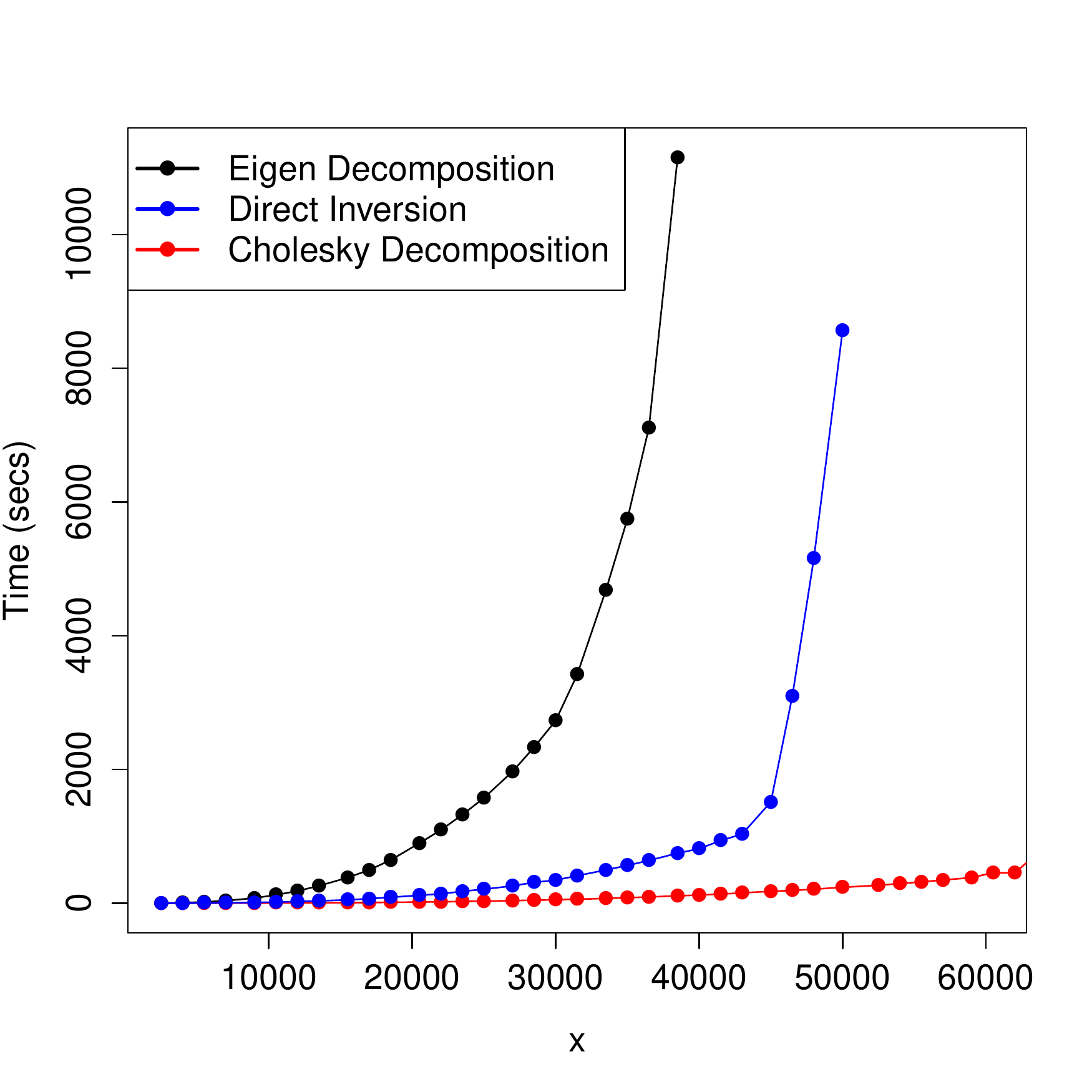} &
\includegraphics[width=0.5\textwidth, height=0.3\textheight, trim = .6cm 1.2cm 0cm 1.5cm, clip = true ]{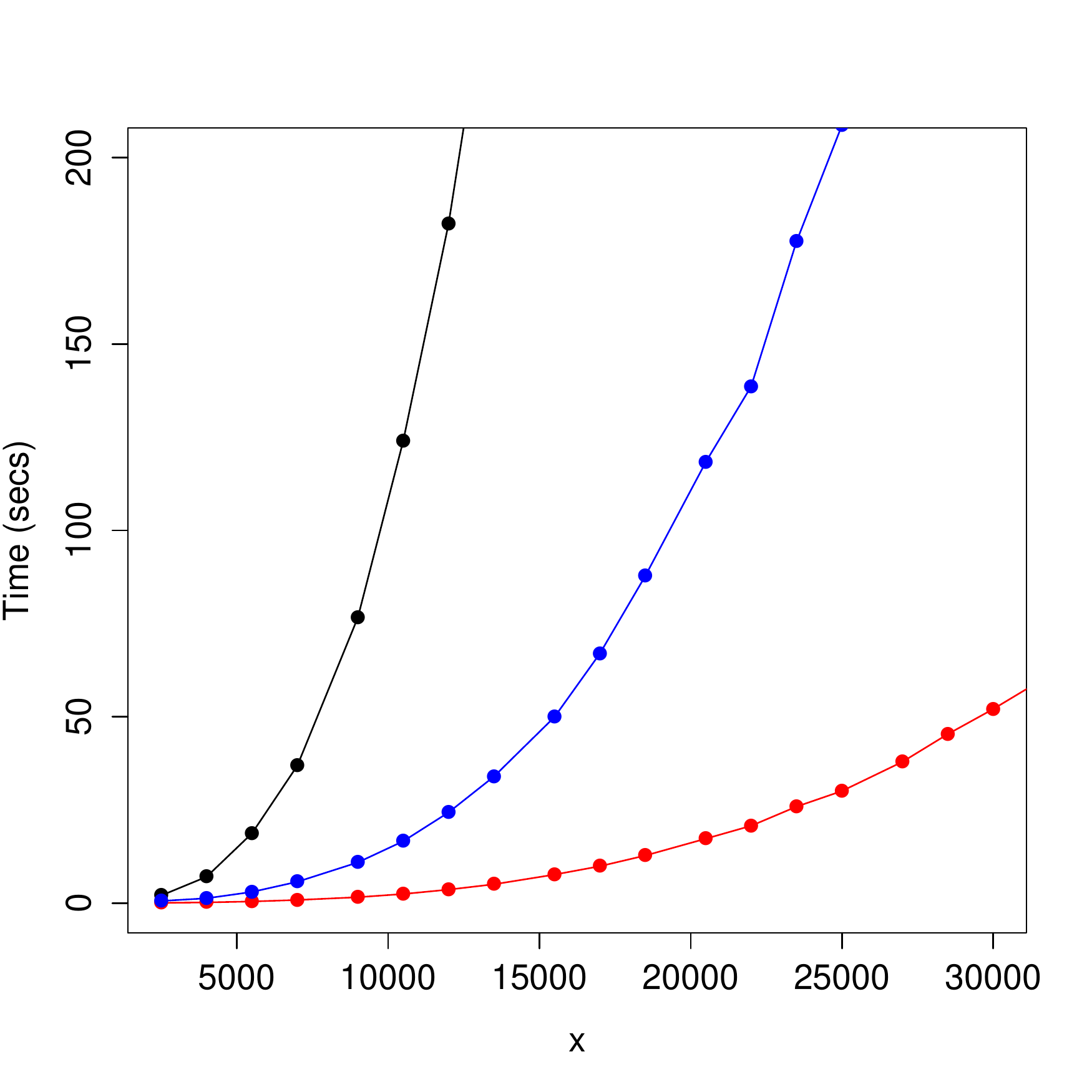} \\
 & \sf $p$   &  \sf $p$ \\
\end{tabular}}
\caption{{\small{Figure showing the times in seconds to perform a direct eigen-decomposition, inversion and Cholesky decomposition using dense direct numerical linear algebra methods, for real symmetric matrices
with size of upto $p=65,000$. Eigen decompositions and matrix inversions are less memory friendly, when compared to Cholesky decompositions for large problem sizes.
The timings displayed in the graphs support the practical feasibility of using Cholesky decomposition methods for large matrices---a main workhorse for the stochastic optimization algorithms proposed in the paper.
[Right panel] displays a zoomed in version of the left panel plot, showing that Cholesky decompositions are significantly faster than inversion and eigen-decomposition methods even for smaller problems $p \leq 30,000$.
The tail of the direct inversion curve on the left deviates from the $O(p^3)$ trend because the storage requirement has exceeded the capacity of main memory. Thus, the extra time is consumed by the slower virtual memory access. [The matrices used here were sparse with proportion of non-zeros $10/p$, positive definite with the reciprocal of the condition number given by $0.2$---we used the \textsc{Matlab} function {\texttt{sprandsym}} to generate the matrices.]} }}\label{fig-inv-chol-evd-1}
\end{figure}

 Stochastic optimization algorithms based on noisy estimates of the gradient have a long history that goes back to the pioneering works of~\cite{robbinsMonro51,kieferwolfowitz52}. 
 As datasets encountered by statisticians in the modern day grow larger and the optimization problems associated with statistical estimation tasks become increasingly challenging,
 the importance of stochastic algorithms to deliver scalable solvers is being progressively recognized in recent years. See for instance, the recent works in the optimization and machine learning communities (\cite{bertsekas:11,duchietal12, shalev-shwartz:zhang13,konecny:richtarick13,xiao:zhang14,atchade:etal14}, and the references therein). We note however, that our stochastic optimization formulation of Problem~(\ref{eq-glasso-enet-1}) differs from the usual stochastic optimization problem (for instance as in \cite{bertsekas:11}) which solves problems of the form 
 
\begin{equation}\label{sto:probl}
\mini_{\theta} \;\;\; \int f(\theta;x)\pi(\rmd x) + g(\theta),\end{equation}
for an intractable integral $\int  f(\theta;x)\pi(\rmd x)$, where, the map $\theta\mapsto  f(\theta;x)$ is smooth, and $g$ is possibly non-smooth. 
A special instance of~\eqref{sto:probl} is when
 $\pi$ is a discrete probability distribution over a very large set, making the integral $\int  f(\theta;x)\pi(\rmd x)=\frac{1}{N}\sum_{i=1}^N f(\theta;x_i)$ a large sum and difficult to work with.
We make the following remarks that highlight the differences between our approach and generic approaches for Problem~\eqref{sto:probl}:
\begin{itemize}
\item  Problem~\eqref{eq-glasso-enet-1} does not admit a straightforward representation of the form (\ref{sto:probl}). 
\item The gradient $\nabla f(\theta)=S-\theta^{-1}$ has the integral representation $S-\int xx'\pi_\theta(\rmd x)$, where $\pi_\theta$ is the density of $\textbf{N}(0, \theta^{-1})$, which depends on $\theta$ --- a distinctive feature that sets our stochastic optimization framework apart from Problem~\eqref{sto:probl}.  
\item Last, but not least, the gradient map $\theta \mapsto \nabla f(\theta)$ is not Lipschitz continuous on $\M_+$, the feasible set of Problem~\eqref{eq-glasso-enet-1}.
\end{itemize}
A main contribution of our paper is to address the above challenges in the context of the stochastic optimization framework being proposed herein. 
In fact, our stochastic optimization framework is more in sync with the Robbins-Monro algorithm (\cite{robbinsMonro51}) and can be viewed as a large-scale and non-smooth variant of the Robbins-Monro algorithm, along the lines of \cite{atchade:etal14}. Note however, that the theory of \cite{atchade:etal14}  cannot be directly applied here, as it requires the classical Lipschitz-continuity assumption of the smooth component of the objective function, and the ability to compute the proximal map  of the non-smooth component. As explained above, these properties are not readily available in our case.

The main cost of Algorithm~2 lies with generating multivariate Gaussian random variables from $\textbf{N}(0,\theta^{-1})$. A given iteration of Algorithm~\ref{algo1} is more cost-effective than an iteration of the deterministic algorithm, if the Monte Carlo sample size used in that iteration is smaller than $p$. This is because the cost of approximating $\theta^{-1}$ using $p$ random samples from $\textbf{N}(0,\theta^{-1})$ is similar to the cost of computing 
 $\theta^{-1}$ by direct matrix inversion. We show that with an appropriate choice of the Monte Carlo batch size sequence $\{N_k\}$ (see Section \ref{sec:cost-chol-1} for details), Algorithm~\ref{algo1} reaches a solution with accuracy $\deltamod$, before the Monte Carlo sample size becomes larger than $p$ if $p\geq \textsf{cond}(\thetaen)^2\deltamod^{-1}$. This result implies that  Algorithm~\ref{algo1} is more cost-effective than Algorithm~\ref{algo0} in finding $\deltamod$-accurate solutions in cases when $p$ is large, the solution $\thetaen$ is well-conditioned, and we seek a low-accuracy approximation of $\thetaen$. The total cost of Algorithm~\ref{algo1} is then $O\left(p^3\textsf{cond}(\thetaen)^2\log(\deltamod^{-1})\right)$. While on the surface, the cost looks similar to Algorithm~1 which performs a direct matrix inversion at every iteration, 
 the constant involved in the big-O notation favors Algorithm~2 ---see for example, Figure~\ref{fig-inv-chol-evd-1} showing the differences in computation times between a dense Cholesky decomposition and a direct dense matrix inversion.
This is further substantiated in our numerical experiments (Section~\ref{sec:numerics}) where we do systematic comparisons between Algorithms~1 and 2.

A deeper investigation of our stochastic optimization scheme (Algorithm~\ref{algo1}) outlined above, reveals the following. 
At each iteration $k$, all the random variables (samples) used to estimate $\theta_{k-1}^{-1}$ are discarded, and new random variables are generated to 
approximate $\theta_k^{-1}$. We thus ask, is there a modified algorithm that makes clever use of the information associated with an approximate 
$\theta_{k-1}^{-1}$ to approximate $\theta_{k}^{-1}$? In this vein, 
we propose a new algorithm: Algorithm~\ref{algo2} which recycles previously generated samples.
Algorithm~\ref{algo2} has a per-iteration cost of $O(Np^3)$ when a Cholesky factorization  is used to generate the Gaussian random variables, and where $N$ is the Monte Carlo batch-size. The behavior of the algorithm is more complex, and thus developing a rigorous convergence guarantee with associated computational guarantees analogous to Algorithm~\ref{algo1} is beyond the 
scope of the current paper.  We however, present some global convergence results on the algorithm. 
In particular, we show that when the sequence produced by Algorithm~\ref{algo2} converges, it necessarily converges to the solution of Problem~\eqref{eq-glasso-enet-1}. 

\subsubsection*{Dense problems} We emphasize that a sizable component of our work relies on the speed and efficiency of modern dense numerical linear algebra methods for scalability, 
and thus our approach is relatively agnostic to the sparsity level of $\hat{\theta}$, a solution to Problem~\eqref{eq-glasso-enet-1}. 
In other words, our approach adapts to Problem~\eqref{eq-ridge} for large $n$ and $p$, a problem which is perhaps not favorable for several current specialized implementations 
for Problem~\eqref{eq-glasso-1}.

\subsubsection*{Exact covariance thresholding} We also extend the exact thresholding rule \citep{mazumder2012exact} originally proposed for the \Glasso problem, to the more general case of Problem~\eqref{eq-glasso-enet-1}. Our result established herein, 
implies that the connected components of the graph $(1(|s_{ij}| > \lambda \alpha))$ are \emph{exactly} equal to the connected components of  the graph induced by the non-zeros of $\hat{\theta}$, a solution to Problem~\eqref{eq-glasso-enet-1}.
This can certainly be used as a wrapper around any algorithm to solve Problem~\eqref{eq-glasso-enet-1}; and leads to dramatic performance gains whenever the size of the largest connected component of 
$\left(1\left(|s_{ij}| > \lambda \alpha\right)\right)$  is sufficiently smaller than $p$.

We note that developing the fastest algorithmic implementation for Problem~\eqref{eq-glasso-enet-1} or its special case, \textsc{Glasso}, is neither the intent nor focus of this paper.
 We view our work 
as one that proposes a new framework based on stochastic optimization that enables the \emph{scalable computation} for the general class of Problems~\eqref{eq-glasso-enet-1}, across a wide range of 
the regularization parameters. The scalability properties of our proposal seem to be favorable over deterministic batch methods and in particular, proximal gradient descent methods tailored for Problem~\eqref{eq-glasso-enet-1}.

 \subsection{Notation}\label{subsec:notations}
Throughout the paper, the regularization parameters $\lambda_{}$ and $\alpha\in (0,1]$, appearing in Problem (\ref{eq-glasso-enet-1}) are assumed fixed and given.  Let $\M$ denote the set of $p\times p$ symmetric matrices with inner product $\pscal{A}{B}=\textsf{Tr}(A'B)$ and the Frobenius norm $\normfro{A}\eqdef \sqrt{\pscal{A}{A}}$. $\M_+$ denotes the set of positive definite elements of $\M$. Let $f$ be the  function $\M\to (0,\infty]$ defined by
$$
f(\theta) = \begin{cases} -\log\det\theta + \textsf{Tr}(\theta S) \;\ & \text{if $\theta\in\M_+$}\\
+\infty \;\; & \mbox{if}\;\;\theta\in\M\setminus\M_+.
\end{cases}
$$

We shall write the regularization term in Problem~(\ref{eq-glasso-enet-1}) as
\[g_\alpha(\theta)\eqdef \sum_{ij} \left(\alpha \lambda_{} | \theta_{ij} | + \frac{(1-\alpha)}{2}\lambda_{} \theta_{ij}^2 \right),\;\;\]
and 
\begin{equation}\label{defn-phi-obj}
\phi_\alpha(\theta)\eqdef f(\theta) + g_\alpha(\theta),\;\;\theta\in\M .
\end{equation}
For a matrix $A\in\M$, $\|A\|_2$ denotes the spectral norm of $A$, $\lambda_{\textsf{min}}(A)$ (respectively $\lambda_{\textsf{max}}(A)$) denotes the smallest (respectively, the largest) eigenvalue of $A$, and $\|A\|_1\eqdef \sum_{i,j}|A_{ij}|$. For a subset $\D\subseteq\M$,  $\iota_{\D}$ denotes the indicator function of $\D$, i.e.
\[\iota_{D}(u)\eqdef \left\{\begin{array}{cl} 0 & \mbox{ if }u\in\D\\ +\infty & \mbox{ otherwise. }\end{array}\right.\]
For $\theta\in\M_+$, and $\gamma>0$, we denote the proximal operator associated with Problem~(\ref{eq-glasso-enet-1}) as
\begin{equation}\label{idealTmap}
 \bar T_\gamma(\theta;\alpha) \eqdef \argmin_{u\in\M_+} \left\{g_\alpha(u)+ \frac{1}{2\gamma}\normfro{u-\theta +\gamma(S-\theta^{-1})}^2\right\}.\end{equation}
 
For $0<\epsilonmod\leq \kappamod$, we define
\[\M_+(\epsilonmod,\kappamod)\eqdef\left\{\theta\in\M_+:\;\lambda_{\textsf{min}}(\theta)\geq \epsilonmod,\;\mbox{ and }\; \lambda_{\textsf{max}}(\theta)\leq \kappamod\right\}.\]
%
%
%
%
%
%
%


\section{A proximal gradient algorithm for Problem~\eqref{eq-glasso-enet-1}}\label{sec:prox:grad}
We begin this section with a brief review of proximal gradient algorithms, following~\cite{nest-07-new}, which concerns the minimization of the following generic convex optimization problem:
\begin{equation}\label{gen-form-prox-grad}
\min_{\omega \in \Omega} \;\;  \left \{\bar{\phi}(\omega) \eqdef \bar{f}(\omega) + \bar{g}(\omega) \right \},  
\end{equation}
where, $\Omega$ is a convex subset of a Euclidean space with norm $\|\cdot\|$; $\bar{g}(\cdot)$  is a closed convex function and $\bar{f}(\cdot)$ is convex, smooth on $\Omega$ satisfying:
\begin{equation}\label{lip-cond-1}
\| \nabla \bar{f}(\omega) - \nabla \bar{f}(\omega') \| \leq \bar{L}\| \omega - \omega'\|,
\end{equation}
for $\omega, \omega' \in \Omega$ and $0 < \bar{L} < \infty$. The main ingredient in proximal gradient descent methods is the efficient computation of the proximal-operator (``prox-operator'' for short), given by:
\begin{equation}\label{prox-map-gen-1}
\bar{T}_{\gamma}( \bar{\omega}) \eqdef \argmin_{ \omega \in \Omega} \;\; \left\| \omega - \left(\bar{\omega} - \gamma \nabla f(\bar{\omega}) \right) \right\|^2 +  \bar{g}(\omega),
 \end{equation}
for some choice of $0<\gamma \leq 1/\bar L$. The following simple recursive rule:
$$ \omega_{k+1} =  \bar{T}_{\gamma}( \omega_{k} ),  \quad k \geq 1,$$
for some initial choice of $\omega_{1} \in \Omega$ and $\gamma = 1/\bar{L},$ then leads to a solution of Problem~\eqref{gen-form-prox-grad} (See for example,~\cite{nest-07-new}).

Problem~\eqref{eq-glasso-enet-1} has striking similarities to an optimization problem of the form~\eqref{gen-form-prox-grad}, with 
$\bar{f}(\cdot)=f(\cdot)$, $\Omega = \M_{+}$ endowed with the Frobenius norm, $\bar{g}(\cdot) = g_\alpha(\cdot)$, and with $\bar T_\gamma$ given by (\ref{idealTmap}).
However, the use of the proximal gradient algorithm for Problem~\eqref{eq-glasso-enet-1} presents some immediate challenges since:
\begin{itemize}
\item The gradient of the smooth component, namely, $ \nabla f(\theta) = - \theta^{-1} + S$ is not Lipschitz on the entire domain $\M_+$ (as required in (\ref{lip-cond-1})), due to the unboundedness of the map  
$\theta \mapsto \theta^{-1}$.  

\item The corresponding proximal map $\bar T_\gamma(\cdot\; ; \alpha)$ defined in (\ref{idealTmap}) need not be simple to compute.

\end{itemize}
Our first task in this paper, is to show how each of the above problems can be alleviated. 
We note that~\cite{rolfs2012iterative} also analyze a proximal gradient descent algorithm for the case $\alpha =1$. 
We present here a self-contained analysis:  our proofs have some differences with that of~\cite{rolfs2012iterative}; 
and lays the foundation for the stochastic optimization scheme that we analyze subsequently.
Loosely speaking, we will show that even if the function $f(\theta)$ does not have Lipschitz continuous gradient on the \emph{entire} feasible set $\M_{+}$, it \emph{does} satisfy~\eqref{lip-cond-1}
across the iterations of the proximal gradient algorithm.
In addition, we demonstrate that by appropriately choosing the step-size $\gamma$, the proximal map $\bar T_\gamma(\cdot\;;\alpha)$ can be computed by ``dropping'' the constraint $\theta \in \M_{+}$.
We formalize the above in the following discussion.

 For $\alpha\in[0,1]$, $\gamma>0$, and $\theta\in\M$ (i.e., the set of $p\times p$ symmetric matrices), the proximal operator associated with the function $g_\alpha$ is defined as 
$$\Prox_\gamma(\theta;\alpha) \eqdef  \argmin_{u\in \M}\left\{ g_\alpha(u) + \frac{1}{2\gamma}\normfro{u-\theta}^2\right\}.$$
This operator has a very simple form. It is a matrix whose $(i,j)$th entry is given by:

\begin{equation}\label{prox:formula}
\left(\Prox_\gamma(\theta;\alpha)\right)_{ij}=\left\{\begin{myarray}[1.2]{lc} 0 & \mbox{ if } |\theta_{ij}|<  \alpha\lambda_{} \gamma\\
\frac{\theta_{ij}-\alpha\lambda_{}\gamma}{1+(1-\alpha)\lambda_{}\gamma} & \mbox{ if } \theta_{ij}\geq \alpha\lambda_{}\gamma \\ \frac{\theta_{ij}+\alpha\lambda_{}\gamma}{1+(1-\alpha)\lambda_{}\gamma} & \mbox{ if } \theta_{ij}\leq  -\alpha\lambda_{}\gamma\, .\end{myarray}\right.
\end{equation}

For $\gamma>0$, and $\theta\in\M_+$, we consider a seemingly minor modification of the operator~\eqref{idealTmap}, given by:
\begin{equation}\label{Tmap}
\begin{myarray}[1.2]{ccl}
T_\gamma(\theta;\alpha)&\eqdef& \argmin\limits_{u\in\M} \left\{g_\alpha(u)+ \frac{1}{2\gamma}\normfro{u-\theta +\gamma(S-\theta^{-1})}^2\right\}\\
&=&\Prox_\gamma\left(\theta-\gamma(S-\theta^{-1});\alpha\right).
\end{myarray}
\end{equation}
Compared to (\ref{idealTmap}), one can notice that in (\ref{Tmap}) the positive definiteness constraint is relaxed. It follows from (\ref{prox:formula}) that $T_\gamma(\theta;\alpha)$ is straightforward to compute. Notice that if $T_\gamma(\theta;\alpha)$ is positive definite, then $T_\gamma(\theta;\alpha) = \bar T_\gamma(\theta;\alpha)$. We will show that if $\gamma$ is not too large then indeed $T_\gamma(\theta;\alpha)=\bar T_\gamma(\theta;\alpha)$ for all $\theta$ in certain subsets of $\M_+$.

At the very onset, we present a result which provides bounds on the spectrum of $\hat{\theta}$, a solution to Problem~\eqref{eq-glasso-enet-1}.
The following lemma can be considered as a generalization of the result of~\cite{Lu:09} obtained for the \Glasso problem (with $\alpha=1$). Let us define the following quantities: $\lambda_{1} \eqdef  \alpha \lambda$, $\lambda_{2} \eqdef  (1-\alpha)\lambda/2$, $\mu\eqdef  \| S\|_{2} + \lambda_{1} p$, 
\begin{equation}\label{for-thm-st-1}
\begin{aligned}
\ell_{\star} \eqdef &  \begin{cases} 
\frac{-\mu + \sqrt{\mu^2 + 8 \lambda_{2}}}{4 \lambda_{2}} &\text{if }\alpha\in [0,1) \\
\frac{1}{\mu} & \mbox{if } \alpha=1,
\end{cases}\\
U_{1} \eqdef & \frac{1}{\lambda_{1}} \left(p - \ell_{\star} \tr(S) - 2p\lambda_{2} \ell_{\star}^2 \right)\\
c(t) \eqdef & \frac{1}{\lambda_{1}(1 - t)} \left(  \lambda_{1} \| \theta(t) \|_{1} - t \lambda_{1} \tr(\theta(t)) + \lambda_{2} \normfro{\theta(t)}^2 \right) -  \frac{\lambda_{2}  \ell_{\star}^2p }{\lambda_{1}(1 - t)}\\
U_{2} \eqdef & \inf_{t \in (0,1)} c(t),
\end{aligned}
\end{equation}
where, we take $t\in(0,1)$ and $\theta(t) \eqdef  (S+t\lambda_1 I)^{-1}$.

\begin{lemma}\label{lem:spec-bounds-1} If $\hat{\theta}$ is a solution to Problem~\eqref{eq-glasso-enet-1} with $\lambda>0$, then $\hat{\theta}$ is unique, and $\hat\theta \in\M_+(\ell_\star,\psi_{UB})$, with $\psi_{UB} = \min \{ U_{1}, U_{2} \},$ where, $U_{1},U_{2}$ are as defined in~\eqref{for-thm-st-1}. In other words, we have the following bounds on the spectrum of $\hat{\theta}$
$$ \lambda_{\min}(\hat{\theta}) \geq \ell_{\star}, \;\;\;\;\lambda_{\max}(\hat{\theta}) \leq \psi_{UB}. $$

\begin{proof}
The proof is presented in Section~\ref{proof-lem:spec-bounds-1}.
\end{proof}
\end{lemma}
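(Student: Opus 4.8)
The plan is to read off everything from the first-order optimality condition for $\hat\theta$, testing that identity against cleverly chosen matrices. First, uniqueness: since $-\log\det$ has positive definite Hessian on $\M_+$, the smooth part $f$ is strictly convex, hence so is $\phi_\alpha=f+g_\alpha$; combined with coercivity (the $-\log\det$ barrier forces $\phi_\alpha\to+\infty$ at $\partial\M_+$, while $g_\alpha\to\infty$ as $\normfro{\theta}\to\infty$ because $\lambda>0$), this gives a unique minimizer $\hat\theta\in\M_+$. As $\hat\theta$ lies in the open set $\M_+$, optimality is $0\in\partial\phi_\alpha(\hat\theta)$: there is a symmetric matrix $Z$ with $\abs{Z_{ij}}\le 1$ and $Z_{ij}=\sign(\hat\theta_{ij})$ whenever $\hat\theta_{ij}\neq 0$, such that
\[ \hat\theta^{-1}=S+\lambda_1 Z+2\lambda_2\hat\theta . \]

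For the lower bound I would pair this identity with a unit bottom eigenvector $v$ of $\hat\theta$ (a top eigenvector of $\hat\theta^{-1}$). With $\ell=\lambda_{\min}(\hat\theta)$ this yields $1/\ell=v'Sv+\lambda_1 v'Zv+2\lambda_2\ell$. Bounding $v'Sv\le\|S\|_2$ and $v'Zv\le\|Z\|_2\le\normfro{Z}\le p$ gives $1/\ell\le\mu+2\lambda_2\ell$, i.e. $2\lambda_2\ell^2+\mu\ell-1\ge 0$. Since $\ell>0$, solving this quadratic (and treating $\alpha=1$, where $\lambda_2=0$, separately via $\mu\ell\ge 1$) produces exactly $\ell\ge\ell_\star$.

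For the first upper bound I would instead take the Frobenius inner product of the identity with $\hat\theta$. Using $\pscal{\hat\theta}{\hat\theta^{-1}}=p$, $\pscal{\hat\theta}{Z}=\sum_{ij}\hat\theta_{ij}Z_{ij}=\|\hat\theta\|_1$, and $\pscal{\hat\theta}{\hat\theta}=\normfro{\hat\theta}^2$, I obtain the scalar identity $p=\tr(\hat\theta S)+\lambda_1\|\hat\theta\|_1+2\lambda_2\normfro{\hat\theta}^2$. Then $\lambda_{\max}(\hat\theta)\le\|\hat\theta\|_2\le\|\hat\theta\|_1$, and substituting the lower bounds $\tr(\hat\theta S)\ge\ell_\star\tr(S)$ and $\normfro{\hat\theta}^2\ge p\ell_\star^2$ (both consequences of $\lambda_{\min}(\hat\theta)\ge\ell_\star$) gives $\lambda_{\max}(\hat\theta)\le U_1$.

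The bound $U_2$ is the most delicate step and is where the auxiliary matrix $\theta(t)=(S+t\lambda_1 I)^{-1}$ enters, its virtue being that $\theta(t)^{-1}=S+t\lambda_1 I$ is explicit. I would exploit monotonicity of $\nabla f(\theta)=S-\theta^{-1}$, namely $\pscal{\theta(t)^{-1}-\hat\theta^{-1}}{\hat\theta-\theta(t)}\ge 0$ for $t\in(0,1)$. Substituting $\theta(t)^{-1}$ and the optimality identity turns the left factor into $\lambda_1(tI-Z)-2\lambda_2\hat\theta$; expanding and then using $\pscal{Z}{\hat\theta}=\|\hat\theta\|_1\ge\tr(\hat\theta)$, $\pscal{Z}{\theta(t)}\le\|\theta(t)\|_1$, the elementary inequality $2\pscal{\hat\theta}{\theta(t)}\le\normfro{\hat\theta}^2+\normfro{\theta(t)}^2$, and finally $\normfro{\hat\theta}^2\ge p\ell_\star^2$, everything collapses to $\lambda_1(1-t)\tr(\hat\theta)\le\lambda_1(1-t)c(t)$. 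Dividing by $\lambda_1(1-t)>0$, using $\lambda_{\max}(\hat\theta)\le\tr(\hat\theta)$, and taking the infimum over $t\in(0,1)$ gives $\lambda_{\max}(\hat\theta)\le U_2$, whence $\lambda_{\max}(\hat\theta)\le\min\{U_1,U_2\}=\psi_{UB}$. The genuine obstacle here is organizing precisely this chain: selecting the family $\theta(t)$, and routing the troublesome cross term $\pscal{\hat\theta}{\theta(t)}$ and the gap between $\|\hat\theta\|_1$ and $\tr(\hat\theta)$ through the right inequalities so that all $\hat\theta$-dependence reduces cleanly to $\tr(\hat\theta)$ while leaving only $\theta(t)$-quantities and the constant $\lambda_2 p\ell_\star^2$ on the right.
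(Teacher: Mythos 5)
Your proof is correct, and its skeleton matches the paper's: the stationarity identity $\hat\theta^{-1}=S+\lambda_1 Z+2\lambda_2\hat\theta$ with $\|Z\|_2\le p$, pairing the identity with $\hat\theta$ itself for $U_1$, and the auxiliary family $\theta(t)=(S+t\lambda_1 I)^{-1}$ for $U_2$. The differences are in mechanism. (i) For uniqueness, the paper argues strong convexity when $\lambda_2>0$ and cites the literature for the case $\lambda_2=0$; your strict-convexity argument for $-\log\det$ covers both cases at once (and the coercivity remark is dispensable, since existence is hypothesized in the statement). (ii) For the lower bound, the paper reads the Loewner inequality $\hat\theta^{-1}-2\lambda_2\hat\theta\preceq\mu\,I$ off all eigenvalues simultaneously, while you test the quadratic form at a bottom eigenvector of $\hat\theta$; this is the same estimate. (iii) For $U_2$, the paper adds two optimality inequalities --- $\theta(t)$ minimizes $-\log\det\theta+\pscal{S+t\lambda_1 I}{\theta}$ and $\hat\theta$ minimizes $\phi_\alpha$ --- so that the log-determinant and $\pscal{S}{\cdot}$ terms cancel; your gradient-monotonicity inequality $\pscal{\theta(t)^{-1}-\hat\theta^{-1}}{\hat\theta-\theta(t)}\ge 0$, after substituting the two first-order conditions and applying $\pscal{Z}{\theta(t)}\le\|\theta(t)\|_1$ together with $2\pscal{\hat\theta}{\theta(t)}\le\normfro{\hat\theta}^2+\normfro{\theta(t)}^2$, lands on exactly the same intermediate inequality $\lambda_1\|\hat\theta\|_1-t\lambda_1\tr(\hat\theta)+\lambda_2\normfro{\hat\theta}^2\le\lambda_1\|\theta(t)\|_1-t\lambda_1\tr(\theta(t))+\lambda_2\normfro{\theta(t)}^2$. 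From there you keep $\tr(\hat\theta)$ on the left (via $\|\hat\theta\|_1\ge\tr(\hat\theta)$) and conclude $\tr(\hat\theta)\le c(t)$, marginally weaker than the paper's $\|\hat\theta\|_1\le c(t)$, but since $\lambda_{\max}(\hat\theta)\le\tr(\hat\theta)$ it still yields $\lambda_{\max}(\hat\theta)\le U_2$ and hence the lemma. As for what each route buys: the paper's value-comparison trick needs no cross-term estimate, so it retains the stronger $\ell_1$ bound; your monotonicity route is more mechanical, using only the explicit form of $\theta(t)^{-1}$ rather than its characterization as a minimizer of an auxiliary objective.
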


We make a few remarks about the bounds in~\eqref{for-thm-st-1}.

\medskip

$\bullet$ Computing $U_{2}$ requires performing a one dimensional minimization which can be carried out quite easily. Conservative but \emph{valid} 
bounds can be obtained by replacing $U_{2}$ by evaluations of $c(\cdot)$ at some values of $t \in (0,1)$ for example: $t = \frac12$ and $t = 0+$ (provided $S$ is invertible). 

$\bullet$ Since the condition number of $\thetaen$ is $\textsf{cond}(\thetaen)=\lambda_{\textsf{max}}(\thetaen)/\lambda_{\textsf{min}}(\thetaen)$, the result above implies that $\textsf{cond}(\thetaen)\leq \psi_{UB}/\ell_\star$. We note, however, that this upper bound $\psi_{UB}/\ell_\star$ may not be an accurate estimate of  $\textsf{cond}(\thetaen)$.%


\medskip

We now present an important property (Lemma~\ref{keylem}) of the proximal gradient update step, for our problem. Towards this end, we 
define  $\nu \eqdef \lambda_{\textsf{min}}(S)-\lambda_1p$ and
 \[\psi_\star^1 \eqdef  \left\{\begin{array}{ll}\frac{-\nu +\sqrt{\nu^2+8\lambda_2}}{4\lambda_2} & \mbox{ if } \alpha\in [0,1),\\
 \frac{1}{\nu} & \mbox{ if } \alpha=1 \mbox{ and } \nu>0\\
 +\infty & \mbox{ if } \alpha=1 \mbox{ and } \nu\leq  0. \end{array}\right.\]
 It is obvious that $0<\epsilonmod_\star\leq \kappamod^1_\star\leq \infty$. We also define
 \[\psi_\star \eqdef \min\left(\psi_\star^1,  \psi_{UB} +\sqrt{p}\left(\psi_{UB}-\ell_\star\right)\right).\]

\begin{lemma}\label{keylem}
Take $\gamma\in (0, \epsilonmod_\star^2]$ and let $\{\theta_j,\;j\geq 0\}$ be a sequence such that $\theta_j = T_\gamma(\theta_{j-1};\alpha)$, for $j\geq 1$. If $\theta_{0} \in \M_+(\epsilonmod_\star, \min \{ \psi_{UB}, {\psi}^1_\star \})$, then $\theta_j\in \M_+(\ell_\star,\psi_\star)$ for all $j\geq 0$.
\end{lemma}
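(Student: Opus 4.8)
The plan is to establish the two spectral bounds separately and combine them by induction on $j$; the base case $j=0$ is immediate because $\min\{\psi_{UB},\psi^1_\star\}\le\psi_\star$ (since $\psi_{UB}\ge\ell_\star$ forces $\psi_{UB}\le\psi_{UB}+\sqrt p(\psi_{UB}-\ell_\star)$). The common starting point is the first-order optimality condition for the unconstrained prox in \eqref{Tmap}: writing $u\eqdef T_\gamma(\theta;\alpha)$, there is a symmetric $W$ with $W_{ij}\in\partial|u_{ij}|$ (so $\|W\|_2\le\normfro{W}\le p$) such that
\[ (1+2\gamma\lambda_2)\,u \;=\; \theta+\gamma\theta^{-1}-\gamma S-\gamma\lambda_1 W. \]
The one algebraic fact I would extract from \eqref{for-thm-st-1} is that $\ell_\star$ is the positive root of $2\lambda_2 t^2+\mu t-1=0$, i.e.\ $\mu=\ell_\star^{-1}-2\lambda_2\ell_\star$, and that $\psi^1_\star$ solves $2\lambda_2 t^2+\nu t-1=0$, i.e.\ $\nu=(\psi^1_\star)^{-1}-2\lambda_2\psi^1_\star$, with the conventions $\lambda_2=0$ when $\alpha=1$ and $\psi^1_\star=+\infty$ when $\alpha=1,\ \nu\le0$ (making the upper estimate below vacuous).

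The decisive observation, and the step using $\gamma\le\ell_\star^2$, concerns the self-adjoint term $\theta+\gamma\theta^{-1}$. If $\theta$ has eigenvalues in $[a,b]$ with $a=\lambda_{\min}(\theta)\ge\ell_\star\ge\sqrt\gamma$, then $\rho(t)\eqdef t+\gamma/t$ is increasing on $[\sqrt\gamma,\infty)\supseteq[a,b]$, so $\rho(a)I\preceq\theta+\gamma\theta^{-1}\preceq\rho(b)I$. Feeding the lower bound into the optimality identity together with $-\gamma S\succeq-\gamma\|S\|_2 I$ and $-\gamma\lambda_1 W\succeq-\gamma\lambda_1 p\,I$ gives $(1+2\gamma\lambda_2)\lambda_{\min}(u)\ge a+\gamma/a-\gamma\mu$; substituting $\mu=\ell_\star^{-1}-2\lambda_2\ell_\star$ reduces this exactly to $\rho(a)\ge\rho(\ell_\star)$, true since $a\ge\ell_\star$. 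Hence $\lambda_{\min}(u)\ge\ell_\star$ using only $\lambda_{\min}(\theta)\ge\ell_\star$ and $\gamma\le\ell_\star^2$, which yields the lower-bound invariance for every iterate. The symmetric computation with $\theta+\gamma\theta^{-1}\preceq\rho(b)I$, $-\gamma S\preceq-\gamma\lambda_{\min}(S)I$, $-\gamma\lambda_1 W\preceq\gamma\lambda_1 p\,I$ gives $(1+2\gamma\lambda_2)\lambda_{\max}(u)\le b+\gamma/b-\gamma\nu$, which after substituting the relation for $\nu$ becomes $\rho(b)\le\rho(\psi^1_\star)$; since $\lambda_{\max}(\theta_0)\le\psi^1_\star$, this propagates $\lambda_{\max}(\theta_j)\le\psi^1_\star$ for all $j$.

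The remaining estimate $\lambda_{\max}(\theta_j)\le\psi_{UB}+\sqrt p(\psi_{UB}-\ell_\star)$, where the $\sqrt p$ enters, I would obtain from Fej\'er monotonicity rather than the spectral recursion. On the convex set $\Cset\eqdef\{\theta\in\M:\theta\succeq\ell_\star I\}$ the gradient $\nabla f(\theta)=S-\theta^{-1}$ is monotone and, via $\theta'^{-1}-\theta^{-1}=\theta'^{-1}(\theta-\theta')\theta^{-1}$, Lipschitz with constant $1/\ell_\star^2$; since $\gamma\le\ell_\star^2$ the forward step $\theta\mapsto\theta-\gamma\nabla f(\theta)$ is nonexpansive, and as $\Prox_\gamma(\cdot;\alpha)$ is nonexpansive, so is $T_\gamma(\cdot;\alpha)$ on $\Cset$. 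By Lemma~\ref{lem:spec-bounds-1} the minimizer $\hat\theta\in\M_+(\ell_\star,\psi_{UB})\subseteq\Cset$ is a fixed point of $T_\gamma(\cdot;\alpha)$, and the lower-bound invariance keeps every $\theta_j$ in $\Cset$, so $\normfro{\theta_j-\hat\theta}\le\normfro{\theta_0-\hat\theta}$. Because $\theta_0$ and $\hat\theta$ both lie in $\M_+(\ell_\star,\psi_{UB})$ their difference satisfies $\|\theta_0-\hat\theta\|_2\le\psi_{UB}-\ell_\star$, whence $\normfro{\theta_0-\hat\theta}\le\sqrt p(\psi_{UB}-\ell_\star)$ and $\lambda_{\max}(\theta_j)\le\lambda_{\max}(\hat\theta)+\|\theta_j-\hat\theta\|_2\le\psi_{UB}+\sqrt p(\psi_{UB}-\ell_\star)$. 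Taking the minimum of the two upper estimates gives $\lambda_{\max}(\theta_j)\le\psi_\star$, which together with $\lambda_{\min}(\theta_j)\ge\ell_\star$ completes the induction.

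I expect the main obstacle to be the lower bound: the naive estimate $\theta^{-1}\succeq\lambda_{\max}(\theta)^{-1}I$ is too weak and would permit $\lambda_{\min}$ to drop below $\ell_\star$ at the boundary, so the crux is to keep the combination $\theta+\gamma\theta^{-1}$ intact and exploit $\gamma\le\ell_\star^2$ to place $[a,b]$ inside the increasing branch of $\rho$. The secondary technical point is justifying the nonexpansiveness of $T_\gamma$ on $\Cset$—in particular verifying that $\hat\theta$ is a genuine fixed point of the relaxed map $T_\gamma$ and not merely of $\bar T_\gamma$, which holds because $\hat\theta\succ0$ lies in the interior of $\M_+$ so its optimality condition coincides with the unconstrained prox-gradient fixed-point equation.
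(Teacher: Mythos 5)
Your proposal is correct and follows essentially the same route as the paper's own proof: the identical optimality-condition spectral recursion, exploiting that $t\mapsto t+\gamma/t$ is increasing on $[\sqrt\gamma,\infty)$ together with the quadratics defining $\ell_\star$ and $\psi_\star^1$, and then Fej\'er monotonicity toward $\hat\theta$ combined with Weyl's inequality to obtain the $\psi_{UB}+\sqrt{p}\,(\psi_{UB}-\ell_\star)$ estimate. The only cosmetic difference is that you justify $\normfro{\theta_j-\hat\theta}\leq\normfro{\theta_0-\hat\theta}$ via Baillon--Haddad cocoercivity of $\nabla f$ on $\{\theta\succeq\ell_\star I\}$ and nonexpansiveness of the prox (note that monotonicity plus Lipschitz continuity alone would not suffice here---cocoercivity is the needed ingredient), whereas the paper derives the same inequality from its Lemma~\ref{lem4} applied with $H=\theta_{j-1}^{-1}$ and $\psi=+\infty$.
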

\begin{proof}
See Section \ref{proof:keylem}.
\end{proof}

In the special case of \Glasso ($\alpha=1$), the results of Lemma \ref{keylem} correspond to those obtained by \cite{rolfs2012iterative}. Our proof, however, has differences since 
we rely more heavily on basic properties of proximal maps.

Lemma~\ref{keylem} shows that for appropriate choices of $\gamma>0$, the two proximal maps $T_\gamma$ and $\bar T_\gamma$ produce the identical sequences that remain 
in the set $\M_+(\epsilonmod_\star,\kappamod_\star)$. This suggests that one can solve Problem~\eqref{eq-glasso-enet-1} using the proximal operator $T_\gamma$, as the next result shows.

\begin{theorem}\label{thm1}
Fix  arbitrary $0<\epsilonmod < \kappamod<\infty$. For $k\geq 1$, let $\{\theta_{j},\; 0 \leq j  \leq k\} $ be a sequence obtained via the map $T_\gamma$: $\theta_{j+1}=T_\gamma(\theta_j;\alpha)$, for some $\gamma\in (0,\epsilonmod^2]$. Suppose that $\hat\theta,\theta_j\in\M_+(\epsilonmod,\kappamod)$, $0\leq j\leq k$.
Then
\begin{equation}\label{bound:thm1}
\normfro{\theta_k-\thetaen}^2  \leq \rho^k \normfro{\theta_0-\thetaen}^2,\;\;\mbox{and } \;\;\left\{\phi_\alpha(\theta_k)-\phi_\alpha(\hat\theta)\right\} \leq
 \frac{\normfro{\theta_0-\hat\theta}^2}{2\gamma}\min\left \{ \frac{1}{k},\rho^k\right \},\end{equation}
where $\rho= 1-\frac{\gamma}{\kappamod^2}$.
\end{theorem}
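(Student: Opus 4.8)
The plan is to exploit the hypothesis that the whole trajectory $\{\theta_j\}$ and the optimum $\hat\theta$ live in the convex set $\M_+(\epsilonmod,\kappamod)$, on which $f$ is simultaneously strongly convex and has a Lipschitz gradient; this reduces the statement to the textbook strongly-convex proximal-gradient analysis, carried out \emph{locally} on that set.

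First I would record the two curvature estimates for $f$. Its Hessian acts by $\nabla^2 f(\theta)[H]=\theta^{-1}H\theta^{-1}$, so $\pscal{H}{\nabla^2 f(\theta)[H]}=\normfro{\theta^{-1/2}H\theta^{-1/2}}^2$, whose eigenvalues over symmetric $H$ are the products $1/(\lambda_i\lambda_j)$ of reciprocal eigenvalues of $\theta$; for $\theta\in\M_+(\epsilonmod,\kappamod)$ these lie in $[\kappamod^{-2},\epsilonmod^{-2}]$. Since $\M_+(\epsilonmod,\kappamod)=\{\lambda_{\textsf{min}}\geq\epsilonmod\}\cap\{\lambda_{\textsf{max}}\leq\kappamod\}$ is convex, integrating the Hessian along segments (which stay in the set) gives, for all $\theta,\theta'\in\M_+(\epsilonmod,\kappamod)$, the descent inequality $f(\theta')\leq f(\theta)+\pscal{\nabla f(\theta)}{\theta'-\theta}+\frac{1}{2\epsilonmod^2}\normfro{\theta'-\theta}^2$ and the strong-convexity inequality $f(\theta')\geq f(\theta)+\pscal{\nabla f(\theta)}{\theta'-\theta}+\frac{1}{2\kappamod^2}\normfro{\theta'-\theta}^2$. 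As $\gamma\leq\epsilonmod^2$, the coefficient $\frac{1}{2\epsilonmod^2}$ in the first inequality may be replaced by $\frac{1}{2\gamma}$.

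Next I would analyze one prox step. Writing $\Psi_k(u)\eqdef\pscal{\nabla f(\theta_k)}{u-\theta_k}+\frac{1}{2\gamma}\normfro{u-\theta_k}^2+g_\alpha(u)$, definition~\eqref{Tmap} shows $\theta_{k+1}=T_\gamma(\theta_k;\alpha)$ is the minimizer of $\Psi_k$ over $\M$, and since $\Psi_k$ is $\gamma^{-1}$-strongly convex, $\Psi_k(\hat\theta)\geq\Psi_k(\theta_{k+1})+\frac{1}{2\gamma}\normfro{\hat\theta-\theta_{k+1}}^2$. I would bound $\Psi_k(\hat\theta)$ above using the strong-convexity inequality (absorbing the linear term into $\phi_\alpha(\hat\theta)-f(\theta_k)$) and $\Psi_k(\theta_{k+1})$ below using the descent inequality (giving $\Psi_k(\theta_{k+1})\geq\phi_\alpha(\theta_{k+1})-f(\theta_k)$). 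Cancelling $f(\theta_k)$ yields the key one-step bound
$$\phi_\alpha(\theta_{k+1})-\phi_\alpha(\hat\theta)\leq\left(\tfrac{1}{2\gamma}-\tfrac{1}{2\kappamod^2}\right)\normfro{\hat\theta-\theta_k}^2-\tfrac{1}{2\gamma}\normfro{\hat\theta-\theta_{k+1}}^2. \quad (\star)$$
From $(\star)$ everything follows. Dropping the last term and using $\phi_\alpha(\theta_{k+1})\geq\phi_\alpha(\hat\theta)$ (global minimality of $\hat\theta$ over $\M_+$, hence over $\M$ as $f\equiv+\infty$ off $\M_+$) gives $\normfro{\hat\theta-\theta_{k+1}}^2\leq\rho\,\normfro{\hat\theta-\theta_k}^2$ with $\rho=1-\gamma/\kappamod^2$; iterating gives the distance bound. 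For the geometric objective bound I would use the identity $\frac{1}{2\gamma}-\frac{1}{2\kappamod^2}=\frac{\rho}{2\gamma}$, so $(\star)$ at index $k-1$ combined with the distance bound yields $\phi_\alpha(\theta_k)-\phi_\alpha(\hat\theta)\leq\frac{\rho}{2\gamma}\normfro{\hat\theta-\theta_{k-1}}^2\leq\frac{\rho^k}{2\gamma}\normfro{\hat\theta-\theta_0}^2$. For the $1/k$ bound I would first note $\phi_\alpha(\theta_{j+1})\leq\phi_\alpha(\theta_j)$ (since $\theta_{j+1}$ minimizes $\Psi_j$ and $\Psi_j(\theta_j)=g_\alpha(\theta_j)$), then sum $(\star)$ with the $-\frac{1}{2\kappamod^2}$ term discarded over $j=0,\dots,k-1$; the right-hand side telescopes to at most $\frac{1}{2\gamma}\normfro{\hat\theta-\theta_0}^2$, and monotonicity converts the left-hand sum into $k\big(\phi_\alpha(\theta_k)-\phi_\alpha(\hat\theta)\big)$.

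The main obstacle—indeed the only delicate point—is justifying the local curvature estimates, since both the Lipschitz-gradient and strong-convexity properties of $f$ fail on all of $\M_+$. Everything therefore hinges on the standing hypothesis that the entire trajectory and $\hat\theta$ remain in $\M_+(\epsilonmod,\kappamod)$ (supplied in practice by Lemma~\ref{keylem}), together with the convexity of this set, which keeps the interpolating segments in the region where the Hessian is pinched between $\kappamod^{-2}$ and $\epsilonmod^{-2}$; the remainder is routine strongly-convex proximal-gradient bookkeeping.
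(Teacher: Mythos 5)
Your proof is correct and is essentially the paper's own argument: your curvature estimates are the content of the paper's Lemma~\ref{lem2}, and your one-step bound $(\star)$ is exactly the key inequality (\ref{thm1:eq5}) inside the paper's Lemma~\ref{lem4} (specialized to $H=\theta_k^{-1}$), after which both proofs contract, iterate, and telescope in the same way. The only cosmetic differences are that you derive $(\star)$ from the $\gamma^{-1}$-strong convexity of the prox objective $\Psi_k$ rather than from the prox inequality of Lemma~\ref{lem3}, and you obtain the $1/k$ rate from plain monotonicity of $\phi_\alpha(\theta_j)$ plus telescoping instead of the FISTA-style multiply-by-$i$ summation the paper borrows from Beck and Teboulle.
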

\begin{proof}
See Section \ref{proof:thm1}.
\end{proof}

\begin{remark}
If $\epsilonmod=\epsilonmod_\star$ and $\kappamod=\kappamod_\star$, and $\theta_{0} \in \M_+(\epsilonmod_\star, \min \{ \psi_{UB}, {\psi}^1_\star \})$, then the assumption that $\hat\theta,\theta_j\in\M_+(\epsilonmod,\kappamod)$, $0\leq j\leq k$ is redundant, as shown in Lemma \ref{lem:spec-bounds-1}-\ref{keylem}, and (\ref{bound:thm1}) holds.
$\square$\end{remark}

An appealing feature of the iteration $\theta_{k+1} = T_\gamma(\theta_k;\alpha)$ is that its convergence rate is \emph{adaptive}, i.e., the algorithm automatically adapts itself to the fastest possible convergence rate dictated by the condition number of $\thetaen$. This is formalized in the following corollary:
\begin{corollary}\label{thm1:coro0}
Let $0<\epsilonmod_{\star\star}<\kappamod_{\star\star}<\infty$ be such that $\lambda_{\textsf{min}}(\thetaen)>\epsilonmod_{\star\star}$, and $\lambda_{\textsf{max}}(\thetaen)<\kappamod_{\star\star}$. Let $\{\theta_k,\;k\geq 0\}$ be a sequence obtained via the map $T_\gamma$: $\theta_{j+1}=T_\gamma(\theta_j;\alpha)$, for some $\gamma\in (0,\epsilonmod_{\star\star}^2]$.  If $\lim_k\theta_k=\thetaen$, then there exists $k_0\geq 0$, such that for all $k\geq k_0$,  
\[\normfro{\theta_k-\thetaen}^2  \leq \left(1-\frac{\gamma}{\kappamod_{\star\star}^2}\right)^{k-k_0} \normfro{\theta_{k_0}-\thetaen}^2.\]
\end{corollary}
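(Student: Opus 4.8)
The plan is to exploit the convergence $\theta_k\to\thetaen$ to trap the tail of the sequence inside the compact spectral band $\M_+(\epsilonmod_{\star\star},\kappamod_{\star\star})$, and then to invoke Theorem~\ref{thm1} on that tail with the parameters $\epsilonmod=\epsilonmod_{\star\star}$ and $\kappamod=\kappamod_{\star\star}$. The substance of the argument is entirely in producing the index $k_0$; the convergence estimate itself is then a verbatim application of Theorem~\ref{thm1} to a time-shifted copy of the sequence.

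First I would record that $\thetaen$ sits strictly inside the band: by hypothesis $\lambda_{\textsf{min}}(\thetaen)>\epsilonmod_{\star\star}$ and $\lambda_{\textsf{max}}(\thetaen)<\kappamod_{\star\star}$, so both spectral constraints defining $\M_+(\epsilonmod_{\star\star},\kappamod_{\star\star})$ hold with strict inequality, leaving room for a perturbation argument. Next I would use the continuity of the extreme-eigenvalue maps: by Weyl's inequality, $|\lambda_{\textsf{max}}(A)-\lambda_{\textsf{max}}(B)|\leq\|A-B\|_2\leq\normfro{A-B}$, and likewise for $\lambda_{\textsf{min}}$, so $A\mapsto\lambda_{\textsf{min}}(A)$ and $A\mapsto\lambda_{\textsf{max}}(A)$ are $1$-Lipschitz in the Frobenius norm. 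Since $\theta_k\to\thetaen$ in Frobenius norm, it follows that $\lambda_{\textsf{min}}(\theta_k)\to\lambda_{\textsf{min}}(\thetaen)$ and $\lambda_{\textsf{max}}(\theta_k)\to\lambda_{\textsf{max}}(\thetaen)$. Because these two limits lie strictly inside $(\epsilonmod_{\star\star},\kappamod_{\star\star})$, there exists $k_0\geq 0$ with $\lambda_{\textsf{min}}(\theta_k)\geq\epsilonmod_{\star\star}$ and $\lambda_{\textsf{max}}(\theta_k)\leq\kappamod_{\star\star}$ for every $k\geq k_0$; that is, $\theta_k\in\M_+(\epsilonmod_{\star\star},\kappamod_{\star\star})$ for all $k\geq k_0$.

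With this in hand, I would fix an arbitrary $k\geq k_0$ and apply Theorem~\ref{thm1} to the shifted sequence $\tilde\theta_j\eqdef\theta_{k_0+j}$, $0\leq j\leq k-k_0$. This sequence is still generated by $T_\gamma$, namely $\tilde\theta_{j+1}=T_\gamma(\tilde\theta_j;\alpha)$; the step-size restriction $\gamma\in(0,\epsilonmod_{\star\star}^2]$ is exactly the hypothesis of Theorem~\ref{thm1} with $\epsilonmod=\epsilonmod_{\star\star}$, $\kappamod=\kappamod_{\star\star}$; and, by the previous paragraph together with $\thetaen\in\M_+(\epsilonmod_{\star\star},\kappamod_{\star\star})$, all the iterates $\tilde\theta_j$ and the minimizer $\thetaen$ lie in $\M_+(\epsilonmod_{\star\star},\kappamod_{\star\star})$. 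Theorem~\ref{thm1} then gives $\normfro{\tilde\theta_{k-k_0}-\thetaen}^2\leq\rho^{\,k-k_0}\normfro{\tilde\theta_0-\thetaen}^2$ with $\rho=1-\gamma/\kappamod_{\star\star}^2$, and undoing the shift recovers the claimed bound (the case $k=k_0$ being the trivial equality).

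The only genuinely nontrivial step is certifying that the iterates eventually enter the prescribed band; everything after it is reindexing. I expect this to be the main obstacle, though a mild one: it relies precisely on the \emph{strictness} of the eigenvalue inequalities for $\thetaen$, which opens a margin for the continuity estimate, and on the Lipschitz continuity of the extreme eigenvalues. No quantitative control on $k_0$ is needed, since the conclusion is purely asymptotic.
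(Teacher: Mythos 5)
Your proposal is correct and follows essentially the same route as the paper: the paper's proof likewise observes that $\thetaen$ lies in the interior of $\M_+(\epsilonmod_{\star\star},\kappamod_{\star\star})$, uses the convergence $\theta_k\to\thetaen$ to place the tail of the sequence in that set from some $k_0$ on, and then applies the bound of Theorem~\ref{thm1} to the shifted sequence. Your write-up merely makes explicit the two steps the paper leaves implicit (the Weyl/Lipschitz continuity of the extreme eigenvalues, and the reindexing), which is a harmless elaboration rather than a different argument.
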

\begin{proof}
By assumption, $\thetaen$ belongs to the interior of $\M_+(\epsilonmod_{\star\star},\kappamod_{\star\star})$. Since $\theta_k\to\thetaen$, there exists $k_0\geq 0$, such that $\theta_k\in \M_+(\epsilonmod_{\star\star},\kappamod_{\star\star})$ for $k\geq k_0$. Then we apply the bound (\ref{bound:thm1}), and the lemma follows.
\end{proof}

The analysis above suggests  the following practical algorithm for Problem~\eqref{eq-glasso-enet-1}. Let $\{\gamma_k\}$ denote a sequence of positive step-sizes with $\lim_k\gamma_k=0$. An example of such a sequence is $\gamma_k=\gamma_0/2^k$, for some $\gamma_0>0$.  For convenience, we 
summarize in Algorithm~1, the deterministic proximal gradient algorithm for Problem~\eqref{eq-glasso-enet-1}.

\begin{algorithm}[\textbf{Deterministic Proximal Gradient}]\label{algo0}~\\
Set $\textsf{r}=0$.
\begin{enumerate}
\item Choose $\theta_0\in\M_+$.
\item  Given $\theta_k$, compute: $\theta_{k+1} = T_{\gamma_\textsf{r}}\left(\theta_k;\alpha\right).$
\item If $\lambda_{\textsf{min}}(\theta_{k+1})\leq 0$, then restart: set $k\leftarrow 0$, $\textsf{r}\leftarrow \textsf{r}+1$, and go back to (1). Otherwise, set 
$k\leftarrow k+1$ and go back to (2).
\end{enumerate}
\end{algorithm}

We present a series of remarks about Algorithm~\ref{algo0}:

\smallskip
\smallskip

\indent $\bullet$ \emph{Positive Definiteness}. In Step 3, positive definiteness is tested and the algorithm is restarted with a smaller step-size, if $\theta_{k+1}$ is no longer positive definite. 
The smallest eigenvalue of $\theta_{k+1}$, i.e.,  $\lambda_{\textsf{min}}(\theta_{k+1})$ can be efficiently computed by several means: (a) it can be computed via the Lanczos process (see e.g. \cite{golub:vl}~Theorem 10.1.2); 
(b) it may also be computed as a part of the step that approximates the spectral interval of $\theta_{k+1}$ using the procedure of~\cite{chen:etal:2011} (c) a  Cholesky decomposition of $\theta_{k+1}$ also returns 
information about whether $\theta_{k+1}$ is positive definite or not.
 
 An efficient implementation of the algorithm is possible by making Step 3 implicit. For instance the positive definiteness of $\theta_{k+1}$ can be checked as part of the computation of the gradient $\nabla f(\theta_{k+1})=S-\theta_{k+1}^{-1}$ in Step 2. 

\medskip

\indent $\bullet$ \emph{Step Size}. If the initial step-size satisfies $\gamma_0\leq \epsilonmod_\star^2$ and $\theta_0\in\M_+(\epsilonmod_\star,\kappamod_\star)$, the algorithm is never re-initialized according to Lemma \ref{keylem}, and Theorem \ref{thm1} holds. However, it is important to notice that Lemma \ref{keylem} and Theorem \ref{thm1} present a worst case analysis scenario and in practice the choice $\gamma_0=\epsilonmod_\star^2$ can be overly conservative.  In fact, Corollary \ref{thm1:coro0} dictates that a better choice of step-size is $\gamma_0=\lambda_{\textsf{min}}(\thetaen)^2$. Obviously $\lambda_{\textsf{min}}(\thetaen)$ is rarely known, but what this implies is that, in practice, one should initialize the algorithm with a large step-size and rely on the re-start trick (Step 3) to reduce the step-size, when $\theta_{k+1}$ is not positive definite.

\medskip

\indent $\bullet$ \emph{Adaptive Convergence Rate}. We have seen in Corollary \ref{thm1:coro0} that the convergence rate of the sequence $\{\theta_k\}$ improves with the iterations. This adaptive convergence rate behavior makes the cost-complexity analysis of Algorithm~\ref{algo0} more complicated. However, to settle ideas, if we set $\theta_0$ close to $\thetaen$, and the step-size obeys $\gamma\approx \lambda_{\textsf{min}}(\thetaen)^2$, Theorem \ref{thm1} and Corollary \ref{thm1:coro0} imply that the number of iterations of Algorithm~\ref{algo0} needed to reach the precision $\deltamod$ (that is $\normfro{\theta_k-\hat\theta}^2\leq \deltamod$) is 
\[O\left(-\frac{\kappamod_{\star\star}^2}{\epsilonmod_{\star\star}^2}\log\deltamod\right)\approx O\left(-\textsf{cond}(\thetaen)^2\log\deltamod\right).\]

\medskip

\indent $\bullet$ \emph{Computational Cost}. The bottleneck of Algorithm~\ref{algo0} is the computation of the inverse $\theta_k^{-1}$, which in general entails a computational cost of $O(p^3)$---See Figure~\ref{fig-inv-chol-evd-1}
showing the computation times of matrix inversions for real symmetric $p \times p$ matrices, in practice.
It follows that in the setting considered above, the computational cost of Algorithm~\ref{algo0} to achieve a $\deltamod$-accurate solution is $O\left(p^3\textsf{cond}(\thetaen)^2\log(1/\deltamod)\right)$.

\section{Stochastic Optimization Based Algorithms}\label{sec:sto:prox:grad}
When $p$ is large (for example, $p=5,000$ or larger), the computational cost of Algorithm~1 becomes prohibitively 
expensive due to the associated matrix inversions---this is a primary motivation behind the stochastic optimization methods that we develop in this section.  
For $\theta\in\M_+$, let $\pi_\theta$ denote the density of $\textbf{N}(0,\theta^{-1})$, the 
mean-zero normal distribution on $\rset^p$ with covariance matrix $\theta^{-1}$. We begin with the elementary observation that  
\[\theta^{-1} = \int z z'\pi_\theta(\rmd z).\]
This suggests that on $\M_+$, we can approximate the gradient $\nabla f(\theta)=S-\theta^{-1}$ by $S-N^{-1}\sum_{j=1}^N z_jz_j'$, where $z_{1:N}\stackrel{\text{i.i.d.}}{\sim}\pi_\theta$; here, the notation $z_{1:N}$ denotes a collection of random vectors $z_{i}, i \leq N$. 

To motivate the stochastic algorithm we will first establish an analog of Lemma \ref{keylem}, showing that iterating the stochastic maps obtained by replacing $\theta_{j-1}^{-1}$
 in computing $T_\gamma(\theta_{j-1};\alpha)$ in~\eqref{Tmap} by the Monte Carlo estimate described above, produces sequences that remain positive definite with high probability. Towards this end,  fix $\gamma>0$; 
a sequence of (positive) Monte Carlo batch-sizes: $\{N_k,\;k\geq 1\}$; and consider the stochastic process $\{\theta_k,\;k\geq 0\}$ defined as follows. First, we fix $\theta_0\in\M_+$. For $k\geq 1$, and given the sigma-algebra $\F_{k-1}\eqdef \sigma(\theta_0,\ldots,\theta_{k-1})$:
\begin{equation}\label{sto:proc:1}
\mbox{generate } z_{1:N_k}\stackrel{\text{i.i.d.}}\sim \textbf{N}(0,\theta_{k-1}^{-1}),\;\;\;\mbox{ compute } \;\;\Sigma_k = \frac{1}{N_k}\sum_{j=1}^{N_k}z_jz_j',\end{equation}
and set:
\begin{equation}\label{sto:proc:2}
\theta_{k} =\Prox_\gamma\left(\theta_{k-1}-\gamma \left(S-\Sigma_{k}\right)\right).
\end{equation}
For any $0<\ell\leq \psi\leq \infty$, we set
\[\tau(\epsilonmod,\kappamod)\eqdef\inf\left\{k\geq 0:\; \theta_k\notin \M_+(\epsilonmod,\kappamod)\right\},\]
with the convention that $\inf\emptyset=\infty$. For a random variable $\Psi\geq \epsilonmod$, we define $\tau(\epsilonmod,\Psi)$ as equal to $\tau(\epsilonmod,\psi)$ on $\{\Psi=\psi\}$. 

Given $\epsilon>0$, we define $\mu_\epsilon \eqdef \|S\|_2 + (\lambda_1+\epsilon)p$,
\[\ell_{\star}(\epsilon) \eqdef \left\{ \begin{array}{ll} \frac{-\mu_\epsilon + \sqrt{\mu_\epsilon^2 + 8 \lambda_{2}}}{4 \lambda_{2}} &\text{if }\alpha\in [0,1) \\
\frac{1}{\mu_\epsilon} & \mbox{if } \alpha=1.\end{array}\right.\]
Similarly, define  $\nu_\epsilon \eqdef \lambda_{\textsf{min}}(S)-(\lambda_1+\epsilon)p$, 
 \[\psi_\star^1(\epsilon) \eqdef  \left\{\begin{array}{ll}\frac{-\nu_\epsilon +\sqrt{\nu_\epsilon^2+8\lambda_2}}{4\lambda_2} & \mbox{ if } \alpha\in [0,1),\\
 \frac{1}{\nu_\epsilon} & \mbox{ if } \alpha=1 \mbox{ and } \nu_\epsilon>0\\
 +\infty & \mbox{ if } \alpha=1 \mbox{ and } \nu_\epsilon\leq  0. \end{array}\right.\]
It is easy to check that $0<\ell_\star(\epsilon)\leq \ell_\star\leq \kappamod_\star^1\leq \kappamod_\star^1(\epsilon)\leq \infty$.

The following theorem establishes the convergence of the stochastic process $\theta_{k}$, produced via the stochastic optimization scheme~\eqref{sto:proc:2}.
\begin{theorem} \label{thm3}
Let $\{\theta_k,\;k\geq 0\}$ be the stochastic process defined by the rules~(\ref{sto:proc:1}-\ref{sto:proc:2}). Fix $\epsilon>0$. Suppose that $\theta_0\in\M_+(\ell_\star(\epsilon),\min(\psi_{UB},\psi_\star^1(\epsilon)))$. Then there exists a random variable $\Psi_\star(\epsilon)\geq \ell_\star(\epsilon)$ such that
\[\PP\left[\tau\left(\epsilonmod_\star(\epsilon),\Psi_\star(\epsilon)\right)=\infty\right]\geq 1-4p^2 \sum_{j\geq 1} \exp\left(-\min\left(1,\frac{\epsilon^2\epsilonmod^2_\star(\epsilon)}{16}\right) N_{j-1}\right).\]
If $\sum_j N_j^{-1}<\infty$, then $\PE(\Psi_\star(\epsilon)^2)<\infty$ (hence $\Psi_\star(\epsilon)$ is finite almost surely), and on $\{\tau\left(\epsilonmod_\star(\epsilon),\Psi_\star(\epsilon)\right)=\infty\}$, $\lim_{k\to\infty}\theta_k = \thetaen$.
\end{theorem}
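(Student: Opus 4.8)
The plan is to express each stochastic step as an \emph{exact} proximal-gradient step for a slightly perturbed data matrix, and then transport the deterministic analysis of Lemma~\ref{keylem} and Theorem~\ref{thm1} through a high-probability control of the Monte Carlo error. Setting $E_k\eqdef\theta_{k-1}^{-1}-\Sigma_k$, the update~\eqref{sto:proc:2} reads $\theta_k=\Prox_\gamma\big(\theta_{k-1}-\gamma((S+E_k)-\theta_{k-1}^{-1})\big)$, i.e.\ it is exactly $T_\gamma(\theta_{k-1};\alpha)$ computed with $S$ replaced by $S_k\eqdef S+E_k$. The first ingredient is a concentration bound: conditionally on $\F_{k-1}$ and on $\{\theta_{k-1}\in\M_+(\ell_\star(\epsilon),\psi)\}$, the samples $z_{1:N_k}$ are i.i.d.\ $\textbf{N}(0,\theta_{k-1}^{-1})$ with $\|\theta_{k-1}^{-1}\|_2\leq 1/\ell_\star(\epsilon)$, so each entry $(\Sigma_k)_{ab}$ is an average of $N_k$ sub-exponential products with conditional mean $(\theta_{k-1}^{-1})_{ab}$; a Bernstein-type bound for Gaussian quadratic forms together with a union bound over the $p^2$ entries gives $\PP\big(\max_{a,b}|(E_k)_{ab}|>\epsilon\mid\F_{k-1}\big)\leq 4p^2\exp\big(-\min(1,\epsilon^2\ell_\star^2(\epsilon)/16)\,N_k\big)$. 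Summing these over all steps yields the probability estimate in the statement. Define the good event $G\eqdef\bigcap_{k\geq1}\{\max_{a,b}|(E_k)_{ab}|\leq\epsilon\}$; on $G$ we have $\|E_k\|_2\leq\normfro{E_k}\leq\epsilon p$ for every $k$.

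Next I would rerun the argument of Lemma~\ref{keylem} with the step-dependent matrix $S_k$ in place of $S$. On $G$ one has $\|S_k\|_2\leq\|S\|_2+\epsilon p$ and $\lambda_{\textsf{min}}(S_k)\geq\lambda_{\textsf{min}}(S)-\epsilon p$, which are precisely the inequalities turning $\mu\mapsto\mu_\epsilon$ and $\nu\mapsto\nu_\epsilon$, hence $\ell_\star\mapsto\ell_\star(\epsilon)$ and $\psi_\star^1\mapsto\psi_\star^1(\epsilon)$. The crucial point is that the lower bound on $\lambda_{\textsf{min}}(\theta_k)$ furnished by Lemma~\ref{keylem} depends on the data only through $\|S_k\|_2$ and the upper bound on $\lambda_{\textsf{max}}(\theta_k)$ only through $\lambda_{\textsf{min}}(S_k)$; on $G$ these are dominated uniformly in $k$ by $\mu_\epsilon$ and $\nu_\epsilon$. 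Consequently the set $\M_+(\ell_\star(\epsilon),\Psi_\star(\epsilon))$, with $\Psi_\star(\epsilon)$ built from $\psi_\star^1(\epsilon)$ and $\psi_{UB}$ as $\psi_\star$ was built in Lemma~\ref{keylem}, is invariant along the whole trajectory on $G$. Since $\theta_0$ lies in the hypothesized initialization set, induction on $k$ gives $G\subseteq\{\tau(\ell_\star(\epsilon),\Psi_\star(\epsilon))=\infty\}$, which together with the concentration bound establishes the first claim.

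For the moment bound and convergence, I would use that on the good set the conditional second moment of the Wishart fluctuation is $\PE(\normfro{E_k}^2\mid\F_{k-1})=O(p^2/N_k)$, so $\sum_jN_j^{-1}<\infty$ forces $\sum_k\normfro{E_k}^2<\infty$ almost surely and $\sum_k\PE(\normfro{E_k}^2)<\infty$. Since $\Psi_\star(\epsilon)^2$ is dominated by a deterministic constant plus a constant multiple of $\sum_k\normfro{E_k}^2$ (for $\alpha\in[0,1)$ it is in fact bounded deterministically by $\psi_\star^1(\epsilon)$), this gives $\PE(\Psi_\star(\epsilon)^2)<\infty$. On $\{\tau(\ell_\star(\epsilon),\Psi_\star(\epsilon))=\infty\}$ the iterates stay in the fixed compact set $\M_+(\ell_\star(\epsilon),\Psi_\star(\epsilon))$, on which $T_\gamma(\cdot;\alpha)$ contracts toward $\thetaen$ with factor $\sqrt\rho$ by the one-step case of Theorem~\ref{thm1}. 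Writing $\theta_k=T_\gamma(\theta_{k-1};\alpha)+\big(\theta_k-T_\gamma(\theta_{k-1};\alpha)\big)$ and using the non-expansiveness of $\Prox_\gamma$ (the two arguments differ by $\gamma E_k$) gives $\normfro{\theta_k-\thetaen}\leq\sqrt\rho\,\normfro{\theta_{k-1}-\thetaen}+\gamma\normfro{E_k}$; since $\normfro{E_k}\to0$ almost surely, the standard fact that a geometric contraction perturbed by a vanishing sequence converges yields $\theta_k\to\thetaen$.

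The main obstacle is the \emph{uniform} invariance in the second paragraph. Because the perturbation $E_k$ changes at every iteration, one cannot simply invoke Lemma~\ref{keylem} for a single fixed problem; one must verify that the self-improving eigenvalue bounds there are monotone and robust in the data matrix so that one common invariant set $\M_+(\ell_\star(\epsilon),\Psi_\star(\epsilon))$ serves the entire random trajectory. This is also why the stopping-time formulation is unavoidable: the concentration constant $\ell_\star(\epsilon)$ appearing in the exponent is only legitimate while $\theta_{k-1}\in\M_+(\ell_\star(\epsilon),\cdot)$ --- which is the very property the step is meant to reproduce --- so the high-probability control must be propagated across all iterations simultaneously rather than one conditional step at a time.
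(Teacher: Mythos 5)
Your first paragraph --- recasting each stochastic update as an exact proximal-gradient step for the perturbed matrix $S_k=S+E_k$, propagating the eigenvalue bounds of Lemma~\ref{keylem} with $\mu\mapsto\mu_\epsilon$, $\nu\mapsto\nu_\epsilon$, controlling $\|E_k\|_\infty$ entrywise by a Gaussian quadratic-form bound with a union bound over $p^2$ entries (this is Lemma~\ref{lem:exp:bound}), and propagating it one conditional step at a time through the stopping time --- is exactly the paper's argument for the probability estimate. The problem lies in the second half of your proposal.

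The gap is your claim that a \emph{deterministic} $\Psi_\star(\epsilon)$, ``built from $\psi_\star^1(\epsilon)$ and $\psi_{UB}$ as $\psi_\star$ was built in Lemma~\ref{keylem},'' gives a set $\M_+(\ell_\star(\epsilon),\Psi_\star(\epsilon))$ that is invariant on $G$. The second component of $\psi_\star$ in Lemma~\ref{keylem}, namely $\psi_{UB}+\sqrt{p}\left(\psi_{UB}-\ell_\star\right)$, is proved there from the monotonicity $\normfro{\theta_j-\thetaen}\leq\normfro{\theta_{j-1}-\thetaen}$, which Lemma~\ref{lem4} delivers only for \emph{exact} gradient steps ($H=\theta_{j-1}^{-1}$). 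For the stochastic step ($H=\Sigma_j$), Lemma~\ref{lem4} leaves the cross term $2\gamma\pscal{\theta_j-\thetaen}{\Sigma_j-\theta_{j-1}^{-1}}$, and on your good event $G$ this only yields $\normfro{\theta_j-\thetaen}\leq\normfro{\theta_{j-1}-\thetaen}+2\gamma\epsilon p$: the distance, and with it $\lambda_{\max}(\theta_j)$, may drift upward linearly in $j$, so no fixed deterministic radius is invariant. Hence in the one case where that second component is actually needed --- $\alpha=1$ with $\nu_\epsilon\leq 0$ (e.g.\ \Glasso with $S$ rank deficient), so that $\psi_\star^1(\epsilon)=+\infty$ --- your induction does not close. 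For $\alpha\in[0,1)$ one always has $\psi_\star^1(\epsilon)<\infty$ and your argument works, but that is the easy case; the theorem's insistence that $\Psi_\star(\epsilon)$ is a random variable whose square-integrability requires $\sum_j N_j^{-1}<\infty$ is precisely about the other case.

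Your fallback, $\Psi_\star(\epsilon)^2\leq C_1+C_2\sum_k\normfro{E_k}^2$, is also unjustified: summing the one-step inequality produces, besides the quadratic term $2\gamma^2\sum_j\normfro{E_j}^2$ coming from non-expansiveness of the prox, a term \emph{linear} in the noise, $2\gamma\sup_k\left|\sum_{j\leq k}V_j\right|$ with $V_j=\mathbf{1}_{\{\tau>j-1\}}\pscal{\thetaen-T_\gamma(\theta_{j-1};\alpha)}{\Sigma_j-\theta_{j-1}^{-1}}$; absorbing it into the quadratic term by Young's inequality reintroduces a multiple of $\normfro{\theta_{j-1}-\thetaen}^2$ that compounds across iterations, which is circular. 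This martingale term is the missing idea. The paper keeps it, defines the random quantity $\zeta\eqdef 2\gamma\sup_{k\geq 1}\left|\sum_{j=1}^k V_j\right|+2\gamma^2\sum_{j\geq 1}\mathbf{1}_{\{\tau>j-1\}}\normfro{\Sigma_j-\theta_{j-1}^{-1}}^2$, sets $\Psi_\star(\epsilon)=\min\left(\psi_\star^1(\epsilon),\,\psi_{UB}+\sqrt{p(\psi_{UB}-\ell_\star(\epsilon))^2+\zeta}\right)$ --- a genuinely random radius --- and bounds $\PE(\zeta)$ under $\sum_j N_j^{-1}<\infty$ via Doob's $L^2$ maximal inequality together with $\PE(V_j^2)=O(N_j^{-1})$. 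Your final convergence step would need a corresponding repair as well: the contraction $\normfro{T_\gamma(\theta_{k-1};\alpha)-\thetaen}\leq\sqrt{\rho}\,\normfro{\theta_{k-1}-\thetaen}$ presupposes eigenvalue control of the exact-gradient image $T_\gamma(\theta_{k-1};\alpha)$, which is not one of your iterates; the paper avoids this by working with $H=\Sigma_j$ throughout and deducing $\sum_j\mathbf{1}_{\{\tau>j-1\}}\normfro{\theta_{j-1}-\thetaen}^2<\infty$ almost surely from the same summed recursion, which forces $\theta_k\to\thetaen$ on $\{\tau=\infty\}$.
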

\begin{proof}
See Section \ref{proof:thm3}.
\end{proof}

\subsubsection*{Growth Condition on the Monte Carlo batch size}  If we let the Monte Carlo sample size $N_k$ increase as


\[N_k\geq  \frac{3\log p}{\min\left(1,\epsilonmod^2_\star(\epsilon)\epsilon^2/16\right)} + \alpha k^q,\]
for some $q>1$, then $\sum_j N_j^{-1}<\infty$, and the bound in Theorem \ref{thm1} above, becomes
\[\PP\left[\tau\left(\epsilonmod_\star(\epsilon),\Psi_\star(\epsilon)\right)=\infty\right]\geq 1-\frac{4\mu}{p},\]
where $\mu=\sum_{j\geq 0}\exp\left(-\alpha\min(1,\epsilonmod^2_\star(\epsilon)\epsilon^2/16)j^q\right)<\infty$. Hence for high-dimensional problems, and for moderately large Monte Carlo sample sizes, $\PP\left[\tau\left(\epsilonmod_\star(\epsilon),\Psi_\star(\epsilon)\right)=\infty\right]$ can be made very close to one---this guarantees that positive definiteness of the process $\{\theta_k,\;k\geq 0\}$ is maintained and the sequence converges to $\thetaen$, with high probability. The convergence rate  of the process is quantified by the following theorem:
\begin{theorem} \label{thm4}
Let $\{\theta_k,\;k\geq 0\}$ be the stochastic process defined by (\ref{sto:proc:1}-\ref{sto:proc:2}). For some $0<\epsilonmod\leq \kappamod\leq +\infty$, suppose that $\theta_0,\thetaen\in\M_+(\epsilonmod,\kappamod)$, and $\gamma\leq \epsilonmod^2$. 
Then
\begin{multline}\label{bound:thm4}
\PE\left[\textbf{1}_{\{\tau(\epsilonmod,\kappamod)>k\}}\normfro{\theta_{k}-\thetaen}^2\right]\leq \left(1-\frac{\gamma}{\kappamod^2}\right)^k \normfro{\theta_0-\thetaen}^2 \\
+ 2\gamma^2\epsilonmod^{-2}(p+p^2)\sum_{j=1}^kN_j^{-1}\left(1-\frac{\gamma}{\kappamod^2}\right)^{k-j}. \end{multline}
\end{theorem}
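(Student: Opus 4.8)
The plan is to derive a one-step recursion for $\PE\!\left[\mathbf{1}_{\{\tau(\epsilonmod,\kappamod)>k\}}\normfro{\theta_k-\thetaen}^2\right]$ and then unroll it geometrically. The driving observation is that the stochastic update \eqref{sto:proc:2} is the deterministic proximal-gradient step perturbed by a conditionally mean-zero noise term, so the contraction that powers Theorem~\ref{thm1} survives in conditional expectation once the extra noise variance is accounted for.

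First I would set up the decomposition. Since $\thetaen$ minimizes $\phi_\alpha$ and lies in the interior of $\M_+$, its optimality condition gives the fixed-point identity $\thetaen=\Prox_\gamma(\thetaen-\gamma\nabla f(\thetaen))$, with $\nabla f(\thetaen)=S-\thetaen^{-1}$. Writing $\xi_k\eqdef\Sigma_k-\theta_{k-1}^{-1}$, the update \eqref{sto:proc:2} reads $\theta_k=\Prox_\gamma(\theta_{k-1}-\gamma(S-\theta_{k-1}^{-1})+\gamma\xi_k)$. Since $\Prox_\gamma$ is non-expansive (the proximal map of the convex function $g_\alpha$ is firmly non-expansive), I obtain
\[
\normfro{\theta_k-\thetaen}^2\leq\normfro{A_k+\gamma\xi_k}^2,\qquad A_k\eqdef(\theta_{k-1}-\thetaen)-\gamma\bigl(\nabla f(\theta_{k-1})-\nabla f(\thetaen)\bigr),
\]
whose expansion is $\normfro{A_k}^2+2\gamma\pscal{A_k}{\xi_k}+\gamma^2\normfro{\xi_k}^2$.

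Next I would control each piece. On the event $\{\tau(\epsilonmod,\kappamod)>k-1\}$ both $\theta_{k-1}$ and $\thetaen$ lie in the convex set $\M_+(\epsilonmod,\kappamod)$, hence so does the whole segment joining them; integrating the Hessian $H\mapsto\theta^{-1}H\theta^{-1}$ (whose eigenvalues lie in $[\kappamod^{-2},\epsilonmod^{-2}]$ there) shows $\nabla f$ is $\kappamod^{-2}$-strongly monotone and $\epsilonmod^{-2}$-Lipschitz along that segment. This is precisely the contraction underlying Theorem~\ref{thm1}: for $\gamma\leq\epsilonmod^2$ the self-adjoint operator $I-\gamma\nabla f$ has spectrum in $[1-\gamma\epsilonmod^{-2},1-\gamma\kappamod^{-2}]\subseteq[0,1-\gamma\kappamod^{-2}]$, so $\normfro{A_k}^2\leq(1-\gamma/\kappamod^2)\normfro{\theta_{k-1}-\thetaen}^2$. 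For the cross term I use that $A_k$ is $\F_{k-1}$-measurable while $\CPE{\xi_k}{\F_{k-1}}=0$ (since $\PE[z_jz_j'\mid\F_{k-1}]=\theta_{k-1}^{-1}$), so its conditional expectation vanishes. For the noise term, $\Sigma_k$ averages $N_k$ i.i.d.\ terms, whence $\CPE{\normfro{\xi_k}^2}{\F_{k-1}}=N_k^{-1}\,\PE[\normfro{z_1z_1'-\theta_{k-1}^{-1}}^2\mid\F_{k-1}]$, and a Gaussian fourth-moment (Wick) computation gives $\PE[\normfro{zz'-\Sigma}^2]=(\tr\Sigma)^2+\tr(\Sigma^2)$ for $z\sim\gauss(0,\Sigma)$. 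On $\{\tau>k-1\}$ we have $\Sigma=\theta_{k-1}^{-1}\preceq\epsilonmod^{-1}I$, so this is at most $(p^2+p)/\epsilonmod^2\leq 2(p+p^2)/\epsilonmod^2$.

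Combining these, and using $\{\tau>k\}\subseteq\{\tau>k-1\}$ together with the $\F_{k-1}$-measurability of $\{\tau>k-1\}$, I take conditional then full expectations to get
\[
\PE\!\left[\mathbf{1}_{\{\tau>k\}}\normfro{\theta_k-\thetaen}^2\right]\leq\Bigl(1-\tfrac{\gamma}{\kappamod^2}\Bigr)\PE\!\left[\mathbf{1}_{\{\tau>k-1\}}\normfro{\theta_{k-1}-\thetaen}^2\right]+\frac{2\gamma^2(p+p^2)}{\epsilonmod^2\,N_k}.
\]
Iterating from $k$ down to $0$, with $\mathbf{1}_{\{\tau>0\}}=1$ because $\theta_0\in\M_+(\epsilonmod,\kappamod)$, yields the geometric sum in \eqref{bound:thm4}. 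The step I expect to be the most delicate is the bookkeeping around the stopping time: the indicator must be inserted so that the contraction (valid only inside $\M_+(\epsilonmod,\kappamod)$) and the unbiasedness/variance bounds (valid only when the sampling covariance equals $\theta_{k-1}^{-1}$ with $\theta_{k-1}$ well-conditioned) are simultaneously justified on the correct $\F_{k-1}$-measurable event, and so that the cross term is legitimately annihilated before the nested conditioning is unwound.
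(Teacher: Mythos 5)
Your proof is correct, and it reaches \eqref{bound:thm4} by a genuinely different route than the paper. You treat the update as a perturbed fixed-point iteration: writing $\theta_k=\Prox_\gamma(\theta_{k-1}-\gamma\nabla f(\theta_{k-1})+\gamma\xi_k;\alpha)$ and using $\thetaen=\Prox_\gamma(\thetaen-\gamma\nabla f(\thetaen);\alpha)$, non-expansiveness of the prox pulls the noise out additively, and the contraction $\normfro{A_k}\leq(1-\gamma/\kappamod^2)\normfro{\theta_{k-1}-\thetaen}$ follows from spectral bounds on the integrated Hessian $H\mapsto\theta^{-1}H\theta^{-1}$ along the segment joining $\theta_{k-1}$ and $\thetaen$, which stays in $\M_+(\epsilonmod,\kappamod)$ on $\{\tau>k-1\}$ by convexity (your ``$I-\gamma\nabla f$'' should read $I-\gamma\nabla^2 f$, but the argument is clearly the intended one). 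The paper instead applies its Lemma~\ref{lem4} with $H=\Sigma_j$ --- a function-value inequality assembled from the strong-convexity/smoothness estimates of Lemma~\ref{lem2} and the prox inequality of Lemma~\ref{lem3} --- so the noise enters through the inner product $\pscal{\thetaen-\theta_j}{\Sigma_j-\theta_{j-1}^{-1}}$; since $\theta_j$ is not $\F_{j-1}$-measurable, the paper splits $\thetaen-\theta_j$ through the auxiliary deterministic iterate $T_\gamma(\theta_{j-1};\alpha)$, producing a martingale increment $V_j$ plus a remainder bounded by $\gamma\normfro{\Sigma_j-\theta_{j-1}^{-1}}$ via non-expansiveness; this yields inequality~(\ref{thm4:eq2}), whose expectation is then iterated. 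Past that point the two arguments coincide: the same martingale annihilation of the cross term, the same Wick computation (Lemma~\ref{lem:exp:bound}) bounding the per-sample variance by $\epsilonmod^{-2}(p+p^2)$, and the same stopping-time bookkeeping with the $\F_{k-1}$-measurable indicator $\mathbf{1}_{\{\tau>k-1\}}$ and $\{\tau>k\}\subseteq\{\tau>k-1\}$, which you handle exactly as required. Your route is more self-contained and in fact slightly sharper --- the variance term carries $\gamma^2$ rather than $2\gamma^2$, your factor of two being pure padding to match the stated constant --- and it never needs the auxiliary iterate $T_\gamma(\theta_{j-1};\alpha)$. What the paper's route buys is shared infrastructure: Lemma~\ref{lem4} simultaneously controls objective gaps $\phi_\alpha(\theta_k)-\phi_\alpha(\thetaen)$ (used in Theorem~\ref{thm1}), and the intermediate inequality~(\ref{thm4:eq2}), with its explicit increments $V_j$, is reused in the proof of Theorem~\ref{thm3} (via Doob's inequality) to obtain the almost-sure statements, whereas your contraction argument tracks only the squared Frobenius distance.
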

\begin{proof}
See Section \ref{proof:thm4}.
\end{proof}

As with the deterministic sequence, the convergence rate of the stochastic sequence $\{\theta_k\}$  is determined by the condition number of $\thetaen$. To see this, take $0<\epsilonmod_{\star\star}<\kappamod_{\star\star}<\infty$, such that $\epsilonmod_{\star\star}<\lambda_{\textsf{min}}(\thetaen)$, and $\lambda_{\textsf{max}}(\thetaen)<\kappamod_{\star\star}$. It is easy to show that a conditional version of (\ref{bound:thm4}) holds almost surely: for $0\leq k_0\leq k$, and for $\tau^{k_0}(\epsilonmod,\kappamod)\eqdef \inf\{k\geq k_0:\;\theta_k\notin \M_+(\epsilonmod,\kappamod)\}$, 
\begin{multline}\label{bound:thm4:cond}
\textbf{1}_{\{\theta_{k_0}\in\M_+(\epsilon_{\star\star},\kappamod_{\star\star})\}} \PE\left[\textbf{1}_{\{\tau^{k_0}(\epsilonmod_{\star\star},\kappamod_{\star\star})>k\}}\normfro{\theta_{k}-\thetaen}^2\vert \F_{k_0}\right]\leq \left(1-\frac{\gamma}{\kappamod_{\star\star}^2}\right)^{k-k_0} \normfro{\theta_{k_0}-\thetaen}^2 \\
+ 2\gamma^2\epsilonmod_{\star\star}^{-2}(p+p^2)\sum_{j=k_0+1}^kN_j^{-1}\left(1-\frac{\gamma}{\kappamod_{\star\star}^2}\right)^{k-k_0-j}. \end{multline}
Therefore, as $\theta_k\to \thetaen$ almost surely, and since $\thetaen\in \M_+(\epsilon_{\star\star},\kappamod_{\star\star})$,  one can find $k_0$ such that with high probability, and for all $k\geq k_0$, the following holds:
\begin{equation}\label{eq:rem:thm4}
\textbf{1}_{\{\theta_{k_0}\in\M_+(\epsilon_{\star\star},\kappamod_{\star\star})\}}\textbf{1}_{\{\tau^{k_0}(\epsilonmod_{\star\star},\kappamod_{\star\star})>k\}}=1.\end{equation}
If we make the (strong) assumption that (\ref{eq:rem:thm4}) holds with probability one, then one can deduce  from (\ref{bound:thm4:cond}) that for $k\geq k_0$,
\begin{multline}\label{bound:thm4:final}
\PE\left[\normfro{\theta_{k}-\thetaen}^2\right]\leq \left(1-\frac{\gamma}{\kappamod_{\star\star}^2}\right)^{k-k_0} \PE\left[\normfro{\theta_{k_0}-\thetaen}^2\right] \\
+ 2\gamma^2\epsilonmod_{\star\star}^{-2}(p+p^2)\sum_{j=k_0+1}^kN_j^{-1}\left(1-\frac{\gamma}{\kappamod_{\star\star}^2}\right)^{k-k_0-j}, \end{multline}
which is an analogue of Corollary \ref{thm1:coro0}. As in the deterministic case, this adaptive behavior complicates the complexity analysis of the algorithm. In the discussion below, we consider the idealized case where $\epsilonmod_{\star\star} =\lambda_{\textsf{min}}(\thetaen)$, $\kappamod_{\star\star} = \lambda_{\textsf{max}}(\thetaen)$, and $\theta_0\in\M_+(\epsilonmod_{\star\star},\kappamod_{\star\star})$.

\subsubsection*{Implications of Theorem~\ref{thm4} and choice of $N_{j}$} We now look at some of the implications of Theorem \ref{thm4} and (\ref{bound:thm4:final}) in the ideal setting where $k_0=0$. If $N_j$ is allowed to increase as $N_j= \lceil N+j^q \rceil$ for some $q>0$, then $\sum_{j=1}^k\left(1-\frac{\gamma}{\kappamod^2}\right)^{k-j}\frac{1}{N_j}\sim \frac{\kappamod^2}{\gamma}\frac{1}{N_k}$, as $k\to\infty$; then the implication of Theorem \ref{thm4} and (\ref{bound:thm4:final}) is that, as $k\to\infty$,
\begin{equation}\label{eq:rate1}
\PE\left[\normfro{\theta_{k}-\thetaen}^2\right] =O\left(\left(1-\frac{\gamma}{\kappamod_{\star\star}^2}\right)^k + \frac{\kappamod^2_{\star\star}}{\gamma N_k}\right)=O\left(\rho^k + \frac{\kappamod_{\star\star}^2}{\gamma k^q}\right),\end{equation}
with $\rho=1-\frac{\gamma}{\kappamod_{\star\star}^2}$. Notice that the best choice of the step-size is $\gamma = \epsilonmod_{\star\star}^2$. Setting $\gamma=\epsilonmod_{\star\star}^2$, it follows that the number of iterations to guarantee that the left-hand side of (\ref{eq:rate1}) is smaller than $\deltamod\in(0,1)$ is 
\begin{equation}\label{eq:n_star}
k_\star = \left(\frac{\kappamod_{\star\star}^2}{\epsilonmod_{\star\star}^2}\frac{1}{\deltamod}\right)^{\frac{1}{q}} \vee \frac{\log(\deltamod^{-1})}{\log(\rho^{-1})},\end{equation}
where $\rho=1-\frac{\epsilonmod_{\star\star}^2}{\kappamod_{\star\star}^2}$, and $a\vee b=\max(a,b)$. This implies that in choosing $N_j= \lceil N+j^q \rceil$, one should choose $q>0$ such that 
\begin{equation}\label{eq:cond:q}
\left(\frac{\kappamod^2_{\star\star}}{\epsilonmod_{\star\star}^2}\frac{1}{\deltamod}\right)^{\frac{1}{q}} = \frac{\log(\deltamod^{-1})}{\log(\rho^{-1})}=O\left( \frac{\kappamod^2_{\star\star}}{\epsilonmod_{\star\star}^2}\log(\deltamod^{-1})\right) = O\left(\textsf{cond}(\thetaen)^2\log(\deltamod^{-1})\right),\end{equation}
where $\textsf{cond}(\thetaen)=\lambda_{\textsf{max}}(\thetaen)/\lambda_{\textsf{min}}(\thetaen)$ is the condition number of $\thetaen$. Incidentally, (\ref{eq:cond:q}) shows that one should choose $q>1$, as also needed in Theorem \ref{thm3}.



The results developed above  suggest the following  stochastic version of Algorithm~\ref{algo0}.
As above, let $\{\gamma_k,\;k\geq 0\}$ be a sequence of positive step-sizes decreasing to zero, and let  $\{N_k,\; k\geq 0\}$ be a sequence of Monte Carlo sample sizes. That is, $N_k$ is the number of Monte Carlo sample draws from $\pi_{\theta_k}$ at iteration $k$. Algorithm~\ref{algo1} is summarized below:

\medskip

\begin{algorithm}\label{algo1}~\\
Set $\textsf{r}=0$.
\begin{enumerate}
\item Choose $\theta_0\in\M_+$.
\item Given $\theta_k$, generate $z_{1:N_k}\stackrel{\text{i.i.d.}}{\sim} \pi_{\theta_k}$, i.e., the density of $\textbf{N}(0,\theta_k^{-1})$, and set 
\[\Sigma_{k+1}=\frac{1}{N_k}\sum_{j=1}^{N_k} z_jz_j'.\]
\item Compute
\[\theta_{k+1} =\Prox_{\gamma_\textsf{r}}\left(\theta_k-\gamma_\textsf{r} (S-\Sigma_{k+1});\alpha\right).\]
\item If $\lambda_{\textsf{min}}(\theta_{k+1})\leq 0$, then restart: set $k\leftarrow 0$, $\textsf{r}\leftarrow \textsf{r}+1$, and go back to (1). 
Otherwise, set $k\leftarrow k+1$ and go to (2).
\end{enumerate}
\end{algorithm}


\begin{remark}
As in Algorithm~\ref{algo0}, the actual implementation of Step 4 can be avoided. For instance if the simulation of the Gaussian random variables in Step 2 uses the Cholesky decomposition, it \emph{returns} 
the information whether $\lambda_{\textsf{min}}(\theta_{k+1})\leq 0$. In this case, we restart the algorithm from $\theta_0$ (or from $\theta_k$), and with a smaller step-size, and a larger Monte Carlo batch size.
$\square$\end{remark}

\subsection{Sampling via dense Cholesky decomposition}\label{sec:cost-chol-1}
The main computational cost  of  Algorithm~2 lies in generating multivariate Gaussian random variables. The standard scheme for simulating such random variables is to decompose the precision matrix $\theta$ as 
\begin{equation}\label{eq:chol}
\theta = R'R,\end{equation}
 for some nonsingular matrix $R\in\rset^{p\times p}$. Then a random sample from $\textbf{N}(0,\theta^{-1})$ is obtained by simulating $u\sim \textbf{N}(0,I_p)$ and returning $R^{-1}u$. The most common but remarkably effective 
 approach to achieve the above decomposition~\eqref{eq:chol}
  is via the Cholesky decomposition, which leads to $R$ being triangular. This approach entails a total cost of $O(p^2m + p^3/3)$ to generate a set of $m$ independent Gaussian random variables and computing the outer-product matrix, which 
  forms an approximation to $\theta^{-1}$.
  The term $p^3/3$ accounts for the cost of the Cholesky decomposition; and $p^2m$ accounts for doing $m$ back-solves $R^{-1}u_{i}$ for $m$ many standard Gaussian random vectors $u_{i}, i = 1, \ldots, m$; and subsequently computing 
  $\frac{1}{m} \sum_{i=1}^{m} (R^{-1}u_{i})(R^{-1}u_{i})'$ --- note that each back-solve $R^{-1}u_{i}$ can be performed with $O(p^2)$ cost since $R$ is triangular. This shows that an iteration of Algorithm~\ref{algo1}, implemented via Cholesky decomposition, is more cost-effective than an iteration of Algorithm~\ref{algo0}, if the number of Gaussian random samples generated in that iteration is less than $p$. Since $k_\star$  iterations (as defined in (\ref{eq:n_star})) are needed to reach the precision $\deltamod$, and $N_k = N+ k^q$ (we assume that $q$ is chosen as in (\ref{eq:cond:q})), we see that the number of samples per iteration of Algorithm~\ref{algo1} remains below $p$, if $p\geq \textsf{cond}(\thetaen)^2\deltamod^{-1}$. In this case the overall computational cost  of Algorithm~\ref{algo1}, to obtain a $\deltamod$-accurate solution is 
\[O\left(p^3\frac{\log(\deltamod^{-1})}{\log(\rho^{-1})}\right) = O\left(p^3\textsf{cond}(\thetaen)^2\log(\deltamod^{-1})\right).\]

We caution the reader that, on the surface, the above cost seems to be of the same order as that of the deterministic algorithm (Algorithm~\ref{algo0}), as seen from Theorem \ref{thm1}. 
However, the constants in the big-O notation differ, and are much better for the Cholesky decomposition than for inverting a matrix---see Figure~\ref{fig-inv-chol-evd-1} for a compelling illustration of this observation. In addition, as the problem sizes become 
much larger (i.e., larger than $p \approx 35,000$)  matrix inversions become much more memory intensive than Cholesky decompositions;  leading to prohibitely increased computation times---see Figure~\ref{fig-inv-chol-evd-1}.

\subsection{Sampling via specialized sparse numerical linear algebra methods}
As an alternative to the above approach, note that equation (\ref{eq:chol}) is also solved by $R=\theta^{1/2}$. If $\theta$ is sparse and very large, specialized numerical linear algebra methods can be used to compute $\theta^{-1/2}b$ for a vector or matrix $b$, with matching dimensions. These methods include Krylov space methods (\cite{Hale2008,eiermann:ernst:06}), or matrix function approximation methods (\cite{chen:etal:2011}).  These methods heavily exploit sparsity and typically scale better than the Cholesky decomposition when dealing with very large sparse problems. For instance, the matrix function approximation method of \cite{chen:etal:2011} has a computational cost of $O(m(p+C_p))$ to generate a set of $m$ samples from $\textbf{N}(0,\theta^{-1})$, where $C_p$ is the cost of performing a matrix-vector product $\theta b$ for some $b\in\rset^p$. 
As comparison, Figure \ref{Fig:GaussianComp} shows the time for generating $1,000$ random samples from $\textbf{N}(0,\theta^{-1})$, using dense Cholesky factorization, and using the matrix function approximation approach of (\cite{chen:etal:2011}), for varying values of $p$. The value of $p$ around which the matrix approximation method becomes better than the Cholesky decomposition depends on the sparsity of $\theta$, and the implementations of the methods. 
\begin{figure}[h!]
\centering
\scalebox{0.95}[.8]{\begin{tabular}{c c c}
& & (Zoomed) \\
\rotatebox{90}{ \hspace{1.5cm} \sf {\small Time (in seconds) }} & \includegraphics[width=0.5\textwidth,  height=0.3\textheight, trim = .6cm 1.2cm 0cm 1.5cm, clip = true ]{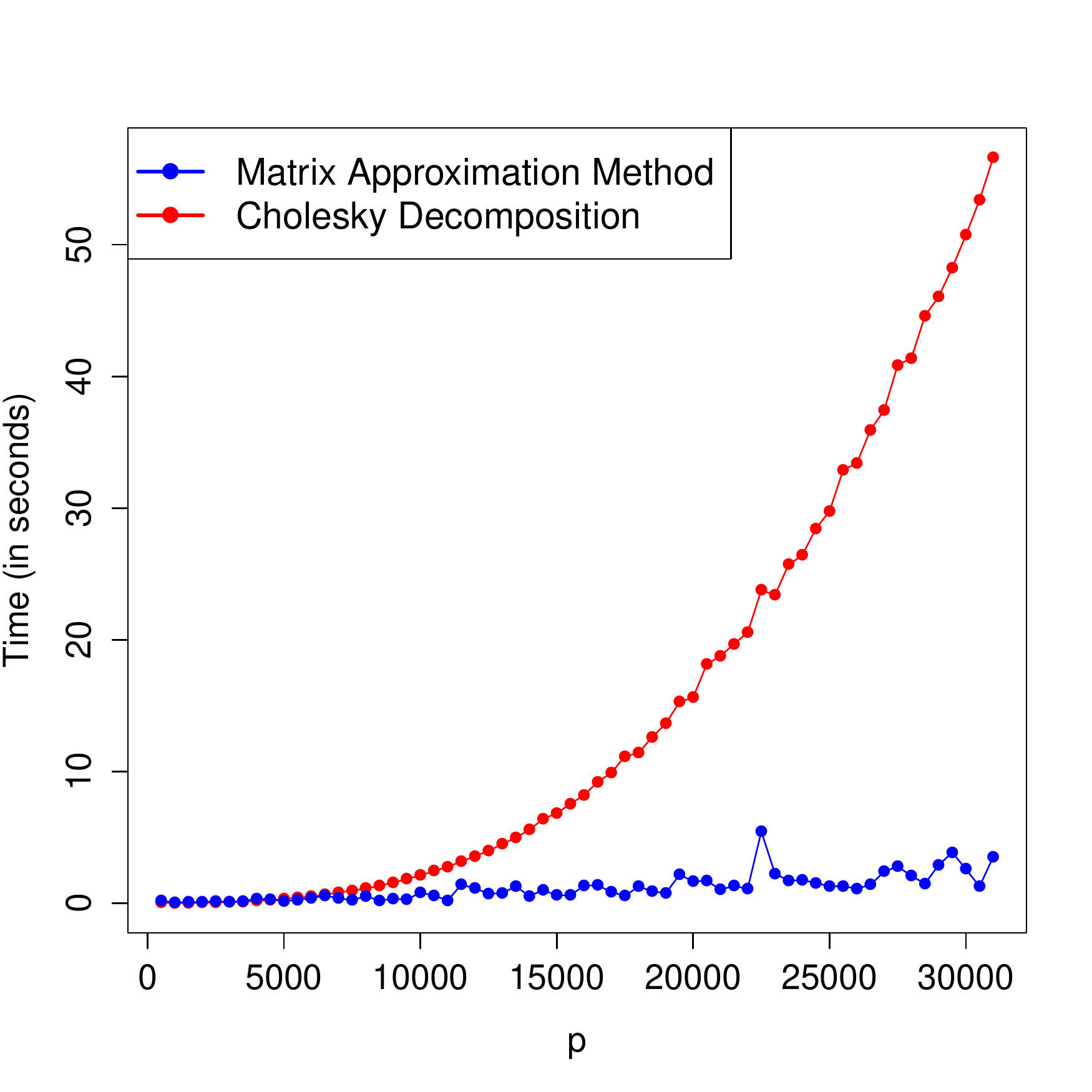} &
\includegraphics[width=0.5\textwidth, height=0.3\textheight, trim = .6cm 1.2cm 0cm 1.5cm, clip = true ]{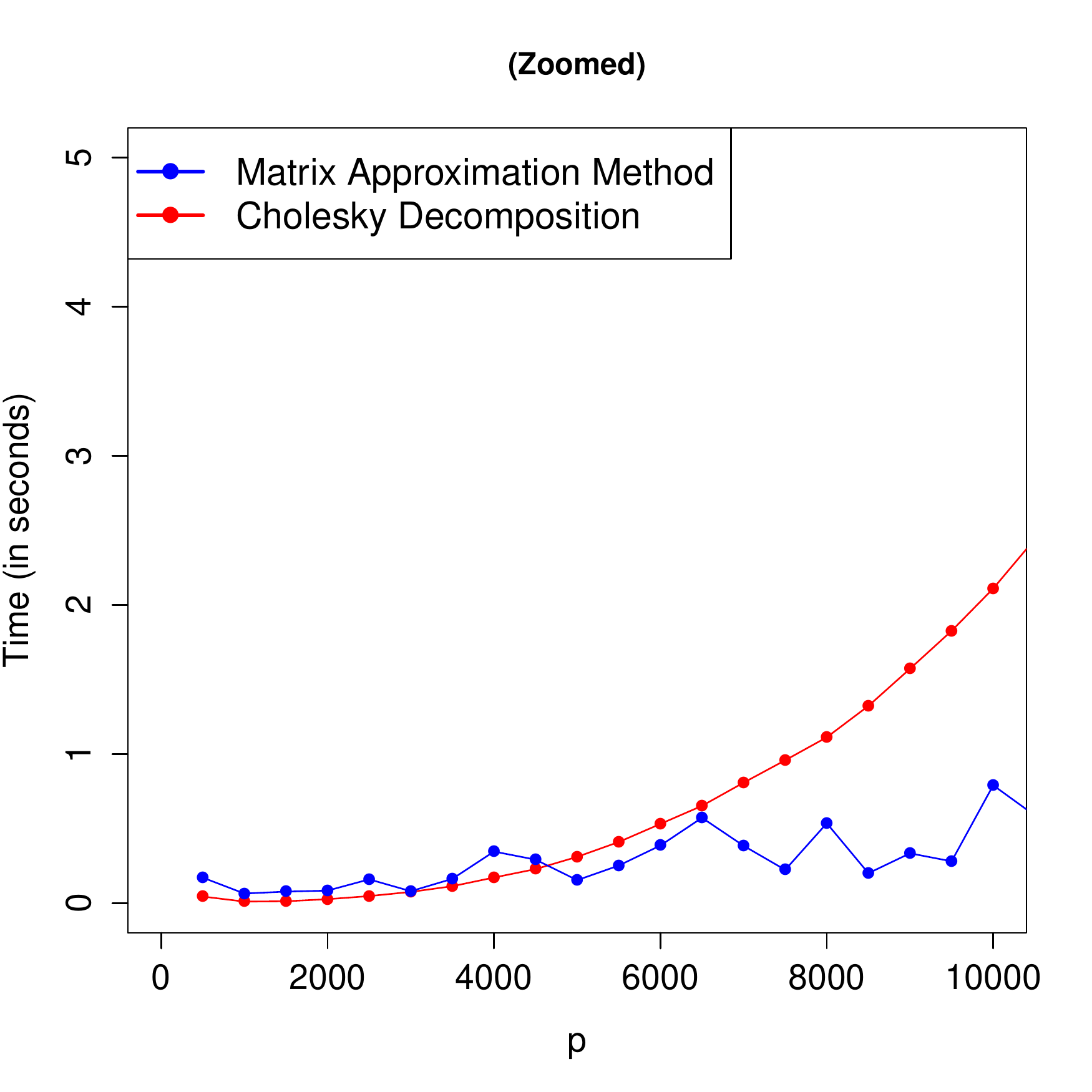} \\
 & \sf $p$   &  \sf $p$ \\
\end{tabular}}
\caption{{\small{Figure showing the times in seconds to generate $1,000$ Gaussian random samples from $\textbf{N}(0,\theta^{-1})$, where $\theta\in\rset^{p\times p}$ is constructed as explained in Section \ref{sec:simdata} with the proportion of non-zeros entries 
approximately set at $5/p$. 
} }}\label{Fig:GaussianComp}
\end{figure}
These specialized sparse methods, however,  need to be used with caution.
For one thing, these methods are quite sensitive to the sparsity level of the iterates $\theta_k$, and ultimately to the sparsity level $\hat{\theta}$, the solution to Problem~\eqref{eq-glasso-enet-1} --- the methods are useful only when the solutions are sufficiently sparse. 
This behavior should be contrasted to that of dense Cholesky decomposition based methods, which are less sensitive to the sparsity level of $\hat{\theta}$. 
Based on our experiments (not reported here), we recommend the use of dense Cholesky decomposition methods in the initial stages of the algorithm, when the iterates $\theta_{k}$ are relatively dense.  As the number of iterations progresses and the estimates become more sparse, we recommend the use of specialized sparse numerical linear algebra methods for sampling from the Gaussian distributions.  
Since the use of dense Cholesky decomposition methods amply substantiates the main message of our paper---the effectiveness of 
stochastic gradient methods as a computationally scalable alternative to their deterministic counterparts, our experimental results reported in Section~\ref{sec:numerics} focus on dense numerical linear algebra methods.

\subsection{Borrowing information across iterations}\label{sec:refinement}
A main limitation of Algorithm~\ref{algo1} is that at each iteration $k$, all the Monte Carlo samples used to estimate $\theta_{k}^{-1}$ are discarded, and new samples are generated to approximate $\theta_{k+1}^{-1}$.
We thus ask, is there a modified algorithm that makes clever use of the information associated with an approximate 
$\theta_{k}^{-1}$ to approximate $\theta_{k+1}^{-1}$? In this vein, 
we propose herein a new stochastic algorithm: Algorithm~\ref{algo2} which recycles previously generated Monte Carlo samples in a novel fashion, to update its approximation for $\Sigma_{k+1}:=\theta^{-1}_{k+1}$ from $\Sigma_{k}:=\theta^{-1}_{k}$.
 
This new algorithm relies on the following algorithm parameters  (a)  $N$, where $N\geq 1$ is a given integer, and (b) $\{\zeta_k,\;k\geq 1\}$ which  is a sequence of positive numbers such that 
\begin{equation}\label{cond:zeta}
\sum_{k\geq 1}\zeta_k =\infty,\;\;\mbox{ and }\;\;\; \sum_{k\geq 1}\zeta_k^2<\infty.\end{equation}
The algorithm is summarized below:
\begin{algorithm}\label{algo2}
Set $\textsf{r}=0$.
\begin{enumerate}
\item Choose $\theta_0\in\M_+$, and $\Sigma_0\in\M_+$.
\item Given $\theta_k$, and $\Sigma_k$, generate $z_{1:N}\stackrel{\text{i.i.d.}}{\sim} \pi_{\theta_k}=\textbf{N}(0,\theta_k^{-1})$, and compute
\begin{equation}\label{algo2:rec1}
\Sigma_{k+1}=\Sigma_k +\zeta_{k+1} \left(\frac{1}{N}\sum_{k=1}^{N} z_kz_k' - \Sigma_k\right).\end{equation}
\item Compute
\begin{equation}\label{algo2:rec2}
\theta_{k+1} =\Prox_{\gamma_\textsf{r}}\left(\theta_k-\gamma_\textsf{r} (S-\Sigma_{k+1});\alpha\right).\end{equation}
\item If $\lambda_{\textsf{min}}(\theta_{k+1})\leq 0$, then restart: set $k\leftarrow 0$, $\textsf{r}\leftarrow \textsf{r}+1$, and go back to (1). Otherwise, set 
$k\leftarrow k+1$ and go to (2).
\end{enumerate}
\end{algorithm}

Notice that in Algorithm~\ref{algo2}, the number of Monte Carlo samples is held fixed at $N$. Hence its cost per iteration is constant. 

Algorithm~\ref{algo2} is more difficult to analyze because the two recursive equations (\ref{algo2:rec1}) and (\ref{algo2:rec2}) are intimately coupled. However, the next result gives some theoretical guarantees by showing that when the sequence $\{\theta_k,\;k\geq 0\}$ converges, it necessarily converges to the minimizer of Problem~\eqref{eq-glasso-enet-1}, i.e., $\thetaen$. 

\begin{theorem}\label{thm5}
Let $\{\theta_k,\;k\geq 0\}$ be the stochastic process generated by Algorithm~\ref{algo2} where, the sequence $\{\zeta_k\}$ satisfies (\ref{cond:zeta}). Fix $0<\epsilonmod\leq \kappamod<\infty$.  Suppose that $\thetaen,\theta_0\in\M_+(\epsilonmod,\kappamod)$, and $\gamma\leq \epsilonmod^2$. 
Then, on the event 
\[ \left\{\tau(\epsilonmod,\kappamod)=+\infty,\;\mbox{ and }\; \{\theta_k\} \mbox{ converges} \right\},\]
we have that $\lim_{k \to\infty} \theta_k=\hat\theta$.
\end{theorem}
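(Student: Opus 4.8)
The plan is to work on the event $\Omega_0\eqdef\{\tau(\epsilonmod,\kappamod)=+\infty\}\cap\{\{\theta_k\}\text{ converges}\}$ and to denote by $\theta_\infty$ the (random) limit of $\{\theta_k\}$ there. Since $\M_+(\epsilonmod,\kappamod)$ is closed and, on $\Omega_0$, every iterate lies in it, we have $\theta_\infty\in\M_+(\epsilonmod,\kappamod)$; in particular $\theta_\infty$ is invertible with $\epsilonmod\leq\lambda_{\textsf{min}}(\theta_\infty)$ and $\lambda_{\textsf{max}}(\theta_\infty)\leq\kappamod$. The goal $\theta_\infty=\hat\theta$ then splits into two parts: first show that the auxiliary sequence satisfies $\Sigma_k\to\theta_\infty^{-1}$ on $\Omega_0$, and then pass to the limit in the proximal recursion~(\ref{algo2:rec2}) to identify $\theta_\infty$ with the unique minimizer.

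For the first part I would center the recursion~(\ref{algo2:rec1}) at the limit. Writing $G_{k+1}\eqdef N^{-1}\sum_{j=1}^N z_jz_j'$ with $z_{1:N}\stackrel{\text{i.i.d.}}{\sim}\textbf{N}(0,\theta_k^{-1})$ and $\xi_{k+1}\eqdef G_{k+1}-\theta_k^{-1}$, one has $\PE[\xi_{k+1}\mid\F_k]=0$, so $\xi_{k+1}$ is a martingale-difference noise. Setting $e_k\eqdef\Sigma_k-\theta_\infty^{-1}$ and $b_k\eqdef\theta_k^{-1}-\theta_\infty^{-1}$, (\ref{algo2:rec1}) becomes
\[ e_{k+1}=(1-\zeta_{k+1})e_k+\zeta_{k+1}\left(b_k+\xi_{k+1}\right). \]
On $\Omega_0$ the continuity of matrix inversion gives $b_k\to 0$, and because every $\theta_k^{-1}$ has spectrum in $[\kappamod^{-1},\epsilonmod^{-1}]$, a Gaussian fourth-moment computation bounds $\PE[\normfro{\xi_{k+1}}^2\mid\F_k]$ by a constant $C=C(p,N,\epsilonmod)$. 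The two hypotheses in~(\ref{cond:zeta}) now play their classical Robbins--Monro roles: $\sum_k\zeta_k=\infty$ makes $e_k$ a (Toeplitz) weighted average of the bias sequence $b_k\to 0$, which therefore tends to $0$, while $\sum_k\zeta_k^2<\infty$ together with the uniform conditional variance makes $\sum_j\zeta_j\xi_j$ an $L^2$-bounded martingale, hence a.s.\ convergent, so its averaged effect on $e_k$ vanishes as well. Combining these yields $e_k\to 0$, i.e.\ $\Sigma_k\to\theta_\infty^{-1}$ on $\Omega_0$.

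For the second part I would let $k\to\infty$ in~(\ref{algo2:rec2}). Since $\Prox_\gamma(\cdot\,;\alpha)$ is continuous (indeed $1$-Lipschitz, by its explicit form~(\ref{prox:formula})), and on $\Omega_0$ we have $\theta_k\to\theta_\infty$, $\theta_{k+1}\to\theta_\infty$, and $\Sigma_{k+1}\to\theta_\infty^{-1}$, the recursion passes to the limit to give the fixed-point relation
\[ \theta_\infty=\Prox_\gamma\left(\theta_\infty-\gamma\,(S-\theta_\infty^{-1});\alpha\right), \]
where $\nabla f(\theta_\infty)=S-\theta_\infty^{-1}$ is well defined as $\theta_\infty\in\M_+$. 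Reading off the optimality condition of the inner minimization defining $\Prox_\gamma$ gives $0\in\nabla f(\theta_\infty)+\partial g_\alpha(\theta_\infty)=\partial\phi_\alpha(\theta_\infty)$; because $\theta_\infty$ lies in the interior $\M_+$ of the feasible set, this is exactly the first-order optimality condition for Problem~(\ref{eq-glasso-enet-1}). By Lemma~\ref{lem:spec-bounds-1} the minimizer is unique, so $\theta_\infty=\hat\theta$, as claimed.

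The hard part will be the first step, the convergence $\Sigma_k\to\theta_\infty^{-1}$. The difficulty is that the natural centering point $\theta_\infty^{-1}$ is the trajectory's own random limit and is hence $\F_\infty$-measurable rather than $\F_k$-adapted, so $b_k$ cannot be treated as a predictable drift in the usual stochastic-approximation template. I would handle this by keeping the martingale control (which only uses $\PE[\xi_{k+1}\mid\F_k]=0$ and the uniform variance bound, neither involving $\theta_\infty$) strictly separate from the purely pathwise averaging of $b_k\to0$, and by first passing to the stopped process $\theta_{k\wedge\tau(\epsilonmod,\kappamod)}$ so that the spectral containment and the variance bound hold identically rather than merely on an event, the two statements then agreeing on $\Omega_0$. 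Once $\Sigma_k\to\theta_\infty^{-1}$ is secured, the remaining steps are routine continuity and optimality-condition arguments.
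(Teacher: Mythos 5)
Your proposal is correct, but it reaches the conclusion by a genuinely different route than the paper. The paper never identifies the limit through optimality conditions: it applies Lemma~\ref{lem4} (the same strong-convexity/contraction inequality used for Theorems~\ref{thm1}, \ref{thm3} and \ref{thm4}) with $H=\Sigma_{i+1}$ to get
\[
\normfro{\theta_{i+1}-\thetaen}^2\leq \left(1-\tfrac{\gamma}{\kappamod^2}\right)\normfro{\theta_i-\thetaen}^2+2\gamma\pscal{\theta_{i+1}-\thetaen}{\Sigma_{i+1}-\theta_i^{-1}},
\]
iterates this bound, and thereby reduces the whole theorem to showing $\normfro{\Sigma_{k+1}-\theta_k^{-1}}\to 0$ on the event in question. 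That last fact is established with the same Robbins--Monro ingredients you use, but with the \emph{adapted} centering $\Sigma_{k+1}-\theta_k^{-1}$ instead of your centering at the random limit $\theta_\infty^{-1}$: the recursion is decomposed into a telescoping term $\theta_{k-1}^{-1}-\theta_k^{-1}$ (this is precisely where the hypothesis that $\{\theta_k\}$ converges enters, making the telescoped series finite), a $(\zeta_k-\zeta_{k+1})$ term, and the martingale term $\zeta_{k+1}\eta_{k+1}$, whose series converges a.s.\ because $\sum_k\zeta_k^2<\infty$ gives $L^2$-boundedness; Kronecker's lemma with the weights $\prod_i(1-\zeta_i)$ then finishes. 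The two centerings are equivalent (since $\theta_k^{-1}\to\theta_\infty^{-1}$ pathwise on the event), so the real difference is the final identification step: you use continuity of $\Prox_\gamma$, the fixed-point characterization $0\in\nabla f(\theta_\infty)+\partial g_\alpha(\theta_\infty)$, and uniqueness of the minimizer, whereas the paper's contraction argument forces $\theta_k\to\thetaen$ directly. Your route is more modular and would survive in settings where no strong-convexity contraction is available; the paper's route avoids any subdifferential sum-rule calculus, reuses a lemma already proved, and is more quantitative in flavor.

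One small technical caveat in your localization step: by the definition of $\tau(\epsilonmod,\kappamod)$ as the \emph{first exit} time, the stopped process $\theta_{k\wedge\tau}$ actually leaves $\M_+(\epsilonmod,\kappamod)$ at time $\tau$, so the uniform conditional variance bound for the noise does not hold identically for it. You should either stop at $\tau-1$ or, as the paper does throughout, carry indicators $\1_{\{\tau>k\}}$ inside the martingale; this is a routine fix and does not affect the validity of your argument.
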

\begin{proof}
See Section \ref{sec:proofthm5}.
\end{proof}

\section{Exact Thresholding into connected components}\label{sec:thres}

As mentioned in Section~\ref{sec:intro}, the exact covariance thresholding rule~\citep{mazumder2012exact}, originally developed for the \Glasso problem plays a 
crucial role in the scalability of \Glasso to large values of $p$, for large values of $\lambda$. One simply requires that the largest 
connected component of the graph $((\mathbf{1}( |s_{ij} | > \lambda))),$ is of a size that can be handled by an algorithm for solving \Glasso of that size.~In this section, we extend this result to the more general case of Problem~\eqref{eq-glasso-enet-1}. 

Consider the symmetric binary matrix ${\mathcal E}:=((\mathcal E_{ij}))$ with ${\mathcal E}_{ij} = \mathbf{1}( |s_{ij} | > \alpha\lambda_{}),$ which defines a graph on the nodes ${\mathcal V} = \{ 1, \ldots, p \}$. 
Let $({\mathcal V}_{j}, {\mathcal E}_{j}), j = 1, \ldots, J$ denote the $J$ connected components of the graph $({\mathcal V}, {\mathcal E})$.
Let $\hat{\theta}$ be a minimizer of  Problem~\eqref{eq-glasso-enet-1} and consider the graph ${\mathcal{\widehat{E}}}$ induced by the sparsity pattern of $\hat{\theta}$, namely, 
${\mathcal{\widehat{E}}}_{ij} =  \mathbf{1}( |\hat{\theta}_{ij} | \neq 0)$. Let the connected components of $(\mathcal V, {\mathcal{\widehat{E}}})$ be denoted by 
$({\mathcal{\widehat{V}}}_{j}, {\mathcal{\widehat{E}}}_{j}), j = 1, \ldots, \widehat{J}$.
The following theorem states that these connected components are essentially the same.

\begin{theorem}\label{thm:conn-comp-1}
Let $({\mathcal V}_{j}, {\mathcal E}_{j}), j = 1, \ldots, J$ and $({\mathcal{\widehat{V}}}_{j}, {\mathcal{\widehat{E}}}_{j}), j = 1, \ldots, \widehat{J}$ 
denote the connected components, as defined above. 
 
Then, $J = \widehat{J}$ and there exists a permutation $\Pi$ on $\{ 1, \ldots, J\}$ such that
${\mathcal{\widehat{V}}}_{\Pi(j)} = {\mathcal {V} }_{j} $ and ${\mathcal{\widehat{E}}}_{\Pi(j)} = {\mathcal E}_{j}$ for all $j = 1, \ldots, J$.

\begin{proof}
See Appendix, Section~\ref{proof-conn-comp-1} for the proof.
\end{proof}
\end{theorem}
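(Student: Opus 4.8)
The plan is to adapt the exact-thresholding argument (originally due to the decomposition results for \textsc{Glasso}) to the elastic-net penalty, working entirely through the first-order optimality (subgradient) conditions of Problem~\eqref{eq-glasso-enet-1} together with the uniqueness of $\hat\theta$ supplied by Lemma~\ref{lem:spec-bounds-1}. Writing stationarity at the optimum, there is a matrix $\Gamma$ with $\Gamma_{ij}=\sign(\hat\theta_{ij})$ when $\hat\theta_{ij}\neq 0$ and $\Gamma_{ij}\in[-1,1]$ otherwise, such that
\[
-\hat\theta^{-1}+S+\alpha\lambda\,\Gamma+(1-\alpha)\lambda\,\hat\theta=0.
\]
In particular, at any off-diagonal pair with $\hat\theta_{ij}=0$ this forces $|(\hat\theta^{-1})_{ij}-s_{ij}|\leq\alpha\lambda$. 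I would prove the statement in two directions: (i) the partition induced by $\widehat{\mathcal E}$ refines the one induced by $\mathcal E$; and (ii) each block $\mathcal V_j$ remains connected under $\widehat{\mathcal E}$. Together these force the two partitions to coincide, which is exactly the claim $J=\widehat J$ together with the matching permutation.

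For direction (i), I would show $\hat\theta$ is block diagonal along $\{\mathcal V_1,\dots,\mathcal V_J\}$. The clean route is to construct a candidate $\tilde\theta$ that is block diagonal along $\{\mathcal V_j\}$, whose $j$-th block solves the elastic-net precision problem for the principal submatrix $S_{\mathcal V_j}$ (each such restricted problem has a unique solution, again by Lemma~\ref{lem:spec-bounds-1}). Since the inverse of a block-diagonal matrix is block diagonal with the same pattern, $\tilde\theta^{-1}$ vanishes off the blocks; the within-block stationarity conditions then match those of the restricted problems by construction, while for an off-block pair $(a,b)$ one has $\tilde\theta_{ab}=(\tilde\theta^{-1})_{ab}=0$, so the required subgradient inequality reduces to $|s_{ab}|\leq\alpha\lambda$. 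This holds precisely because $a,b$ lie in different connected components of $\mathcal E$, i.e.\ $\mathcal E_{ab}=0$. Hence $\tilde\theta$ satisfies the global optimality conditions, and uniqueness gives $\hat\theta=\tilde\theta$. Consequently $\hat\theta_{ij}=0$ whenever $i,j$ lie in distinct $\mathcal V_j$'s, so each connected component of $\widehat{\mathcal E}$ is contained in a single $\mathcal V_j$, giving $\widehat J\geq J$.

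For direction (ii), I would argue by contradiction: suppose the subgraph of $\widehat{\mathcal E}$ induced on some $\mathcal V_j$ is disconnected, so $\mathcal V_j=A\cup B$ with $A,B$ nonempty and $\hat\theta_{ab}=0$ for all $a\in A$, $b\in B$. Combined with the global block-diagonality from (i), the principal block $\hat\theta_{\mathcal V_j}$ is itself block diagonal along $(A,B)$; its inverse, which agrees with the corresponding block of $\hat\theta^{-1}$, is then also block diagonal along $(A,B)$, so $(\hat\theta^{-1})_{ab}=0$ for all such pairs. Substituting $\hat\theta_{ab}=0$ and $(\hat\theta^{-1})_{ab}=0$ into the subgradient condition yields $|s_{ab}|\leq\alpha\lambda$, i.e.\ $\mathcal E_{ab}=0$, for every $a\in A$, $b\in B$. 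This means there is no $\mathcal E$-edge between $A$ and $B$, contradicting the connectedness of $\mathcal V_j$ in $\mathcal E$. Hence each $\mathcal V_j$ is a single connected component of $\widehat{\mathcal E}$, giving $\widehat J\leq J$; with (i) this produces $J=\widehat J$ and a permutation $\Pi$ identifying the components, so that $\widehat{\mathcal V}_{\Pi(j)}=\mathcal V_j$ and $\widehat{\mathcal E}_{\Pi(j)}=\mathcal E_j$.

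The main obstacle is the converse direction (ii): I must let the ``block diagonal $\Rightarrow$ block-diagonal inverse'' structure propagate correctly from the global solution down to the principal submatrix on $\mathcal V_j$, and confirm there is no circularity in invoking stationarity — the entries $(\hat\theta^{-1})_{ab}$ are \emph{read off} from the already-determined block-diagonal $\hat\theta$, not assumed. The elastic-net term $(1-\alpha)\lambda\,\hat\theta$ is harmless throughout, since it multiplies $\hat\theta_{ab}=0$ on exactly the off-block and off-sub-block pairs that drive both directions and therefore drops out of the governing inequalities; the argument thus goes through uniformly for all $\alpha\in(0,1]$.
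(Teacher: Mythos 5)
Your proposal is correct and follows essentially the same route as the paper's proof: both directions rest on the KKT/subgradient conditions, the fact that a block-diagonal positive definite matrix has a block-diagonal inverse, and uniqueness of $\hat\theta$ from Lemma~\ref{lem:spec-bounds-1}, with your direction (i) being precisely the paper's construction of the block-diagonal candidate $\widetilde\theta$ assembled from the solutions of the restricted problems. The only difference is organizational: the paper's first part shows directly that the components of $\mathcal{E}$ refine those of $\widehat{\mathcal{E}}$ (working with $\hat\theta$ block-diagonalized along its own sparsity pattern, independently of the second part), whereas you run that direction as a contradiction inside each $\mathcal{V}_j$ after invoking (i); the underlying computation---reading $|s_{ab}|\leq \alpha\lambda$ off the stationarity equation at entries where both $\hat\theta_{ab}$ and $(\hat\theta^{-1})_{ab}$ vanish---is identical.
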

Note that the permutation $\Pi$ arises since the labelings of two connected component decompositions may be different.

Theorem~\ref{thm:conn-comp-1} is appealing because the connected components of the graph ${\mathcal E}_{ij} = \mathbf{1}( |s_{ij} | > \alpha\lambda_{})$ are fairly easy to compute even for massive sized graphs---see also~\cite{mazumder2012exact} for additional discussions
pertaining to similar observations for the \Glasso problem. A simple but powerful consequence of Theorem~\ref{thm:conn-comp-1} is that, once the connected components $({\mathcal V}_{j}, {\mathcal E}_{j}), j = 1, \ldots, J$ are obtained, 
Problem~\eqref{eq-glasso-enet-1} can be solved independently for each of the $J$ different connected component blocks. 
In concluding, we note that Theorem~\ref{thm:conn-comp-1} is useful if the maximum size of the connected components is small compared to $p$, which of course depends upon $S$ and $\lambda, \alpha$.

\section{Special Case: Ridge regularization}\label{sec:ridge-regu}
In this section, we focus our attention to a special instance of Problem~\eqref{eq-glasso-enet-1}, namely, the ridge regularized version, i.e., Problem~\eqref{eq-ridge}
for some value of $\lambda >0$. 
Interestingly, the solution to this problem can be computed analytically as presented in the following lemma:
\begin{lemma}\label{closed-soon-ridge}
Let $S = U D U'$ denote the full eigendecomposition of $S$ where, $D = \diag(d_{1}, \ldots, d_{p})$.
For any $\lambda_{}>0$ and $\alpha = 0$ the solution to Problem~\eqref{eq-glasso-enet-1} is given by:
$ \hat{\theta} = U \diag(\widehat{\sigma}) U',$
where, $\diag(\widehat{\sigma})$ is a diagonal matrix with the $i$th diagonal entry given by
$$\widehat{\sigma}_{i} =  \frac{ - d_{i} + \sqrt{d_{i}^2 + 4 \lambda_{}}}{2 \lambda_{}}, \;\;\;  \text{for}\; \;\; i =1, \ldots, p.$$
\begin{proof}
For the proof see Section~\ref{proof-ridge-soln-1}
\end{proof}
\end{lemma}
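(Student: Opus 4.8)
The plan is to solve Problem~\eqref{eq-ridge} directly by exploiting the orthogonal invariance of the objective. I would write $\phi_0(\theta)=-\log\det\theta+\textsf{Tr}(\theta S)+\frac{\lambda}{2}\normfro{\theta}^2$. Since $\theta$ ranges over $\M_+$ and the regularizer $\frac{\lambda}{2}\normfro{\theta}^2$ is strictly convex while $f$ is convex on $\M_+$, the objective $\phi_0$ is strictly convex on $\M_+$, so a minimizer, if it exists, is unique; existence follows from coercivity (the $\log\det$ barrier forces the eigenvalues away from $0$, and the Frobenius term forces them to stay bounded). I would then record the first-order optimality condition: the minimizer $\hat\theta$ of a differentiable convex function on the open cone $\M_+$ satisfies $\nabla\phi_0(\hat\theta)=0$, i.e.
\[
-\hat\theta^{-1}+S+\lambda\hat\theta=0.
\]

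The key step is to diagonalize this matrix equation in the eigenbasis of $S$. Substituting $S=UDU'$ and conjugating the optimality equation by $U$, I would argue that $\hat\theta$ and $S$ are simultaneously diagonalizable. The cleanest way is to guess the solution of the stated form $\hat\theta=U\diag(\widehat\sigma)U'$ and verify it satisfies the optimality equation: plugging in gives, entrywise on the diagonal, $-\widehat\sigma_i^{-1}+d_i+\lambda\widehat\sigma_i=0$, equivalently the quadratic $\lambda\widehat\sigma_i^2+d_i\widehat\sigma_i-1=0$. Taking the root that keeps $\widehat\sigma_i>0$ (so that $\hat\theta\in\M_+$) yields exactly
\[
\widehat\sigma_i=\frac{-d_i+\sqrt{d_i^2+4\lambda}}{2\lambda},
\]
since the other root is negative. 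Because $\phi_0$ is strictly convex with a unique stationary point, this verified candidate is \emph{the} global minimizer, completing the proof.

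The one point requiring genuine care — and the main obstacle — is justifying that $\hat\theta$ must share the eigenvectors of $S$, rather than merely exhibiting a solution of that form and appealing to uniqueness. Verifying the candidate plus invoking strict convexity sidesteps this, so I would favor the verification route; but if a direct derivation is wanted, I would multiply the optimality equation $-\hat\theta^{-1}+S+\lambda\hat\theta=0$ on the left by $\hat\theta$ and on the right by $\hat\theta$ to get symmetric relations, and then show $\hat\theta$ commutes with $S$: from $S=\hat\theta^{-1}-\lambda\hat\theta$ it is immediate that any matrix function of $\hat\theta$ commutes with $S$, hence $S$ is a polynomial-type function of $\hat\theta$ and the two are simultaneously diagonalizable. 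This commutation argument is the crux; once it is in hand, the problem decouples into the $p$ scalar quadratics above, each solved by the positive root, and the result follows. I would note this is the $\alpha=0$ specialization of the spectral bounds in Lemma~\ref{lem:spec-bounds-1}, which already guarantees $\hat\theta\in\M_+$ with controlled spectrum, reinforcing that the positive root is the correct choice.
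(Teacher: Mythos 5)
Your proposal is correct and follows essentially the same route as the paper's proof: write the stationarity condition $-\hat\theta^{-1}+S+\lambda\hat\theta=0$, posit the ansatz $\hat\theta=U\diag(\widehat\sigma)U'$ in the eigenbasis of $S$, reduce to the scalar quadratics $\lambda\widehat\sigma_i^2+d_i\widehat\sigma_i-1=0$, and take the positive root. The only difference is that you make explicit the existence/strict-convexity/uniqueness argument (and the commutation alternative) that the paper leaves implicit, which strengthens rather than changes the argument.
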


We make the following remarks:
\begin{itemize}
\item Performing the eigen-decomposition of $S$ is clearly the most expensive part in computing a solution to Problem~\eqref{eq-ridge}; for a general 
real $p \times p$ symmetric matrix this has cost $O(p^3)$ and can be significantly more expensive than computing a direct matrix inverse or a Cholesky decomposition, as  reflected in Figure~\ref{fig-inv-chol-evd-1}.

\item When $p \gg n$ and $n$ is small, a minimizer for Problem~\eqref{eq-ridge} can be computed for large $p$, by  
observing that $S = \frac{1}{n} \sum_{i=1}^{n} \mathbf{x}_{i} \mathbf{x}_{i}' = \frac1n X'X$; thus the eigendecomposition of $S$ can be done efficiently via a SVD of the $n \times p$ 
rectangular matrix 
$X$ with $O(n^2p)$ cost, which reduces to 
$O(p)$ for values of $p \gg n$ with $n$ small.

\item However, computing the solution to Problem~\eqref{eq-ridge} becomes quite difficult when both $p$ and $n$ are large.
In this case, both our stochastic algorithms: Algorithms~\ref{algo1} and \ref{algo2}  are seen to be very useful to get an approximate solution within a fraction of the total computation time. Section~\ref{sec:num-dense-1} presents some numerical experiments. 

\end{itemize}

\section{Numerical experiments}\label{sec:numerics}
We performed some experiments to demonstrate the practical merit of our algorithm on some synthetic and real datasets.

\subsubsection*{Software Specifications} All our computations were performed in~{\textsc{Matlab}} (R2014a (8.3.0.532) 64-bit (maci64)) on a OS X 10.8.5 (12F45) operating system with a 3.4 GHz Intel Core i5 processor 
with 32 GB Ram,  processor speed 1600 MHz and DDR3 SDRAM.

\subsection{Studying sparse problems}
\subsubsection{Simulated data}\label{sec:simdata}
We test Algorithms~\ref{algo0}, \ref{algo1} and~\ref{algo2} with $p=10^3, 5\times 10^3$, and $p= 10^4$ for some synthetic examples.
The data matrix $S\in\rset^{p\times p}$ is generated as $S=n^{-1}\sum_{j=1}^n \mathbf{x}_j\mathbf{x}_j'$, where $n=p/2$, and  
$X_{1:n}\stackrel{\text{i.i.d.}}{\sim}\textbf{N}_p(0,\theta_\star^{-1})$, for a ``true" precision matrix  $\theta_\star$ generated as follows. 
First we generate a symmetric sparse matrix $B$ such that the proportion of non-zeros entries is $10/p$. We magnified the signal by adding $4$ to all the non-zeros entries of $B$ (subtracting $4$ for negative non-zero entries). Then we set $\theta_\star=B + (\epsilonmod-\lambda_{\min}(B))I_p$, where $\lambda_{\min}(B)$ is the smallest eigenvalue of $B$, with $\epsilonmod=1$. 

\medskip

Given $S$, we solve Problem (\ref{eq-glasso-enet-1}) with $\alpha \approx 0.9$ and $\lambda \propto \sqrt{\log(p)/n}$ such that the sparsity (i.e., the number of non-zeros) 
of the solution is roughly $10/p$. In all the examples, we ran the deterministic algorithm (Algorithm~1) for a large number of iterations (one thousand) with a step-size $\gamma=3.5$ to obtain 
a high-accuracy approximation of $\hat{\theta}$, the solution to Problem~\eqref{eq-glasso-enet-1} (we take this estimate as $\thetaen$ in what follows). Algorithms~1, 2 and 3  were then evaluated as how they progress towards the optimal solution $\hat{\theta}$ (recall that 
the optimization problem has a unique minimizer), as a function of time. All the algorithms were ran for a maximum of 300 iterations. Further details in setting up the solvers and parameter specifications are gathered in Section~\ref{comp-details-11} (appendix). 
To measure the quality of the solution, we used the following metric:
$$\text{Relative Error} = \|\theta_{k} - \hat{\theta} \|_{F}/\|\hat{\theta}\|_{F},$$ as a function of the number of iterations of the algorithms. 
Since the work done per iteration by the different algorithms are different, we monitored the progress of the algorithms as a function of time.
The results are shown in Figure~\ref{fig2:error}. 

We also compared the performance of our algorithms with the exact thresholding scheme (Section~\ref{sec:thres}) switched ``on'' --- this offered marginal improvements since the size
of the largest component was comparable to the size of the original matrix --- see Section~\ref{comp-details-11} for additional details on the sizes of 
the connected components produced. We also compared our method with a state-of-the algorithm: \quic~\citep{JMLR:v15:hsieh14a}, 
the only method that seemed to scale to \emph{all} the problem sizes that have been considered in our computational experiments. We used the {\texttt{R}} package
{\texttt{QUIC}}, downloaded from {\texttt{CRAN}} for our experiments.
The results are shown in Table~\ref{table-comp-quic-etc}.

We note that it is not fair to compare our methods versus \quic due to several reasons. 
Firstly the available implementation of \quic works for the \Glasso problem and the experiments we consider are for the generalized elastic net problem~\eqref{eq-glasso-enet-1}.
Furthermore, \quic is a fairly advanced implementation written in {\texttt{ C++}}, whereas our method is implemented entirely in \matlab.
In addition, the default convergence criterion used by \quic is different than what we use. 
However, we do report the computational times of \quic simply to give an idea of where we are in terms of the state-of-the art algorithms for \Glassoperiod
~Towards this end, we ran \quic for the \Glasso problem with $\lambda = \alpha \lambda$ for a large tolerance parameter (we took the native tolerance parameter, based on relative errors 
used in the algorithm \quic by setting 
its convergence threshold (tol) as $10^{-10}$), the solution thus obtained was denoted by $\hat{\theta}$.
We ran \quic for a sequence of twenty tolerance values of the form $0.5 \times 0.9^r$ for $r=1, \ldots, 20$; and then obtained the solution for which the relative error 
$\|\theta_{\text{r}} - \hat{\theta}\|_{F}/\|\hat{\theta}\|_{F} \leq \text{Tol} $ with $\text{Tol} \in \{ 0.1,  0.02\}$. 
For reference, the times taken by \quic to converge to its ``default'' convergence threshold (given by its relative error convergence threshold: tol$=10^{-4}$) were 501 seconds for for $p=5,000$  and 
3020 seconds for $p=10,000$.

\begin{figure}[]
\centering
\resizebox{\textwidth}{0.3\textheight}{\begin{tabular}{c c c c}
&\multicolumn{3}{c}{ Evolution of Relative Error of Algorithms~1---3 versus time \medskip}\\
&\small{\sf {$p=1000$}} & \small{\sf{ $p=5000$ } } & \small{ \sf{ $p=10,000$} } \\
\rotatebox{90}{\sf {\scriptsize{ \hspace{1cm} Relative error =$ \|\theta_{k} - \hat{\theta} \|_{F}/\|\hat{\theta}\|_{F} $ }}}&
\includegraphics[width=0.32\textwidth,height=0.3\textheight,  trim = 1cm .1cm 0cm 1.5cm, clip = true ]{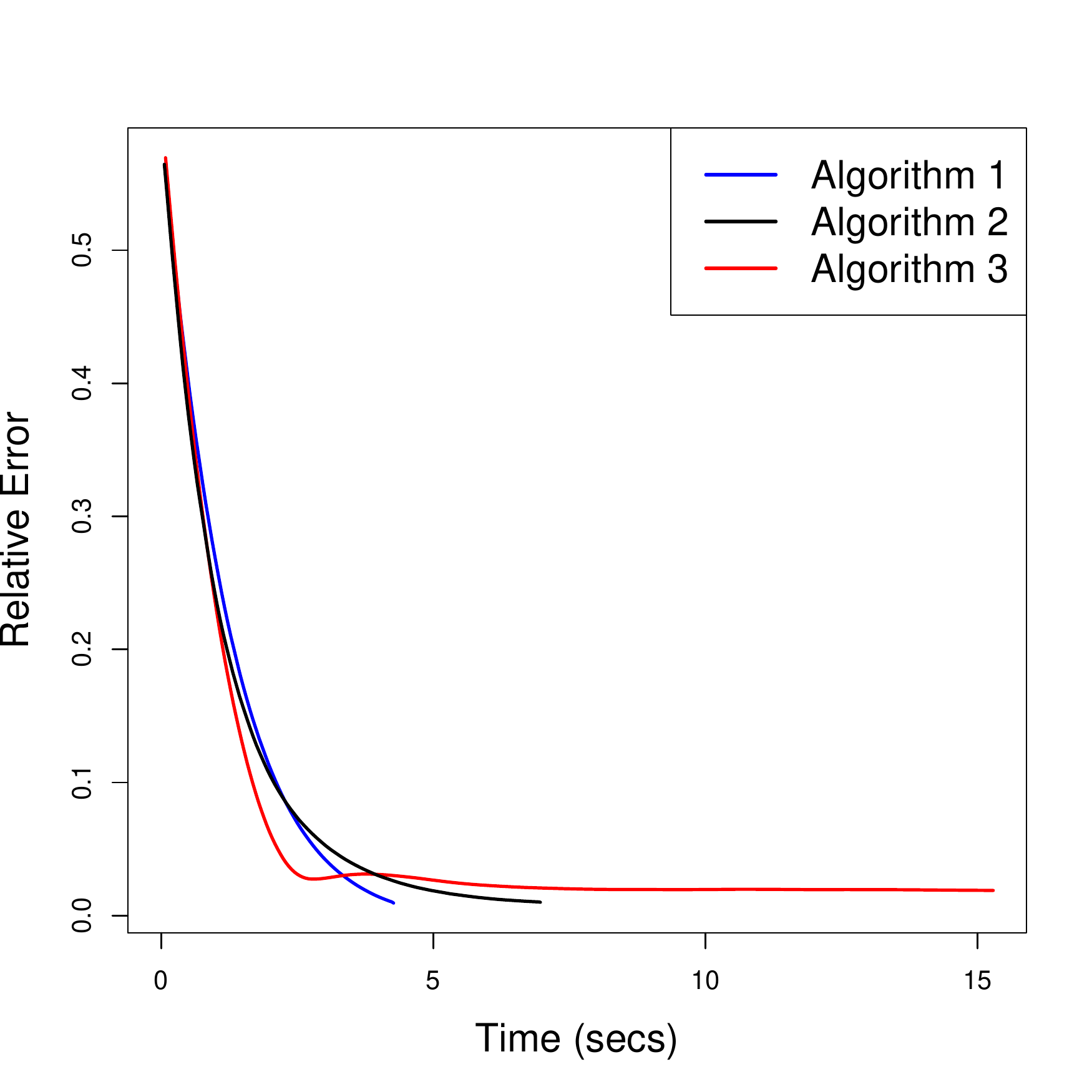}&
\includegraphics[width=0.32\textwidth,height=0.3\textheight ,  trim = 1cm .1cm 0cm 1.5cm, clip = true]{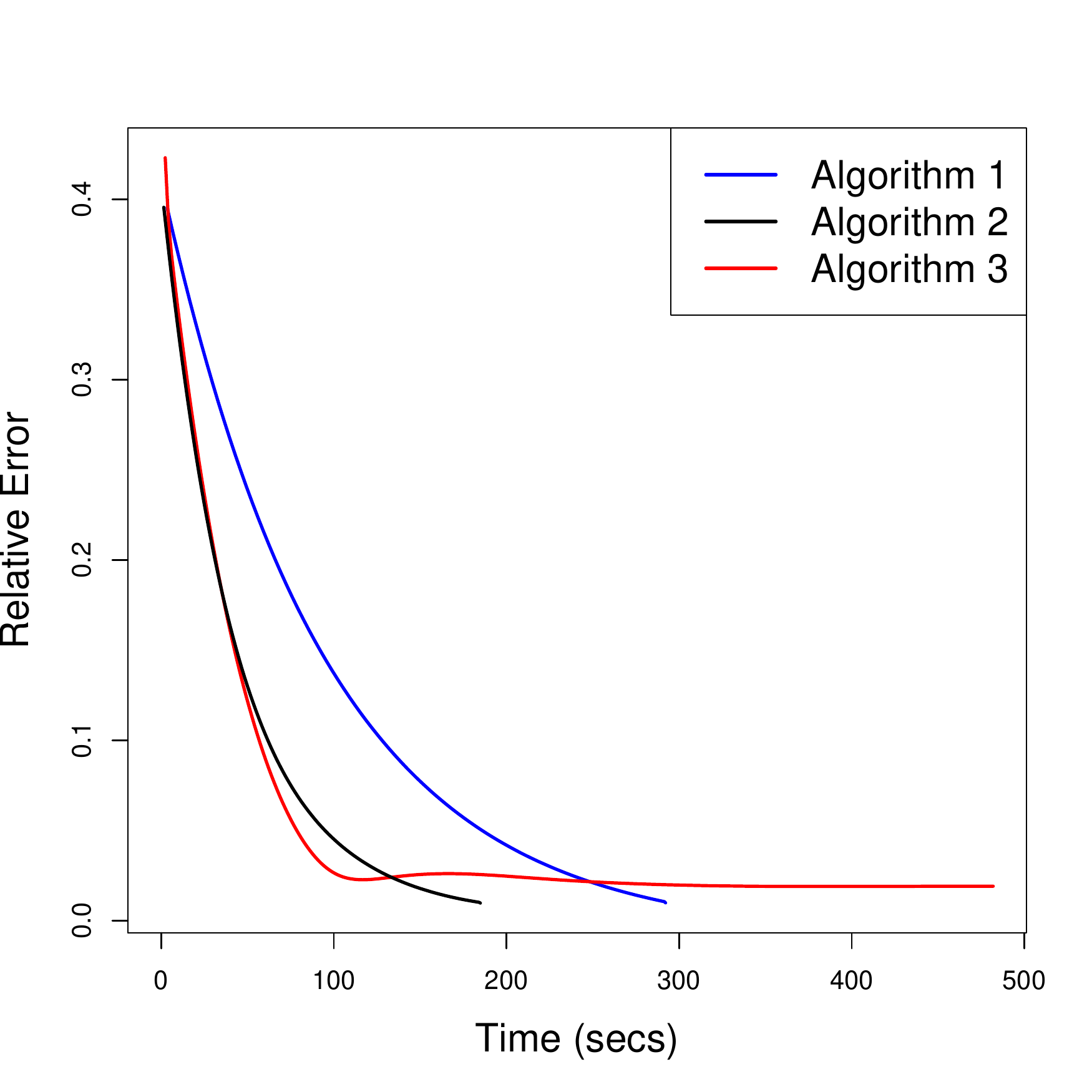}&
\includegraphics[width=0.32\textwidth,height=0.3\textheight,  trim = 1cm .1cm 0cm 1.5cm, clip = true]{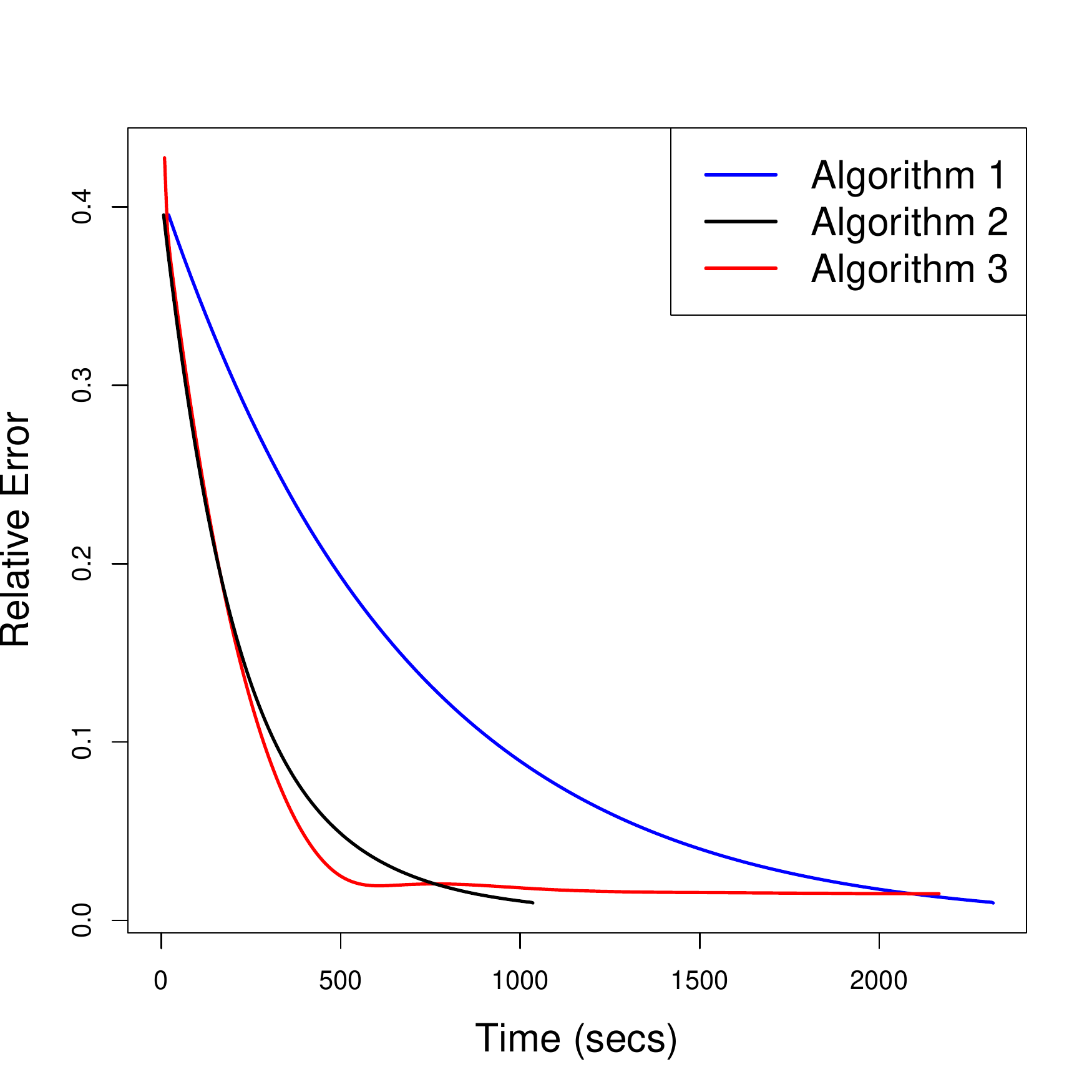} \\ \\
&\multicolumn{3}{c}{ Evolution of Sparsity of Algorithms~1---3 versus time \medskip}\\
&\small{\sf {$p=1000$}} & \small{\sf{ $p=5000$ } } & \small{ \sf{ $p=10,000$} } \\
\rotatebox{90}{\sf {\scriptsize{ \hspace{1cm} Sparsity = $\frac{1}{p}$ (\# non-zeros in $\theta_{k}$) }}}&
\includegraphics[width=0.32\textwidth,height=0.3\textheight,  trim = 1.cm .1cm 0cm 1.5cm, clip = true ]{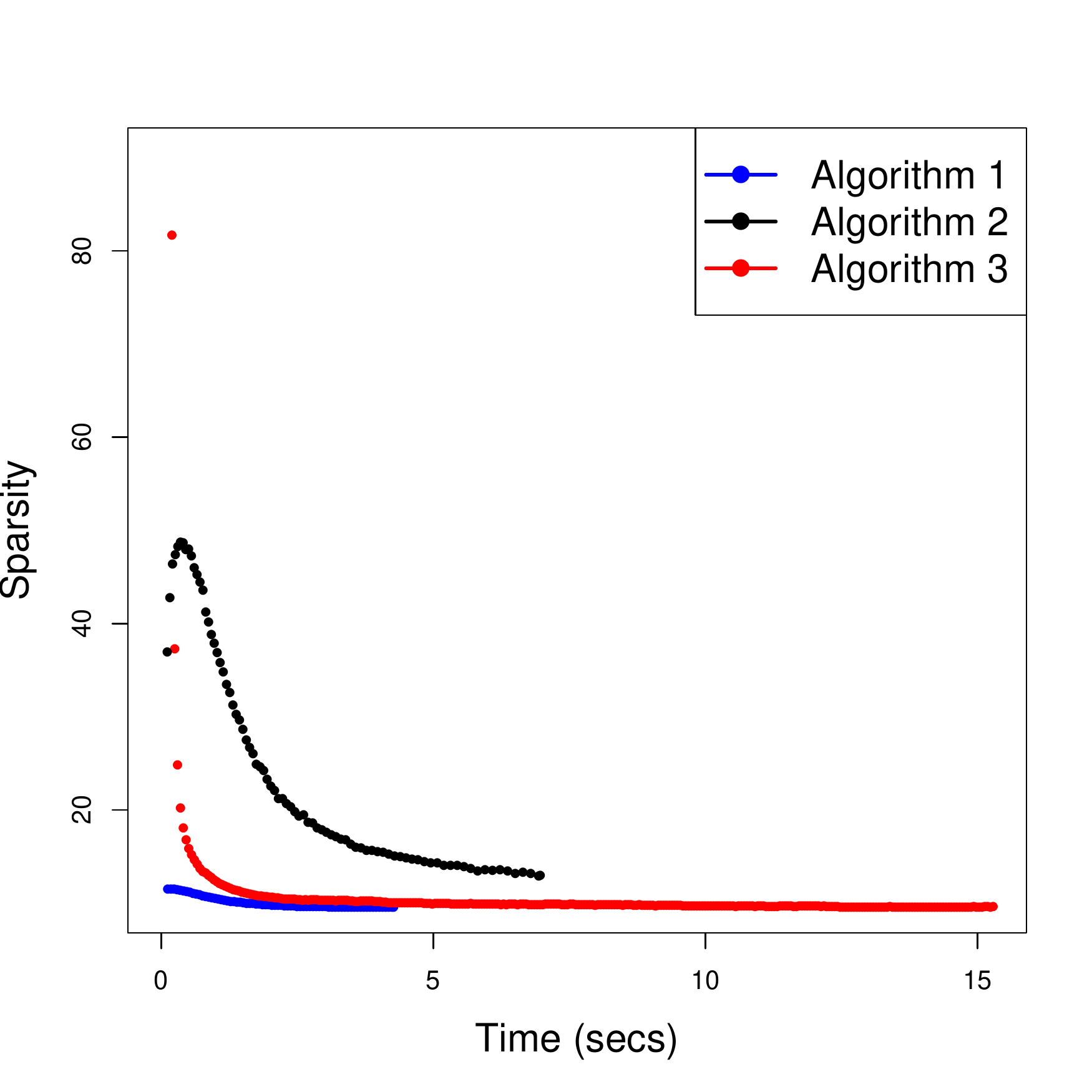}&
\includegraphics[width=0.32\textwidth,height=0.3\textheight ,  trim = 1cm .1cm 0cm 1.5cm, clip = true]{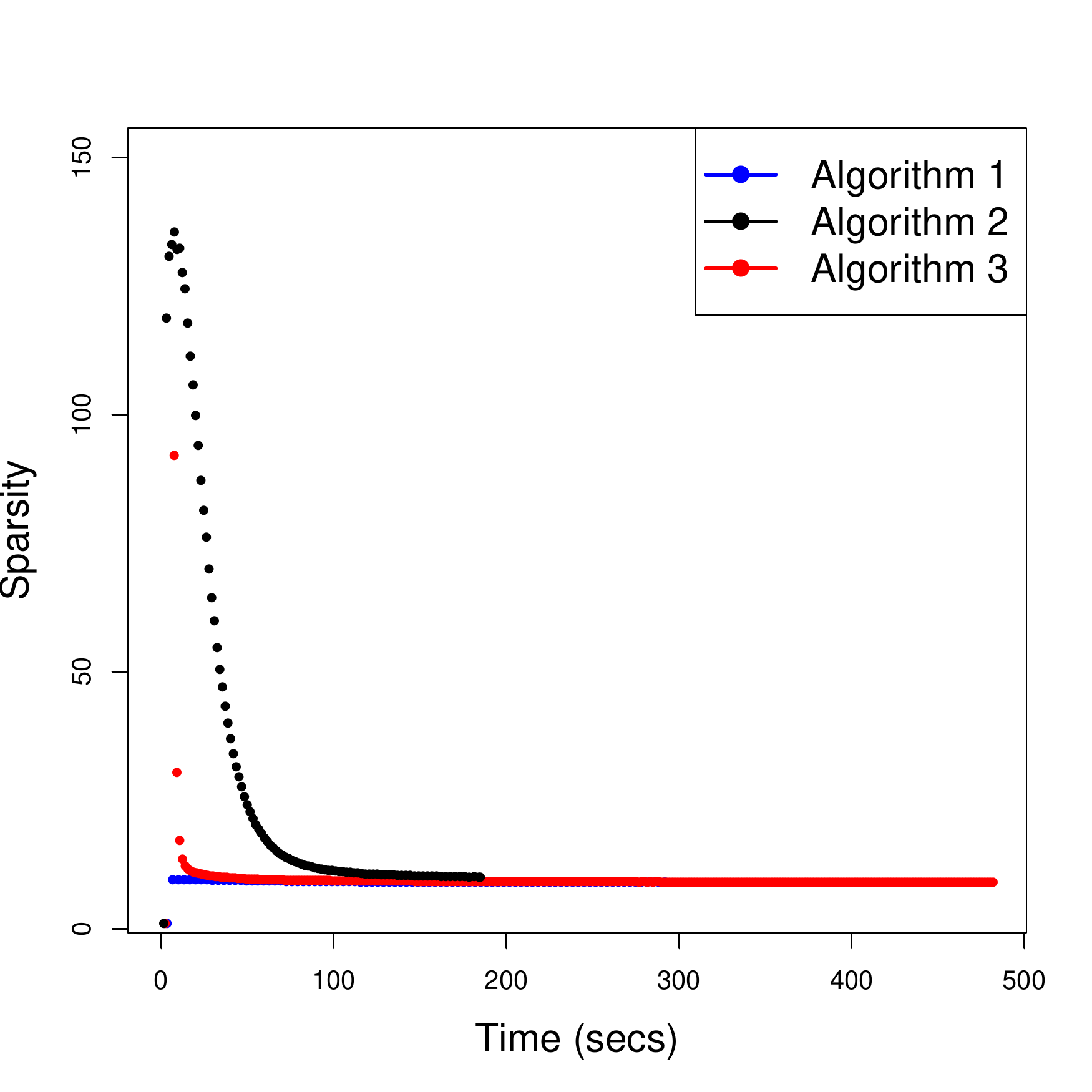}&
\includegraphics[width=0.32\textwidth,height=0.3\textheight,  trim = 1cm .1cm 0cm 1.5cm, clip = true]{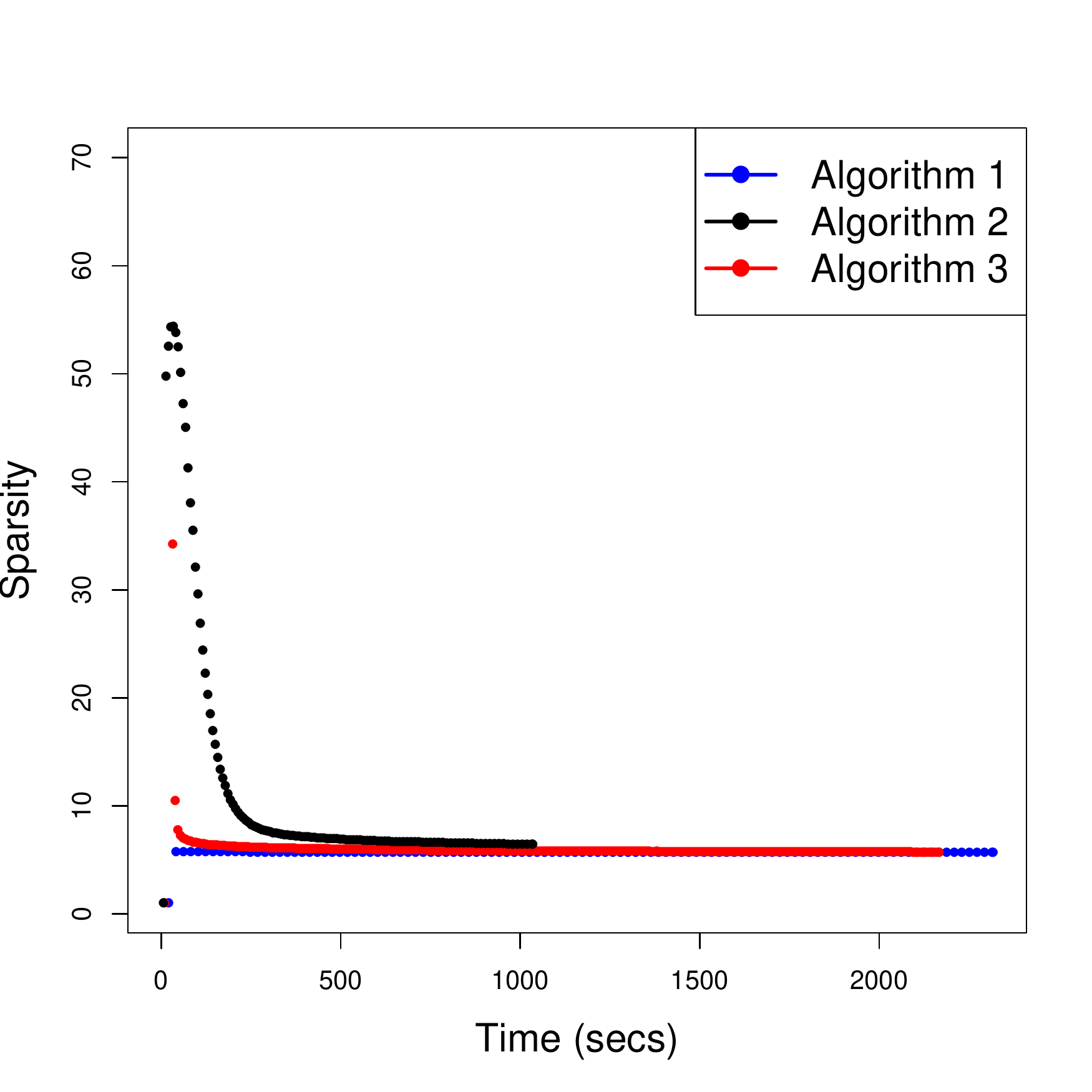} \\
\end{tabular}}
\caption{{\small{Evolution of relative error [top panel] and sparsity [bottom panel] of Algorithms~\ref{algo0}-\ref{algo2} versus time (in secs); for three different problem sizes: $p\in \{10^3, 5\times10^3, 10^4\}$ for the 
examples described in Section~\ref{sec:simdata}. We observe that for larger values of $p \geq 5\times10^3$, the new 
stochastic algorithms proposed in this paper: Algorithms~2 and 3 reach moderate accuracy solutions in times significantly smaller than the deterministic counterpart: Algorithm~1. Algorithm~3 reaches a low accuracy solution quicker, but 
is dominated by Algorithm~2 in obtaining a solution with higher accuracy. For small values of $p$ ($p=1000$) the different algorithms are comparable because direct matrix inversions are computationally less expensive, the situation changes quickly however, with larger values of $p$ (See also
Figure~\ref{fig-inv-chol-evd-1}). }} }\label{fig2:error}
\end{figure}

\subsubsection{Real dataset}
 The Patrick Brown dataset is an early example of an expression array, obtained from the Patrick Brown Laboratory
at Stanford University and was studied in~\cite{mazumder2012exact}. There are $n = 385$ patient samples of tissues from various regions of
the body (some from tumors, some not), with gene-expression measurements for $p = 4718$ genes.
For this example, the values of the regularization parameters were taken as
$(\alpha, \lambda) = (0.99,    0.16)$.
Here, splitting led to minor improvements since the size of the largest component was 4709, with all others having size one. We report the performance of our methods \emph{without} using the splitting method. 
We computed $\hat{\theta}$ by running the deterministic algorithm for 1000 iterations, using a step-size $\gamma=5\times 10^{-5}$. Unlike the synthetic experiments, in this case, we considered relative changes in objective values to determine the progress of the algorithm, namely,
$(\phi_{\alpha}(\theta_{k}) - \hat{\phi}_{\alpha})/ | \hat{\phi}_{\alpha}|,$ where, we define 
$\hat{\phi}_{\alpha} = \phi_{\alpha}(\hat{\theta})$; and recall that $\phi_{\alpha}(\cdot)$ is defined in~\eqref{defn-phi-obj}.

In this case, we also compared our method with \quic but the latter took a very long time in converging to even a moderate accuracy solution, so we took the solution delivered by its default mode as the reference solution $\hat{\theta}$. \quic took 5.9 hours to produce its default solution. Taking the objective value of this problem as the reference, we found that \quic took $6080.207$ and $10799.769$ secs to reach solutions with relative error $0.74$ and $0.50$ respectively.  We summarize the results in Table~\ref{tab:pat-brown}.

Our empirical findings confirm the theoretical results that for large $p$, the stochastic algorithms reach low-accuracy solutions much faster than the deterministic algorithms. We also see that the splitting rule helps, as it should --- major improvements are expected if the sizes of the connected
 components are significantly smaller than the original problem. 
  The sparsity plot (Figure \ref{fig2:error}) shows that the solution provided by Algorithm~\ref{algo1} tends to be noisy. The averaging step in estimating  $\theta_k^{-1}$ in Algorithm~\ref{algo2} makes these estimates much smoother, which results in solutions with good sparsity properties.

\subsection{Studying dense problems}\label{sec:num-dense-1}
We performed some experiments to demonstrate the performance of our method on dense inverse covariance estimation problems. 
Here, we took a sample of size $n= p$ with $p \in \{ 10^4 , 1.5 \times 10^4\}$, from a Gaussian density with independent covariates and mean zero. 
As described in Section~\ref{sec:ridge-regu}, it is indeed possible to obtain a closed form solution to this problem, but it requires performing a large scale eigen-decomposition 
on $S$, which can be quite expensive. In this application, proximal gradient algorithms and in particular the stochastic algorithms presented in this paper, become particularly useful.
They deliver approximate solutions to Problem~\ref{eq-ridge} in times that are orders of magnitude smaller than that taken to obtain an exact solution. 

In the experiments considered herein, we found the following scheme to be quite useful.
We took a subsample of size $m\ll n$ from the original $n$ samples and solved Problem~\eqref{eq-ridge} with a covariance matrix obtained from that subsample. 
This is indeed quite efficient since it requires computing the SVD of an $m \times p$ matrix, with $m \ll p$. We took the precision matrix and the covariance matrix associated with this 
subsample as a warm-start to the deterministic proximal gradient method, i.e., Algorithm~1 and Algorithm~2. 
This was seen to improve the overall run-time of the solution versus an initialization with a diagonal matrix.

We summarize our results in Table~\ref{tab:ridge-results}. 
For the case $p=10,000$ our Monte Carlo batch size was of $N_{k} = 1,000 + \lceil k^{1.4} \rceil$ and 
we took $\gamma = 0.1/\lambda^2_{\max}(S)$. The algorithms were warm-started with the solution of Problem~\eqref{eq-ridge} for a subsample of size $m=100$, which took $0.1$ seconds to compute.
For the case, $p=15,000$ we took $N_k= 2,000 + \lceil k^{1.4} \rceil$ and $\gamma$ as before.
 As a warm-start we took the solution of Problem~\eqref{eq-ridge} with a subsample of size $m=500$ which was obtained in 1 second.

\begin{table}[]
\begin{center}
\resizebox{.95\textwidth}{!}{\begin{tabular}{ c ccc  ccc c  } \hline
& \multicolumn{7}{c}{Time (in secs) taken by algorithms}\\
 Accuracy & \multicolumn{2}{c}{\bf Algorithm~1}& \multicolumn{2}{c}{\bf Algorithm~2} &  \multicolumn{2}{c}{\bf Algorithm~3} & \textsc{ {Quic}} \\
Tol     &      No Splitting          & With Splitting            &   No Splitting          & With Splitting  &  No Splitting          & With Splitting & \\ \hline
 \multicolumn{8}{c}{$p=5000$}\\
 &&&&&&&\\
$10^{-1}$ & 125.78 & 122.16 & 62.73 & 60.84 & 56.78  & 53.31 &300.94 \\
$2\times10^{-2}$ &  251.92&241.49 &  161.18 &  142.83  &292.34    & 271.67  &350.29 \\ \hline  \\
 \multicolumn{8}{c}{$p=10,000$}\\
 &&&&&&&\\
$10^{-1}$ &  921.52  &612.11      &317.35 &155.33 & 289.58 &192.28 & 2046.73  \\
$2 \times 10^{-2}$& 1914.65  &1305.65 &766.69 & 463.66 &647.27  & 563.42  &2373.03 \\ \hline \\
\end{tabular}}
\caption{\small{Table showing the times (in secs) to reach an Accuracy of ``Tol" for different algorithms, where, Accuracy refers to 
$\|\theta_{k} - \hat{\theta}\|_{F}/\|\hat{\theta}\|_{F}$.
Algorithms~2 and 3 clearly shine over the deterministic method (Algorithm~1) for delivering moderate accuracy solutions.
Algorithm~3 reaches a solution of moderate accuracy faster than Algorithm~2 and Algorithm~1; for smaller values of ``Tol'' Algorithm~2 wins.
Here, splitting, which refers to the notion of covariance thresholding described in Section~\ref{sec:thres} is found to help, though not substantially --- the regularization parameters in this problem lead to connected components of sizes comparable to the original problem.
The timings of \quic are shown for reference purposes only, to get an idea of the times taken by state-of-the art algorithms.
For reference, the times taken by \quic to converge to its default convergence criteria were 501 seconds for $p=5,000$  and 
3020 seconds for $p=10,000$.}}\label{table-comp-quic-etc}
\end{center}
\end{table}

\begin{table}[h]
\centering
\scalebox{.9}{\begin{tabular}{c c c c}
  \hline
 Accuracy & \multicolumn{3}{c}{Time (in secs) taken by algorithms}\\
  Tol & \bf Algorithm~1 &\bf Algorithm~2 & \bf Algorithm~3  \\  \hline
 0.1 & 881.995  & 366.864 & 451.337 \\ 
 0.02 & 2030.405  & 942.924 & $>$ 654 \\  
   \hline \\
\end{tabular}}\caption{\small{Results on the Patrick Brown microarray dataset (here, $n = 385$ and $p = 4718$). Algorithm~3 reached a solution of relative accuracy $0.06$ within the first 500 iterations which took a total time of 654 seconds. 
Here, we use ``Accuracy'' to denote the relative error: $(\phi_{\alpha}(\theta_{k}) - \hat{\phi}_{\alpha})/ | \hat{\phi}_{\alpha}|,$ where, $\hat{\phi}_{\alpha}$ is the optimal objective value for the problem.
For comparison, \quic for the same dataset when set to optimize the corresponding graphical lasso problem with the same tuning parameter, took 5.9 hours to converge to a solution with the native (default) tolerance criterion. Taking the objective value of this problem as the reference, we found that \quic took approximately, $6080$ secs ($\sim 1.7$ hours ) and $10800$ secs ($\sim 3$ hours) to reach solutions with relative errors $0.74$ 
and $0.50$ respectively. The algorithms presented in this paper show impressive performance for the particular tasks at hand. }}\label{tab:pat-brown}
\end{table}

\begin{table}[h]
\centering
\scalebox{.9}{\begin{tabular}{c c c c c}
  \hline
Accuracy  & & \multicolumn{2}{c}{Time (in secs) taken by algorithms}\\
  Tol & p & \bf Algorithm~1 & \bf Algorithm~2 \\  \hline
 0.1 & $10^4$ &  15.42 & 4.67  \\
 0.05 & $10^4$ &  93.46  & 48.490   \\ \\
 0.1 & $1.5 \times 10^4$ &  50.78 & 15.70\\
 0.05 & $1.5 \times 10^4$ & 408.87  & 176.11   \\
   \hline \\
\end{tabular}}
\caption{Results for ridge regression. Here, we use ``Accuracy'' to denote the measure: $(\phi_{\alpha}(\theta_{k}) - \hat{\phi}_{\alpha})/ | \hat{\phi}_{\alpha}|,$ where, $\hat{\phi}_{\alpha}$ is the optimal objective value for the problem.
For $p=10,000$ computing the exact solution (using a full eigen-decomposition) took 140 secs, for $p=15,000$ the exact solution was computed in 500 secs. Both Algorithms~1 and 2 obtained  
approximate solutions in times significantly smaller than computing the exact solution to the problem. For details see Section~\ref{sec:num-dense-1}.}\label{tab:ridge-results}
\end{table}

\section{Proofs}\label{sec:proofs}
This section gathers the proofs and technical details appearing in the paper.

\subsection{Proof of Lemma~\ref{lem:spec-bounds-1} }\label{proof-lem:spec-bounds-1}

\begin{proof}~\\

\noindent {\bf{Uniqueness of $\hat{\theta}$:}}

\smallskip

If $\lambda_{2} > 0$ then Problem~\eqref{eq-glasso-enet-1} is strongly convex due to the presence of the quadratic regularizer, hence $\hat{\theta}$ is unique. 
If $\lambda_{2} = 0$ and $\lambda_{1} >0$ then Problem~\eqref{eq-glasso-enet-1} becomes equivalent to~\Glasso for which uniqueness of $\hat{\theta}$ was established in~\cite{BGA2008,Lu:09}.


\noindent {\bf{Spectral bounds on $\hat{\theta}$:}}

\smallskip

Consider the stationary conditions of Problem~\eqref{eq-glasso-enet-1}:
\begin{equation}\label{line-pf-1}
-\hat{\theta}^{-1} + S + \lambda_{1} Z + 2\lambda_{2} \hat{\theta} = 0, 
\end{equation}
where, we use the notation: $Z = \sign(\hat{\theta})$, $\lambda_{1} = \alpha \lambda$ and $\lambda_{2} = (1-\alpha)\lambda/2$.
It follows from~\eqref{line-pf-1} that
\begin{equation}
\begin{aligned}\label{line-pf-2}
\hat{\theta}^{-1} - 2\lambda_{2}\hat{\theta} =& S + \lambda_{1} Z\\
 \leq&  \| S + \lambda_{1} Z \|_{2} \mm{I}  \\
 \leq&   \left(\| S\|_{2} + \lambda_{1} \|Z \|_{2}\right) \mm{I}  \\
 \leq&  \left(\| S\|_{2} + \lambda_{1} p\right) \mm{I} \;\;\;\; \text{(since, $z_{ij} \in [-1,1]$ implies $\|Z \|_{2} \leq p$)}
\end{aligned}
\end{equation}
If $\sigma_{i}$'s denote the eigen-values of $\hat{\theta}$ then it follows from~\eqref{line-pf-2}:
$$ 1/\sigma_{i} - 2\lambda_{2}\sigma_{i} \leq \| S\|_{2} + \lambda_{1} p = \mu. $$
Using elementary algebra, the above provides us a lower bound on all the eigen-values of  the optimal solution $\hat{\theta}$:
$\sigma_{i} \geq  (-\mu + \sqrt{\mu^2 + 8 \lambda_{2}})/(4 \lambda_{2})$ for $\lambda_{2} \neq 0$, for all $i = 1, \ldots, p$. 
Note that for the case, $\lambda_{2} =0$ we have $ \sigma_{i} \geq 1/\mu$ for all $i$. Combining these results we have the following:
\begin{equation*}
\lambda_{\min}(\hat{\theta}) \geq \ell_{\star} :=  \begin{cases} 
\frac{-\mu + \sqrt{\mu^2 + 8 \lambda_{2}}}{4 \lambda_{2}} &\text{if $\lambda_{2} \neq 0$}\\
\frac{1}{\mu} & \text{otherwise},
\end{cases}
\end{equation*}
which completes the proof of the lower bound on the spectrum of $\hat{\theta}$.

We now proceed towards deriving upper bound on the eigen-values of $\hat{\theta}$.

From~\eqref{line-pf-1} we have:
\begin{equation}\label{line-pf-3}
0 = \langle \hat{\theta},  -\hat{\theta}^{-1} + S + \lambda_{1} Z + 2\lambda_{2} \hat{\theta} \rangle \implies \lambda_{1} \| \hat{\theta} \|_{1} = p - \langle \hat{\theta}, S\rangle - 2\lambda_{2} \normfro{\hat{\theta}}^2 
\end{equation}
Now observe that: 
\begin{equation}\label{line-pf-301}
\langle \hat{\theta}, S \rangle \geq \lambda_{\min}(\hat{\theta}) \tr(S) \;\;\; \text{and} \;\;\; \normfro{\hat{\theta}}^2 \geq p \lambda^2_{\min}(\hat{\theta}).
\end{equation}
We use $\ell_{\star}$ as a lower bound for $\lambda_{\min}(\hat{\theta})$ and use~\eqref{line-pf-301} in~\eqref{line-pf-3} to arrive at:
\begin{equation}\label{line-pf-4}
 \|\hat{\theta}\|_{1} \leq \frac{1}{\lambda_{1}} \left(p - \ell_{\star} \tr(S) - 2p\lambda_{2} \ell_{\star}^2 \right) := U_{1}
\end{equation}
The above bound can be tightened by adapting the techniques appearing in~\cite{Lu:09} for the special case $\lambda_{2}=0$; as we discuss below.
Let $\hat{\theta}(t) := ( S + t \lambda_{1} \mm{I})^{-1}$ be a family of matrices defined on $t \in (0,1)$. 
It is easy to see that 
$$\hat{\theta}(t) \in \argmin_{{\theta}} \left \{ -\log\det({\theta})  + \langle S + t \lambda_{1} \mm{I}, {\theta} \rangle \right\},$$ which leads to 
\begin{equation}\label{line-pf-5}
\begin{aligned}
-\log\det(\hat{\theta}(t))  + \langle S + t \lambda_{1} \mm{I}, \hat{\theta}(t) \rangle &\leq& -\log\det(\hat{\theta})  + \langle S + t \lambda_{1} \mm{I}, \hat{\theta} \rangle&  \\
-\log\det(\hat{\theta})  + \langle S , \hat{\theta} \rangle  + \lambda_{1} \| \hat{\theta}\|_{1} + \lambda_{2} \normfro{\hat{\theta}}^2 &\leq& 
-\log\det(\hat{\theta}(t))  + \langle S , \hat{\theta}(t) \rangle  &\\
&&+ \lambda_{1} \| \hat{\theta}(t)\|_{1} + \lambda_{2} \normfro{\hat{\theta}(t)}^2,&
\end{aligned}
\end{equation}
where, the second inequality in~\eqref{line-pf-5} follows from the definition of $\hat{\theta}$.
Adding the two inequalities in~\eqref{line-pf-5} and doing some simplification, we have:
$$\lambda_{1} \| \hat{\theta}(t) \|_{1} - t \lambda_{1} \tr(\hat{\theta}(t)) + \lambda_{2} \normfro{\hat{\theta}(t)}^2 -\lambda_{2}\normfro{\hat{\theta}}^2 \geq \lambda_{1} \| \hat{\theta} \|_{1} - t\lambda_{1} \tr(\hat{\theta}) \geq (\lambda_{1} - t \lambda_{1}) \|\hat{\theta}\|_{1},$$
where, the rhs of the above inequality was obtained by using the simple observation $ \tr(\hat{\theta}) \leq \| \hat{\theta}\|_{1}$.
 Dividing both sides of the above inequality by $\lambda_{1} - t\lambda_{1}$ we have:
\begin{equation}\label{line-pf-6}
\| \hat{\theta}\|_{1} \leq \underbrace{\frac{1}{\lambda_{1}(1 - t)} \left(  \lambda_{1} \| \hat{\theta}(t) \|_{1} - t \lambda_{1} \tr(\hat{\theta}(t)) + \lambda_{2} \normfro{\hat{\theta}(t)}^2 \right)}_{:=a(t)} -  \underbrace{\frac{\lambda_{2}\normfro{\hat{\theta}}^2 }{\lambda_{1}(1 - t)}}_{:=b(t)}.
\end{equation}
Observing that $ \normfro{\hat{\theta}}^2 \geq \ell_{\star}^2p$ and applying it to~\eqref{line-pf-6} we obtain: 
\begin{equation} \label{line-pf-61}
\| \hat{\theta}\|_{1} \leq \left ( a(t) - \tilde{b}(t) \right),
\end{equation}
where, 
$a(t) = \frac{1}{\lambda_{1}(1 - t)} \left(  \lambda_{1} \| \hat{\theta}(t) \|_{1} - t \lambda_{1} \tr(\hat{\theta}(t)) + \lambda_{2} \normfro{\hat{\theta}(t)}^2 \right)$ and
$\tilde{b}(t):=\frac{\lambda_{2}  \ell_{\star}^2p }{\lambda_{1}(1 - t)}$.

Inequality~\eqref{line-pf-61} in particular implies:
\begin{equation} 
\| \hat{\theta} \|_{1} \leq \inf_{t \in (0,1)}  \left ( a(t) - \tilde{b}(t) \right)  := U_{2}
\label{line-pf-63}
\end{equation}
where, the minimization problem appearing above is a one dimensional optimization and can be approximated quite easily.
While a closed form solution to the minimization problem in~\eqref{line-pf-63} may not be available, 
$\| \hat{\theta} \|_{1}$ can be (upper) bounded by specific evaluations of $a(t) - \tilde{b}(t)$ at different values of $t \in (0,1)$.
In particular, note that if $S$ is invertible then, taking $t \approx 0+$ we get:
$$\| \hat{\theta} \|_{1} \leq \left(  \| S^{-1} \|_{1} + \frac{\lambda_{2}}{\lambda_{1}} \normfro{S^{-1}}^2 \right) - \frac{\lambda_{2} \ell^2_{LB}}{\lambda_{1}}, $$
otherwise, taking $t = \frac12$ leads to: $ \|\hat{\theta}\|_{1} \leq a(\frac12) - \tilde{b}(\frac{1}{2}).$

Combining~\eqref{line-pf-4} and~\eqref{line-pf-63}, we arrive at the following bound:
\begin{equation}\label{line-pf-7}
\| \hat{\theta} \|_{1} \leq \min \left \{   U_{1} ,  U_{2} \right \}
\end{equation}
Now observe that:
$$ \lambda_{\max}(\hat{\theta}) := \| \hat{\theta} \|_{2} \leq \normfro{\hat{\theta}} \leq \|\hat{\theta}\|_{1} \leq  \min \left \{   U_{1} ,  U_{2} \right \} := \psi_{UB}. $$ 
\end{proof}

\subsection{Proof of Lemma \ref{keylem}}\label{proof:keylem}
\begin{proof}

\noindent {\bf{First Part: Lower bound on $\lambda_{\min}(\theta_{j})$}} \\

Set $\bar\theta=\theta-\gamma(S-\theta^{-1})$. By definition, $T_\gamma(\theta;\alpha)=\argmin_{u\in\M}\left[g_\alpha(u)+\frac{1}{2\gamma}\normfro{u-\bar\theta}^2\right]$.
By the optimality condition of this optimization problem, there exists $Z\in\M$ in the sub-differential of the function $\theta\mapsto \|\theta\|_1$ at $T_\gamma(\theta;\alpha)$ such that $Z_{ij}\in [-1,1]$, $\pscal{Z}{T_\gamma(\theta;\alpha)}=\|T_\gamma(\theta;\alpha)\|_1$, and
\begin{equation}\label{optimality}
\frac{1}{\gamma}(T_\gamma(\theta;\alpha)-\bar\theta) +\alpha\lambda_{} Z +(1-\alpha)\lambda_{}T_\gamma(\theta;\alpha)=0.\end{equation}
The fact that $Z_{ij}\in [-1,1]$ implies that $\|Z\|_2\leq  \normfro{Z}\leq p$. Hence,
\begin{equation}\label{bound:Z}
\lambda_\textsf{min}(Z)\geq -p,\;\;\mbox{ and }\;\; \lambda_\textsf{max}(Z)\leq p.\end{equation}
Using $\bar\theta=\theta-\gamma(S-\theta^{-1})$, we expand (\ref{optimality}) to 
\[T_\gamma(\theta;\alpha)=\left(1+(1-\alpha)\lambda_{}\gamma\right)^{-1}\left(\theta-\gamma(S-\theta^{-1} +\alpha\lambda_{} Z)\right).\]
We write $\theta-\gamma(S-\theta^{-1} +\alpha\lambda_{} Z)=\theta+\gamma \theta^{-1} -\gamma(S+\alpha\lambda_{} Z)$. We will use the fact that for any symmetric matrices $A,B$, $\lambda_\textsf{min}(A+B)\geq \lambda_\textsf{min}(A) +\lambda_\textsf{min}(B)$,  $\lambda_\textsf{max}(A+B)\leq \lambda_\textsf{max}(A) +\lambda_\textsf{max}(B)$ (see e.g. \cite{golub:vl}~Theorem 8.1.5). In view of (\ref{bound:Z}) we have:
\begin{equation}\label{pf-yves-11}
\lambda_{\textsf{min}}\left(\theta-\gamma(S-\theta^{-1} +\alpha\lambda_{} Z)\right)\geq \lambda_{\textsf{min}}(\theta+\gamma\theta^{-1}) -\gamma\left(\lambda_{\textsf{max}}(S)+\alpha\lambda_{} p\right).
\end{equation}

Notice that the function $x\mapsto x+\frac{\gamma}{x}$ is increasing on $[\sqrt{\gamma},\infty)$, and by assumption $\epsilonmod_\star\geq \sqrt{\gamma}$. Therefore, if 
$\lambda_{\min}(\theta) \geq \ell_{\star}$,
we use the eigen-decomposition of $\theta$ to conclude that 
\begin{equation}\label{pf-yves-21}
\lambda_{\textsf{min}}\left(\theta +\gamma\theta^{-1}\right) =\lambda_{\textsf{min}}(\theta) +\frac{\lambda}{\lambda_{\textsf{min}}(\theta)}\geq \epsilonmod_\star+\frac{\gamma}{\epsilonmod_\star}.
\end{equation}
 Hence
 \begin{equation}\label{pf-yves-22}
 \lambda_{\textsf{min}}\left(T_\gamma(\theta;\alpha)\right) \geq \left(1+(1-\alpha)\lambda_{}\gamma\right)^{-1}\left[\epsilonmod_\star +\frac{\gamma}{\epsilonmod_\star}-\gamma(\lambda_{\textsf{max}}(S)+\alpha\lambda_{}p)\right]= \epsilonmod_\star,
 \end{equation}
 where the last equality uses the fact that $\ell_\star$ satisfies the equation
\[ (1-\alpha)\lambda_{}\epsilonmod_\star^2  + (\lambda_{\textsf{max}}(S) + \alpha\lambda_{} p)\epsilonmod_\star -1=0.\]

\medskip
\medskip

\noindent {\bf{Second Part: Upper bound on $\lambda_{\max}(\theta_{j})$}} \\

We will first show that if $\psi_\star^1\leq \psi_{UB}$,  then $\lambda_{\max}(\theta_{j}) \leq  \kappamod^1_{\star}$ for all $j \geq 1$. Following arguments similar to that used to arrive at~\eqref{pf-yves-11}, we have:
\begin{equation}\label{pf-yves-ub-11}
\lambda_{\textsf{max}}\left(\theta-\gamma(S-\theta^{-1} +\alpha\lambda_{} Z)\right)\leq \lambda_{\textsf{max}}(\theta+\gamma\theta^{-1}) -\gamma\left(\lambda_{\textsf{min}}(S) - \alpha\lambda_{} p\right).
\end{equation}

Using $\lambda_{\max}(\theta) \leq \kappamod^1_\star$; and
following arguments used to arrive at~\eqref{pf-yves-21},~\eqref{pf-yves-22} we have:
\[\lambda_{\textsf{max}}\left(\theta +\gamma\theta^{-1}\right) =\lambda_{\textsf{max}}(\theta) +\frac{\lambda}{\lambda_{\textsf{max}}(\theta)}\leq \kappamod_\star^1+\frac{\gamma}{\kappamod_\star^1}.\]
 Hence
 \[\lambda_{\textsf{max}}\left(T_\gamma(\theta;\alpha)\right) \leq \left(1+(1-\alpha)\lambda_{}\gamma\right)^{-1}\left[\kappamod_\star^1 +\frac{\gamma}{\kappamod_\star^1} - \gamma(\lambda_{\textsf{min}}(S) - \alpha\lambda_{}p)\right]= \kappamod_\star^1,\]
 where the last equality uses the fact that when $\psi_\star^1<\infty$, it satisfies the equation
\[(1-\alpha)\lambda_{} \kappamod_\star^2 +\left(\lambda_{\textsf{min}}(S)-\alpha\lambda_{} p\right)\kappamod_\star -1=0.\]

We now consider the case where, $\kappamod^1_{\star}>\psi_{UB}$, and $\theta_0\in\M_+(\ell_\star,\psi_{UB})$. The first part of the proof guarantees that $\theta_j\in\M_+(\ell_\star,+\infty)$ for all $j\geq 0$. For $j\geq 1$, by Lemma \ref{lem4} applied with $\ell=\ell_\star$, $\psi=+\infty$, $\theta=\theta_{j-1}$, and $H=\theta_{j-1}^{-1}$, we get
\[\normfro{\theta_j-\thetaen}\leq \normfro{\theta_{j-1}-\thetaen}.\]
This implies that for any $j\geq 1$,
\begin{eqnarray*} \|\theta_j\|_2 &\leq & \|\thetaen\|_2 +\|\theta_j-\thetaen\|_2\\
&\leq & \|\thetaen\|_2 + \normfro{\theta_0-\thetaen}\\
&\leq & \psi_{UB} + \sqrt{p}(\psi_{UB} -\ell_\star).\end{eqnarray*}

where the last inequality uses Weyl's inequality since $\theta_0,\thetaen\in\M_+(\ell_\star,\psi_{UB})$.

\end{proof}

\subsection{Proof of Theorem \ref{thm1}}\label{proof:thm1}
\begin{proof}
We follow closely the proof of Theorem 3.1. of \cite{fista-09}. Suppose that the sequence $\{\theta_i,\;0\leq i\leq k\}$ belongs to $\M_+(\epsilonmod,\kappamod)$. For any $i\geq 0$, since $\theta_{i+1} =\Prox_\gamma(\theta_i-\gamma(S-\theta_i^{-1});\alpha)$, we apply Lemma \ref{lem4} with $H=\theta_i^{-1}$ to obtain
\[\normfro{\theta_{i+1}-\thetaen}^2\leq 2\gamma \left(\phi_\alpha(\theta_{i+1})-\phi_\alpha(\thetaen)\right)  + \normfro{\theta_{i+1}-\thetaen}^2\leq \left(1-\frac{\gamma}{\kappamod^2}\right)\normfro{\theta_i-\thetaen}^2,\]
which implies that 
\begin{equation}\label{thm1:eq6}
2\gamma \left(\phi_\alpha(\theta_{k})-\phi_\alpha(\thetaen)\right) + \normfro{\theta_{k}-\thetaen}^2\leq \left(1-\frac{\gamma}{\kappamod^2}\right)^k\normfro{\theta_0-\thetaen}^2.\end{equation}
Again, from  (\ref{thm1:eq5}), we have
\[\phi_\alpha(\theta_{i+1})-\phi_\alpha(\thetaen)\leq  \frac{1}{2\gamma}\left[\normfro{\theta_i-\thetaen}^2 -\normfro{\theta_{i+1}-\thetaen}^2\right].\]
We then sum for $i=0$ to $k-1$ to obtain
\begin{equation}\label{prop1_eq5}
2\gamma\sum_{i=1}^k \left\{ \phi_\alpha(\theta_{i})-\phi_\alpha(\thetaen)\right\}  + \normfro{\theta_k-\hat\theta_0}^2 \leq \normfro{\theta_0-\thetaen}^2.\end{equation}
We now use  Lemma \ref{lem3} to write
\begin{eqnarray*}
g_\alpha(\theta_{i+1}) -g_\alpha(\theta_i) &\leq & \frac{1}{\gamma}\pscal{\theta_i-\theta_{i+1}}{\theta_{i+1}-\left(\theta_i-\gamma(S-\theta_i^{-1})\right)},\\
& = &-\frac{1}{\gamma}\normfro{\theta_{i+1}-\theta_i}^2 + \pscal{\theta_i-\theta_{i+1}}{S-\theta_i^{-1}}.
\end{eqnarray*}
This last inequality together with (\ref{prop1_eq1}) applied with $\bar\theta=\theta_{i+1}$ and $\theta=\theta_i$, yields
\begin{equation}\label{prop1_eq6}
\left\{\phi_\alpha(\theta_{i+1})-\phi_\alpha(\thetaen)\right\} \leq \left\{\phi_\alpha(\theta_{i})-\phi_\alpha(\thetaen) \right\}-\frac{1}{2\gamma}\normfro{\theta_i-\theta_{i+1}}^2.\end{equation}
By multiplying both sides of the last inequality by $i$ and summing from $0$ to $k-1$, we obtain
\begin{eqnarray*}
k\left\{\phi_\alpha(\theta_{k})-\phi_\alpha(\thetaen) \right\} &\leq &\sum_{i=1}^{k}\left\{\phi_\alpha(\theta_{i})-\phi_\alpha(\thetaen) \right\} -\frac{1}{2}\sum_{i=0}^{k-1}\frac{i}{\gamma}\normfro{\theta_i-\theta_{i+1}}^2\\
& \leq &\sum_{i=1}^{k} \left\{\phi_\alpha(\theta_{i})-\phi_\alpha(\thetaen) \right\}.\end{eqnarray*}
Hence, given (\ref{prop1_eq5}), we have
\[\left\{\phi_\alpha(\theta_{k})-\phi_\alpha(\thetaen) \right\}\leq \frac{1}{2\gamma k}\normfro{\theta_0-\thetaen}^2,\]
which together with (\ref{thm1:eq6}) yields the stated bound.
\end{proof}

\subsection{Proof of Theorem \ref{thm3}}\label{proof:thm3}
\begin{proof}
Write $\tau_\epsilon = \tau(\epsilonmod_\star(\epsilon),\kappamod^1_\star(\epsilon))$.
\[\PP\left[\tau_\epsilon=\infty\right]=1-\sum_{j=1}^\infty \PP\left[\tau_\epsilon=j\right],\]
and
\[\PP\left[\tau_\epsilon=j\right]=\PP\left[\left(\lambda_{\textsf{min}}(\theta_{j})<  \epsilonmod_\star(\epsilon) \mbox{ or } \lambda_{\textsf{max}}(\theta_j)>\kappamod^1_\star(\epsilon)\right),\;\tau_\epsilon>j-1\right].\]
Now we proceed as in the proof of Lemma \ref{keylem}. Given $\theta_{j-1}$, the optimality condition (\ref{optimality}) becomes: there exists a matrix $\Delta_j$, all entries of which belong to $[-1,1]$ (that can be taken as $\textsf{sign}(\theta_j)$), such that 
\[\theta_j=\left(1+(1-\alpha)\lambda_{}\gamma\right)^{-1}\left(\theta_{j-1} +\gamma\theta_{j-1}^{-1} -\gamma(S+(\theta_{j-1}^{-1}-G_j) + \alpha \lambda_{} \Delta_j)\right).\]
As in the proof of Lemma \ref{keylem} we have, 
\begin{multline*}
\lambda_{\textsf{max}}(S+ (\theta_{j-1}^{-1}-G_j) + \lambda \Delta_j) \leq \lambda_{\textsf{max}}(S) + p\|\theta_{j-1}^{-1}-G_j\|_\infty +p\lambda_{},\\
\mbox{ and }\;\;\;\; \lambda_{\textsf{min}}(S+ (\theta_{j-1}^{-1}-G_j) + \lambda \Delta_j) \geq \lambda_{\textsf{min}}(S) - p\|\theta_{j-1}^{-1}-G_j\|_\infty -p\lambda_{}.\end{multline*}
 where for $A\in \M$, $\|A\|_\infty\eqdef \max_{i,j}|A_{ij}|$. Therefore, with the same steps as in the proof of Lemma \ref{keylem}, we see that on the event $\{\tau>j-1, \|G_j-\theta_{j-1}^{-1}\|_\infty\leq \epsilon\}$, $\lambda_{\textsf{min}}(\theta_j)\geq \epsilonmod_\star(\epsilon)$, and $\lambda_{\textsf{max}}(\theta_j)\leq \kappamod^1_\star(\epsilon)$. We conclude that,
\[\PP\left[\tau_\epsilon=j\right] \leq \PP\left[\tau_\epsilon=j\vert \tau_\epsilon>j-1\right]\leq \PP\left[\|G_j - \theta_{j-1}^{-1}\|_\infty> \epsilon\vert \tau_\epsilon>j-1\right].
\]
We prove in Lemma \ref{lem:exp:bound} the exponential bound
\[\PP\left[\|G_j - \theta_{j-1}^{-1}\|_\infty> \epsilon \vert \lambda_{\textsf{min}}(\theta_{j-1})\geq \epsilonmod_\star(\epsilon)\right]\leq 8p^2\exp\left(-\min(1,\epsilonmod^2_\star(\epsilon)\epsilon^2/16) N_{j-1}\right).\]
Hence
\begin{equation}\label{lem:sto:eq1}
\PP\left[\tau_\epsilon=\infty\right] \geq 1- 8p^2 \sum_{j\geq 1} \exp\left(-\min(1,\epsilonmod^2_\star(\epsilon)\epsilon^2/16) N_{j-1}\right).\end{equation}
We will now show that there exists a random variable $\Psi_\star(\epsilon)$ such that on $\{\tau_\epsilon = +\infty\}$, $\lambda_{\textsf{max}}(\theta_j)\leq \Psi_\star(\epsilon)$ for all $j\geq 0$. 

We first note that on $\{\tau_\epsilon>k\}$, $\theta_0,\ldots,\theta_k\in\M_+(\epsilonmod,\kappamod)$, and $\theta_{j} = \Prox_\gamma\left(\theta_{j-1}-\gamma(S-\Sigma_{j};\alpha)\right)$ for $j=1,\ldots,k$. We then apply Lemma \ref{lem4} with $\theta=\theta_{j-1}$, $\bar\theta=\theta_j$ and $H=\Sigma_j$, to write 
\begin{multline*}
\normfro{\theta_{j}-\thetaen}^2\leq \normfro{\theta_{j}-\thetaen}^2 +2\gamma\left\{\phi_\alpha(\theta_j)-\phi_\alpha(\hat\theta)\right\} \\
\leq \left(1-\frac{\gamma}{\kappamod^2}\right)\normfro{\theta_{j-1}-\thetaen}^2 - 2\gamma\pscal{\thetaen-\theta_j}{\Sigma_j-\theta_{j-1}^{-1}}.\end{multline*}
We multiply both sides by $\textbf{1}_{\{\tau_\epsilon>j-1\}}$ and uses the fact that $\textbf{1}_{\{\tau_\epsilon>j-1\}}=\textbf{1}_{\{\tau_\epsilon=j\}} + \textbf{1}_{\{\tau_\epsilon>j\}}$ to write
\begin{multline}\label{thm4:eq1}
\textbf{1}_{\{\tau_\epsilon>j\}}\normfro{\theta_{j}-\thetaen}^2  \leq \left(1-\frac{\gamma}{\kappamod^2}\right)\textbf{1}_{\{\tau_\epsilon>j-1\}}\normfro{\theta_{j-1}-\thetaen}^2 \\
- 2\gamma\textbf{1}_{\{\tau_\epsilon>j-1\}}\pscal{\thetaen-\theta_j}{\Sigma_j-\theta_{j-1}^{-1}}.
\end{multline} 
Recall that $\theta_j=\Prox_{\gamma}\left(\theta_{j-1}-\gamma (S-\Sigma_j);\alpha\right)$, and split $\thetaen-\theta_j$ as 
\begin{equation}\label{thm4:split:eq}
\thetaen-\theta_j = \thetaen-T_{\gamma}(\theta_{j-1};\alpha) +T_{\gamma}(\theta_{j-1};\alpha) -\theta_j,\end{equation}
where $T_{\gamma}(\theta_{j-1};\alpha)=\Prox_{\gamma}\left( \theta_{j-1}-\gamma(S-\theta_{j-1}^{-1});\alpha\right)$.  It is well known that the proximal operator is non-expansive---see \cite[Propositions~12.26 and ~12.27]{bauschke:combettes:2011}. Hence
\begin{multline*}
\left|\pscal{T_{\gamma}(\theta_{j-1};\alpha) -\theta_j}{\Sigma_j-\theta_{j-1}^{-1}}\right|\leq \normfro{T_{\gamma}(\theta_{j-1};\alpha) -\theta_j}\normfro{\Sigma_j-\theta_{j-1}^{-1}}\\
\leq \gamma\normfro{\Sigma_j-\theta_{j-1}^{-1}}^2.\end{multline*}
We then set $V_j \eqdef \textbf{1}_{\{\tau_\epsilon>j-1\}} \pscal{\thetaen-T_{\gamma}(\theta_{j-1};\alpha)}{\Sigma_j-\theta_{j-1}^{-1}}$, and use the last inequality, (\ref{thm4:split:eq}), and (\ref{thm4:eq1}) to deduce that
\begin{multline}\label{thm4:eq2}
\textbf{1}_{\{\tau_\epsilon>j\}}\normfro{\theta_{j}-\thetaen}^2  \leq \left(1-\frac{\gamma}{\kappamod^2}\right)\textbf{1}_{\{\tau_\epsilon>j-1\}}\normfro{\theta_{j-1}-\thetaen}^2 \\
- 2\gamma V_j +2\gamma^2 \textbf{1}_{\{\tau_\epsilon>j-1\}}\normfro{\Sigma_j-\theta_{j-1}^{-1}}^2.
\end{multline} 
Summing (\ref{thm4:eq2}) for $j=1$ to $k$ yields
\begin{eqnarray}\label{thm4:eq22}
\sup_{k\geq 0} \textbf{1}_{\{\tau_\epsilon>k\}}\normfro{\theta_{k}-\thetaen}^2 & \leq &  \normfro{\theta_0-\thetaen}^2 + 2\gamma\sup_{k\geq 1}\left|\sum_{j=1}^k V_j\right| \nonumber\\
&&+2\gamma^2 \sum_{j=1}^\infty \textbf{1}_{\{\tau_\epsilon>j-1\}}\normfro{\Sigma_j-\theta_{j-1}^{-1}}^2,\nonumber\\
&  = & \normfro{\theta_0-\thetaen}^2 + \zeta,\end{eqnarray}
where $\zeta \eqdef 2\gamma\sup_{k\geq 1}\left|\sum_{j=1}^k V_j\right| +2\gamma^2 \sum_{j=1}^\infty \textbf{1}_{\{\tau_\epsilon>j-1\}}\normfro{\Sigma_j-\theta_{j-1}^{-1}}^2$. The bound (\ref{thm4:eq22}) in turn means that on the event $\{\tau_\epsilon=\infty\}$, for all $j\geq 0$,
\[\|\theta_j\|_2 \leq \|\thetaen\|_2 + \normfro{\theta_j -\thetaen} \leq \psi_{UB} + \sqrt{p(\psi_{UB}-\ell_\star(\epsilon))^2 +\zeta}.\]
Hence, with $\Psi_\star(\epsilon) \eqdef \min\left(\psi^1_\star(\epsilon), \psi_{UB} + \sqrt{p(\psi_{UB}-\ell_\star(\epsilon))^2 +\zeta}\right)$, we have shown that $\{\tau_\epsilon=\infty\}\subset\{\tau(\ell_\star(\epsilon),\Psi_\star(\epsilon))=\infty\}$, and the first part of the lemma follows from the bound (\ref{lem:sto:eq1}). 
\medskip

\paragraph{\tt Bound on $\PE(\Psi_\star(\epsilon)^2)$}\;\; Clearly it suffices to bound $\PE(\zeta)$. 
Recall that $\Sigma_j = \frac{1}{N_j}\sum_{k=1}^{N_j}z_kz_k'$, where $z_{1:N_j}\stackrel{i.i.d.}{\sim}\textbf{N}(0,\theta_{j-1}^{-1})$. We easily calculate (See Lemma \ref{lem:exp:bound} for details) that on the event $\{\tau_\epsilon>j-1\}$,
\[\PE\left(\normfro{\Sigma_j -\theta_{j-1}^{-1}}^2\vert \F_{j-1}\right) = \frac{1}{N_j}\left(\textsf{Tr}(\theta_{j-1}^{-1})^2 + \normfro{\theta_{j-1}^{-1}}^2\right),\]
and for $\theta_{j}\in\M_+(\epsilonmod_\star(\epsilon),\kappamod^1_\star(\epsilon))$, $\textsf{Tr}(\theta_{j}^{-1})^2 + \normfro{\theta_{j}^{-1}}^2\leq \epsilonmod_\star(\epsilon)^{-2}(p+p^2)$. 
Hence 
\begin{eqnarray*}
\PE\left[\sum_{j=1}^\infty \textbf{1}_{\{\tau>j-1\}}\normfro{\Sigma_j-\theta_{j-1}^{-1}}^2\right] &=& \sum_{j=1}^\infty \PE\left[\textbf{1}_{\{\tau>j-1\}}\PE\left(\normfro{\Sigma_j-\theta_{j-1}^{-1}}^2\vert\F_{j-2}\right)\right]\\
&\leq& \epsilonmod_\star(\epsilon)^{-2}(p+p^2) \sum_{j=1}^\infty \frac{1}{N_j}<\infty,\end{eqnarray*}
by assumption.  By Doob's inequality (\cite{hall:heyde:80}~ Theorem 2.2) applied to the martingale $\{\sum_{j=1}^k V_k\}$, 
\begin{eqnarray*}
\PE\left[\sup_{k\geq 1}\left|\sum_{j=1}^k V_j\right|\right]&=&\lim_{N\to\infty} \PE\left[\sup_{1\leq k\leq N}\left|\sum_{j=1}^k V_j\right|\right]\leq 2\lim_{N\to\infty} \PE^{1/2} \left[\left|\sum_{j=1}^N V_j\right|^2\right] \\
 &=&2\left\{ \sum_{j=1}^\infty\PE(V_j^2)\right\}^{1/2}. \end{eqnarray*}
Using again the facts that the proximal operator is non-expansive and $\thetaen=T_\gamma(\thetaen;\alpha)$, we have $|V_j|\leq \textbf{1}_{\{\tau_\epsilon>j-1\}} \normfro{\theta_{j-1}-\thetaen} \normfro{\Sigma_j-\theta_{j-1}^{-1}}$. Therefore, with similar calculations as above, we have
\[\PE(V_j^2)  =\PE\left[\PE(V_j^2\vert \F_{j-1})\right]\leq \ell_\star(\epsilon)^{-2}(p+p^2)N_j^{-1}\PE\left(\textbf{1}_{\{\tau_\epsilon>j-1\}}\normfro{\theta_{j-1}-\thetaen}^2\right).\]
On $\{\tau_\epsilon>j-1\}$, $\normfro{\theta_{j-1}-\thetaen}\leq \sqrt{p}\|\theta_{j-1}-\thetaen\|_2\leq \sqrt{p}(\psi_{UB}-\ell_\star(\epsilon))$. Hence
\[\PE(V_j^2) \leq \frac{p(p+p^2)(\psi_{UB}-\ell_\star(\epsilon))^2}{\ell_\star(\epsilon)^2}\frac{1}{N_j},\]
which together with the assumption $\sum_j N_j^{-1}<\infty$, and the above calculation show that
$\PE\left[\sup_{k\geq 1}\left|\sum_{j=1}^k V_j\right|\right]<\infty$.

\paragraph{\tt Convergence of $\theta_n$}\;\; We sum (\ref{thm4:eq2}) from $j=1$ to $k$, which gives, for all $k\geq 1$:
\begin{multline*}
\textbf{1}_{\{\tau_\epsilon>k\}}\normfro{\theta_{k}-\thetaen}^2 + \frac{\gamma}{\kappamod^2}\sum_{j=1}^{k}\textbf{1}_{\{\tau_\epsilon>j-1\}}\normfro{\theta_{j-1}-\thetaen}^2   \\
\leq  2\gamma \sup_{k\geq 1}\left|\sum_{j=1}^k V_j\right| +2\gamma^2 \sum_{j=1}^\infty \textbf{1}_{\{\tau_\epsilon>j-1\}}\normfro{\Sigma_j-\theta_{j-1}^{-1}}^2.
\end{multline*} 
We have seen above that the term on the right-hand side of this inequality  has a finite expectation. This implies the series $\sum_{j=1}^{\infty}\textbf{1}_{\{\tau_\epsilon>j-1\}}\normfro{\theta_{j-1}-\thetaen}^2$ is finite almost surely, which in turn implies that on $\{\tau_\epsilon=\infty\}$, we necessarily have $\lim_k\theta_k=\thetaen$, as claimed.

\end{proof}

\subsection{Proof of Theorem \ref{thm4}}\label{proof:thm4}
\begin{proof}
Taking the expectation on both sides on (\ref{thm4:eq2}) yields
\begin{multline*}
\PE\left[\textbf{1}_{\{\tau>j\}}\normfro{\theta_{j}-\thetaen}^2\right]\leq \left(1-\frac{\gamma}{\kappamod^2}\right)\PE\left[\textbf{1}_{\{\tau>j-1\}}\normfro{\theta_{j-1}-\thetaen}^2\right] \\
+ 2\gamma^2\PE\left[\textbf{1}_{\{\tau>j-1\}}\PE\left(\normfro{\Sigma_j-\theta_{j-1}^{-1}}^2\vert\F_{j-1}\right)\right].\end{multline*}
Iterating this inequality yields
\begin{multline*}
\PE\left[\textbf{1}_{\{\tau>k\}}\normfro{\theta_{k}-\thetaen}^2\right]\leq \left(1-\frac{\gamma}{\kappamod^2}\right)^k \normfro{\theta_0-\thetaen}^2 \\
+ 2\gamma^2\sum_{j=1}^k\left(1-\frac{\gamma}{\kappamod^2}\right)^{k-j}\PE\left[\textbf{1}_{\{\tau>j-1\}}\PE\left(\normfro{\Sigma_j-\theta_{j-1}^{-1}}^2\right)\right]. \end{multline*}
Recall that $\Sigma_j = \frac{1}{N_j}\sum_{k=1}^{N_j}z_kz_k'$, where $z_{1:N_j}\stackrel{i.i.d.}{\sim}\textbf{N}(0,\theta_{j-1}^{-1})$. We easily calculate (See Lemma \ref{lem:exp:bound} for details) that on the event $\{\tau>j-1\}$,
\[\PE\left(\normfro{\Sigma_j -\theta_{j-1}^{-1}}^2\vert \F_{j-1}\right) = \frac{1}{N_j}\left(\textsf{Tr}(\theta_{j-1}^{-1})^2 + \normfro{\theta_{j-1}^{-1}}^2\right),\]
and for $\theta_{j}\in\M_+(\epsilonmod,\kappamod)$, $\textsf{Tr}(\theta_{j}^{-1})^2 + \normfro{\theta_{j}^{-1}}^2\leq \epsilonmod^{-2}(p+p^2)$. The stated bound on the term $\PE\left[\textbf{1}_{\{\tau>k\}}\normfro{\theta_{k}-\thetaen}^2\right]$ then follows.
\end{proof}

\subsection{Proof of Theorem \ref{thm5}}\label{sec:proofthm5}
\begin{proof}
We write $\tau = \tau(\epsilonmod,\kappamod)$. On $\{\tau>k\}$, $\theta_0,\ldots,\theta_k\in\M_+(\epsilonmod,\kappamod)$, and $\theta_{i+1} = \Prox_\gamma(\theta_i-\gamma(S-\Sigma_{i+1};\alpha)$ for $i\geq 0$. We apply Lemma \ref{lem4} with $H=\Sigma_{i+1}$ to write
\[\normfro{\theta_{i+1}-\thetaen}^2\leq \left(1-\frac{\gamma}{\kappamod^2}\right) \normfro{\theta_{i}-\thetaen}^2 + 2\gamma\pscal{\theta_{i+1}-\thetaen}{\Sigma_{i+1}-\theta_i^{-1}}.\]
By iterating this bound, we obtain
\begin{eqnarray}\label{eq1:thm5}
\normfro{\theta_{k}-\thetaen}^2&\leq& \left(1-\frac{\gamma}{\kappamod^2}\right)^k \normfro{\theta_{0}-\thetaen}^2 \nonumber\\
&&+ 2\gamma\sup_{k\geq 0}\normfro{\theta_k-\thetaen}^2\sum_{j=1}^k \left(1-\frac{\gamma}{\kappamod^2}\right)^{k-j} \normfro{\Sigma_{j+1}-\theta_j^{-1}}.\end{eqnarray}
On $\{\tau(\epsilonmod,\kappamod)=\infty\}$, $\sup_{i\geq 0}\normfro{\theta_i-\thetaen}^2$ is finite and if $\lim_j \normfro{\Sigma_{j+1}-\theta_j^{-1}} =0$, the bound (\ref{eq1:thm5}) would easily imply that 
$\lim_k \normfro{\theta_{k}-\thetaen}^2 = 0$. Hence the theorem is proved by showing that on $\{\tau=\infty\}$,  $\lim_k \normfro{\Sigma_{k+1}-\theta_k^{-1}} =0$.
From (\ref{algo2:rec1}), we write
\[\Sigma_{k+1}-\theta_k^{-1} = (1-\zeta_{k+1})\left(\Sigma_k-\theta_{k-1}^{-1}\right) + (1-\zeta_{k+1})(\theta_{k-1}^{-1}-\theta_k^{-1}) +\zeta_{k+1}\eta_{k+1},\]
where
\[\eta_{k+1}\eqdef \frac{1}{N}\sum_{k=1}^Nz_kz_k'-\theta_k^{-1},\;\;\;z_{1:N}\stackrel{i.i.d.}{\sim}\textbf{N}(0,\theta_k^{-1}).\]
We expand this into 
\begin{multline*}
\textsf{1}_{\{\tau>k\}}\left(\Sigma_{k+1}-\theta_k^{-1}\right) =(1-\zeta_{k+1})\textsf{1}_{\{\tau>k-1\}}\left(\Sigma_{k}-\theta_{k-1}^{-1}\right) + R_{k+1}^{(1)} + R_{k+1}^{(2)} + R_{k+1}^{(3)} + R_{k+1}^{(4)},\end{multline*}
where the remainders are given by
\[R_{k+1}^{(1)}\eqdef -\textsf{1}_{\{\tau=k\}}(1-\zeta_{k+1})\Sigma_k,\]
\[R_{k+1}^{(2)} \eqdef (1-\zeta_k)\textsf{1}_{\{\tau>k-1\}}\theta_{k-1}^{-1} -(1-\zeta_{k+1})\textsf{1}_{\{\tau>k\}}\theta_{k}^{-1} ,\]
\[R_{k+1}^{(3)} \eqdef (\zeta_k-\zeta_{k+1})\textsf{1}_{\{\tau>k-1\}}\theta_{k-1}^{-1},\]
and
\[R_{k+1}^{(4)}\eqdef \zeta_{k+1} \textsf{1}_{\{\tau>k\}}\eta_{k+1}.\]
Since $\textsf{1}_{\{\tau>k,\tau=\infty\}} = \textsf{1}_{\{\tau=\infty\}}$, and $\textsf{1}_{\{\tau=k,\tau=\infty\}}=0$, it follows that for all $n\geq 0$,
\begin{multline*}
\textsf{1}_{\{\tau=\infty\}}\left(\Sigma_{k+1}-\theta_k^{-1}\right) 
= \textsf{1}_{\{\tau=\infty\}}\prod_{k=1}^k(1-\zeta_{k+1})(\Sigma_1-\theta_0^{-1}) \\
+ \textsf{1}_{\{\tau=\infty\}}\sum_{j=1}^k\left(R_j^{(2)}+R_j^{(3)}+R_j^{(4)}\right)\prod_{i=j+1}^k(1-\zeta_{i+1}).
\end{multline*}
Clearly, we have $\prod_{k=1}^k(1-\zeta_{k+1})\leq \exp\left(-\sum_{k=1}^k\zeta_{k+1}\right)\to 0$ as $k\to\infty$ by (\ref{cond:zeta}), and if the series $\sum_{j\geq 1}\left(R_j^{(2)}+R_j^{(3)}+R_j^{(4)}\right)$ is finite on $\{\tau=\infty\}$, then by Kronecker lemma, it would follow that $\sum_{j=1}^k\left(R_j^{(2)}+R_j^{(3)}+R_j^{(4)}\right)\prod_{i=j+1}^k(1-\zeta_{i+1})\to 0$, as $k\to\infty$ on $\{\tau=\infty\}$. Hence, it suffices to prove that the series $\sum_{j\geq 1}\left(R_j^{(2)}+R_j^{(3)}+R_j^{(4)}\right)$ is finite on $\{\tau=\infty\}$. 

We have $\sum_{k=1}^k R_k^{(2)}= (1-\zeta_1)\textsf{1}_{\{\tau>0\}}\theta_{0}^{-1} - (1-\zeta_{k+1})\textsf{1}_{\{\tau>k\}}\theta_{k}^{-1}$. The assumption that $\theta_k$ has a limit and $\theta_k\in\M_+(\epsilonmod,\kappamod)$ easily implies that $\sum_{k} R_k^{(2)}$ is finite. Similarly, we have $\sum_k \normfro{R_k^{(3)}}\leq \epsilonmod^{-1}\zeta_0<\infty$, and 
\[\PE\left(\normfro{\sum_k R_k^{(4)}}^2\right)=\sum_k \zeta_k^2 \PE\left(\textsf{1}_{\{\tau>k\}}\normfro{\frac{1}{N}\sum_{k=1}^N z_kz_k'-\theta_k^{-1}}^2\right)\leq \epsilonmod^{-2}(p+p^2)\sum_k \zeta_k^2<\infty.\]
\end{proof}

\subsection{Proof of Theorem~\ref{thm:conn-comp-1}}\label{proof-conn-comp-1}
\begin{proof}
The proof follows~\citep{mazumder2012exact} with appropriate modifications, and we provide a brief sketch here. 

\noindent {\emph{First Part:}} \\
We start with the connected component decomposition of the non-zeros of $\hat{\theta}$.
Let us assume that the rows/columns of the matrix $\hat{\theta}$ have been arranged such that 
it is block diagonal. We proceed by writing the KKT conditions of Problem~\eqref{eq-glasso-enet-1}:
\begin{equation}\label{kkt-enet-1}
-\hat{\theta}^{-1} + S  + \tau_{1} \sign(\widehat\theta) + 2\tau_{2} \widehat\theta  = 0, 
\end{equation}
where, $\tau_{1} = \alpha \lambda_{1}$ and $\tau_{2} = \frac{1- \alpha}{2}\lambda_{2}$ and $\sign(\widehat\theta)$ is a matrix where $\sign(\cdot)$ is applied 
componentwise to every entry of $\widehat\theta$.  Since $\hat{\theta}$ is block diagonal so is $\hat{\theta}^{-1}$. If we take the
$(i,j)$th entry of the matrix appearing in~\eqref{kkt-enet-1} such that $i$ and $j$ belong to two different connected components then:
$-(\hat{\theta}^{-1})_{ij}  + 2 \tau \hat{\theta}_{ij} = 0$ which implies that 
$ s_{ij}  + \tau_{1} \sign(\widehat\theta_{ij}) = 0$. Thus we have:
$ |s_{ij}| \leq \tau_{1}$ for all pairs $i,j$ such that they belong to two different connected components.
Thus the binary matrix $((\mathbf{1} ( | s_{ij}| > \tau_{1})))$ will have zeros for all $i,j$ belonging to two different components $ {\mathcal{\widehat{V}}}_{r} $
and  ${\mathcal{\widehat{V}}}_{s}$ for $ r \neq s$.
The connected components of $((\mathbf{1} ( | s_{ij}| > \tau_{1})))$ have a finer resolution than ${\mathcal{ \widehat{V}} }_{j}, j = 1, \ldots,\widehat{J}$ and in particular 
$\widehat{J}  \leq J$.

\noindent {\emph{Second Part:}} \\
For the other part, let us start by assuming that  the symmetric binary matrix $((\mathbf{1} ( | s_{ij}| > \tau_{1})))$ breaks down into $J$ many connected components; and let 
$\widetilde{\theta} = \diag( \hat{\theta}_{1}, \ldots, \hat{\theta}_{J})$ be a block diagonal matrix, where, each 
$ \hat{\theta}_{r}$ is obtained by solving Problem~\eqref{eq-glasso-enet-1} restricted 
to the $r$th connected component ${\mathcal{ V }}_{r}$ where, $r = 1, \ldots, J.$ For any $i,j$ belonging to two different components ${\mathcal{ V }}_{r}$ and ${\mathcal{ V }}_{s}$
with $ r \neq s$ we have that $ |s_{ij} |   \leq  \tau_{1} $ and in addition, $\widetilde{\theta}_{ij}=0$ and $(\widetilde{\theta}^{-1})_{ij} = 0$. This implies that 
$ \widetilde{\theta}$ satisfies the KKT conditions~\eqref{kkt-enet-1} and is hence a solution to Problem~\eqref{eq-glasso-enet-1}. This in particular, 
implies that $\widehat{J} \geq J$ and the connected components of ${\mathcal{ \widehat{V}} }_{j}, j = 1, \ldots, \widehat{J}$ are a finer resolution than 
${\mathcal{ V }}_{r}, r = 1, \ldots, J.$

Combining the above two parts, we conclude that the connected components 
of the two binary matrices $((\mathbf{1} ( | s_{ij}| > \tau_{1})))$  and $((\mathbf{1} ( | \hat{\theta}_{ij}  \neq  0)))$ are indeed equal. 

\end{proof}

\subsection{Proof of Lemma~\ref{closed-soon-ridge}}\label{proof-ridge-soln-1}
\begin{proof}
To see this we take the derivative of the objective function wrt $\theta$ and set it to zero:
\begin{equation}\label{line-ridge-1}
- \theta^{-1} + S +    \lambda \theta = 0.
\end{equation}
Suppose that the sample covariance matrix $S$ can be written as:
$$ S = U D U',$$
where the above denotes the full eigen-value decomposition of $S$ which is a ${p \times p}$ matrix. 
Let $d_{i}$ denote the diagonals of $D$. 
We will show that the solution to Problem~\eqref{eq-ridge} is of the form $\hat{\theta} = U \diag(\sigma) U'$, where, $\diag(\sigma)$ is a diagonal matrix with the $i$th diagonal entry being $\sigma_{i}$.

Let us multiply both sides of~\eqref{line-ridge-1} by $U'$ and $U$ on the left and right respectively. It is then easy to see that 
the optimal values of $\sigma$ can be computed as follows:
$$ -1/\sigma_{i} + d_{i} + \lambda_{} \sigma_{i} = 0$$ for all $i = 1, \ldots, p$.
The above can be solved for every $i$ separately leading to:
$$\hat{\sigma}_{i} = \frac{ - d_{i} + \sqrt{d_{i}^2 + 4 \lambda_{}}}{2 \lambda_{}}, \;\; \forall i$$
Thus we have the statement of Lemma~\ref{closed-soon-ridge}.
\end{proof}

\medskip
\medskip

\noindent {\large{\textbf{Acknowledgements}}}

\medskip

		Yves F. Atchad\'e is partly supported by NSF grant DMS 1228164.  Rahul Mazumder was supported by ONR grant ONR - N00014-15-1-2342,
		Columbia University's start-up fund and an interface grant from the Betty-Moore Sloan Foundation. R.M. will like to thank Robert Freund for helpful 
		comments and encouragement.



\bibliographystyle{ims}

\bibliography{biblio,biblio_rahul}

\begin{thebibliography}{46}
\expandafter\ifx\csname natexlab\endcsname\relax\def\natexlab#1{#1}\fi
\expandafter\ifx\csname url\endcsname\relax
  \def\url#1{\texttt{#1}}\fi
\expandafter\ifx\csname urlprefix\endcsname\relax\def\urlprefix{URL }\fi

\bibitem[{{Atchade} et~al.(2014){Atchade}, {Fort} and
  {Moulines}}]{atchade:etal14}
\textsc{{Atchade}, Y.~F.}, \textsc{{Fort}, G.} and \textsc{{Moulines}, E.}
  (2014).
\newblock {On stochastic proximal gradient algorithms}.
\newblock \textit{ArXiv e-prints} .

\bibitem[{Banerjee et~al.(2008)Banerjee, Ghaoui and d'Aspremont}]{BGA2008}
\textsc{Banerjee, O.}, \textsc{Ghaoui, L.~E.} and \textsc{d'Aspremont, A.}
  (2008).
\newblock Model selection through sparse maximum likelihood estimation for
  multivariate gaussian or binary data.
\newblock \textit{Journal of Machine Learning Research} \textbf{9} 485--516.

\bibitem[{Bauschke and Combettes(2011)}]{bauschke:combettes:2011}
\textsc{Bauschke, H.~H.} and \textsc{Combettes, P.~L.} (2011).
\newblock \textit{Convex analysis and monotone operator theory in {H}ilbert
  spaces}.
\newblock CMS Books in Mathematics/Ouvrages de Math\'ematiques de la SMC,
  Springer, New York.
\newblock With a foreword by H{\'e}dy Attouch.
\newline\urlprefix\url{http://dx.doi.org/10.1007/978-1-4419-9467-7}

\bibitem[{Beck and Teboulle(2009)}]{fista-09}
\textsc{Beck, A.} and \textsc{Teboulle, M.} (2009).
\newblock A fast iterative shrinkage-thresholding algorithm for linear inverse
  problems.
\newblock \textit{SIAM Journal on Imaging Sciences} \textbf{2} 183--202.

\bibitem[{Becker et~al.(2011)Becker, Cand{\`e}s and
  Grant}]{becker2011templates}
\textsc{Becker, S.~R.}, \textsc{Cand{\`e}s, E.~J.} and \textsc{Grant, M.~C.}
  (2011).
\newblock Templates for convex cone problems with applications to sparse signal
  recovery.
\newblock \textit{Mathematical Programming Computation} \textbf{3} 165--218.

\bibitem[{Bertsekas(2011)}]{bertsekas:11}
\textsc{Bertsekas, D.~P.} (2011).
\newblock Incremental proximal methods for large scale convex optimization.
\newblock \textit{Math. Program.} \textbf{129} 163--195.

\bibitem[{Boyd et~al.(2011)Boyd, Parikh, Chu, Peleato and
  Eckstein}]{boyd-admm1}
\textsc{Boyd, S.}, \textsc{Parikh, N.}, \textsc{Chu, E.}, \textsc{Peleato, B.}
  and \textsc{Eckstein, J.} (2011).
\newblock {Distributed Optimization and Statistical Learning via the
  Alternating Direction Method of Multipliers}.
\newblock \textit{Foundations and Trends in Machine Learning} .

\bibitem[{Boyd and Vandenberghe(2004)}]{BV2004}
\textsc{Boyd, S.} and \textsc{Vandenberghe, L.} (2004).
\newblock \textit{Convex Optimization}.
\newblock Cambridge University Press, Cambridge.

\bibitem[{B{\"u}hlmann and Van De~Geer(2011)}]{buhlmann2011statistics}
\textsc{B{\"u}hlmann, P.} and \textsc{Van De~Geer, S.} (2011).
\newblock \textit{Statistics for high-dimensional data: methods, theory and
  applications}.
\newblock Springer Science \& Business Media.

\bibitem[{Chen et~al.(2011)Chen, Anitescu and Saad}]{chen:etal:2011}
\textsc{Chen, J.}, \textsc{Anitescu, M.} and \textsc{Saad, Y.} (2011).
\newblock Computing {$f(A)b$} via least squares polynomial approximations.
\newblock \textit{SIAM J. Sci. Comput.} \textbf{33} 195--222.

\bibitem[{Duchi et~al.(2012)Duchi, Bartlett and Wainwright}]{duchietal12}
\textsc{Duchi, J.~C.}, \textsc{Bartlett, P.~L.} and \textsc{Wainwright, M.~J.}
  (2012).
\newblock Randomized smoothing for stochastic optimization.
\newblock \textit{SIAM J. Optim.} \textbf{22} 674--701.

\bibitem[{Eiermann and Ernst(2006)}]{eiermann:ernst:06}
\textsc{Eiermann, M.} and \textsc{Ernst, O.~G.} (2006).
\newblock A restarted krylov subspace method for the evaluation of matrix
  functions.
\newblock \textit{SIAM Journal on Numerical Analysis} \textbf{44} 2481--2504.

\bibitem[{Friedman(1989)}]{Fr89_mod}
\textsc{Friedman, J.} (1989).
\newblock Regularized discriminant analysis.
\newblock \textit{Journal of the {A}merican {S}tatistical {A}ssociation}
  \textbf{84} 165--175.

\bibitem[{Friedman et~al.(2007{\natexlab{a}})Friedman, Hastie, Hoefling and
  Tibshirani}]{FHT2007}
\textsc{Friedman, J.}, \textsc{Hastie, T.}, \textsc{Hoefling, H.} and
  \textsc{Tibshirani, R.} (2007{\natexlab{a}}).
\newblock Pathwise coordinate optimization.
\newblock \textit{Annals of Applied Statistics} \textbf{2} 302--332.

\bibitem[{Friedman et~al.(2007{\natexlab{b}})Friedman, Hastie and
  Tibshirani}]{FHT2007a}
\textsc{Friedman, J.}, \textsc{Hastie, T.} and \textsc{Tibshirani, R.}
  (2007{\natexlab{b}}).
\newblock Sparse inverse covariance estimation with the graphical lasso.
\newblock \textit{Biostatistics} \textbf{9} 432--441.

\bibitem[{Golub and Van~Loan(2013)}]{golub:vl}
\textsc{Golub, G.~H.} and \textsc{Van~Loan, C.~F.} (2013).
\newblock \textit{Matrix computations}.
\newblock 4th ed. Johns Hopkins Studies in the Mathematical Sciences, Johns
  Hopkins University Press, Baltimore, MD.

\bibitem[{Hale et~al.(2008)Hale, Higham and Trefethen}]{Hale2008}
\textsc{Hale, N.}, \textsc{Higham, N.~J.} and \textsc{Trefethen, L.~N.} (2008).
\newblock Computing {$A^\alpha$}, {$\log(A)$}, and related matrix functions by
  contour integrals \textbf{46} 2505--2523.

\bibitem[{Hall and Heyde(1980)}]{hall:heyde:80}
\textsc{Hall, P.} and \textsc{Heyde, C.~C.} (1980).
\newblock \textit{Martingale Limit theory and its application}.
\newblock Academic Press, New York.

\bibitem[{Hastie et~al.(1995)Hastie, Buja and Tibshirani}]{hastie1995penalized}
\textsc{Hastie, T.}, \textsc{Buja, A.} and \textsc{Tibshirani, R.} (1995).
\newblock Penalized discriminant analysis.
\newblock \textit{The Annals of Statistics}  73--102.

\bibitem[{Hastie et~al.(2009)Hastie, Tibshirani and Friedman}]{FHT-09-new}
\textsc{Hastie, T.}, \textsc{Tibshirani, R.} and \textsc{Friedman, J.} (2009).
\newblock \textit{The Elements of Statistical Learning, Second Edition: Data
  Mining, Inference, and Prediction (Springer Series in Statistics)}.
\newblock 2nd ed. Springer New York.

\bibitem[{Hsieh et~al.(2014)Hsieh, Sustik, Dhillon and
  Ravikumar}]{JMLR:v15:hsieh14a}
\textsc{Hsieh, C.-J.}, \textsc{Sustik, M.~A.}, \textsc{Dhillon, I.~S.} and
  \textsc{Ravikumar, P.} (2014).
\newblock Quic: Quadratic approximation for sparse inverse covariance
  estimation.
\newblock \textit{Journal of Machine Learning Research} \textbf{15} 2911--2947.

\bibitem[{Kiefer and Wolfowitz(1952)}]{kieferwolfowitz52}
\textsc{Kiefer, J.} and \textsc{Wolfowitz, J.} (1952).
\newblock Stochastic estimation of the maximum of a regression function.
\newblock \textit{Ann. Math. Statistics} \textbf{23} 462--466.

\bibitem[{{Kone{\v c}n{\'y}} and {Richt{\'a}rik}(2013)}]{konecny:richtarick13}
\textsc{{Kone{\v c}n{\'y}}, J.} and \textsc{{Richt{\'a}rik}, P.} (2013).
\newblock {Semi-Stochastic Gradient Descent Methods}.
\newblock \textit{ArXiv e-prints} .

\bibitem[{Laurent and Massart(2000)}]{laurent:massart:00}
\textsc{Laurent, B.} and \textsc{Massart, P.} (2000).
\newblock Adaptive estimation of a quadratic functional by model selection.
\newblock \textit{Ann. Statist.} \textbf{28} 1302--1338.

\bibitem[{Lauritzen(1996)}]{Laur1996}
\textsc{Lauritzen, S.} (1996).
\newblock \textit{Graphical Models}.
\newblock Oxford University Press.

\bibitem[{Li and Toh(2010)}]{Li:10}
\textsc{Li, L.} and \textsc{Toh, K.-C.} (2010).
\newblock An inexact interior point method for l1-regularized sparse covariance
  selection.
\newblock \textit{Mathematical Programming Computation} \textbf{31} 2000--2016.

\bibitem[{Lu(2009)}]{Lu:09}
\textsc{Lu, Z.} (2009).
\newblock Smooth optimization approach for sparse covariance selection.
\newblock \textit{SIAM Journal on Optimization} \textbf{19} 1807--1827.

\bibitem[{Mazumder and Hastie(2012{\natexlab{a}})}]{mazumder2012exact}
\textsc{Mazumder, R.} and \textsc{Hastie, T.} (2012{\natexlab{a}}).
\newblock Exact covariance thresholding into connected components for
  large-scale graphical lasso.
\newblock \textit{The Journal of Machine Learning Research} \textbf{13}
  781--794.

\bibitem[{Mazumder and Hastie(2012{\natexlab{b}})}]{mazumder2012graphical}
\textsc{Mazumder, R.} and \textsc{Hastie, T.} (2012{\natexlab{b}}).
\newblock The graphical lasso: New insights and alternatives.
\newblock \textit{Electronic Journal of Statistics} \textbf{6} 2125--2149.

\bibitem[{Nesterov(2005)}]{nest_05}
\textsc{Nesterov, Y.} (2005).
\newblock Smooth minimization of non-smooth functions.
\newblock \textit{Mathematical Programming, Series A} \textbf{103} 127--152.

\bibitem[{Nesterov(2013)}]{nest-07-new}
\textsc{Nesterov, Y.} (2013).
\newblock Gradient methods for minimizing composite functions.
\newblock \textit{Mathematical Programming} \textbf{140} 125--161.

\bibitem[{Oztoprak et~al.(2012)Oztoprak, Nocedal, Rennie and
  Olsen}]{oztoprak2012newton}
\textsc{Oztoprak, F.}, \textsc{Nocedal, J.}, \textsc{Rennie, S.} and
  \textsc{Olsen, P.~A.} (2012).
\newblock Newton-like methods for sparse inverse covariance estimation.
\newblock In \textit{Advances in Neural Information Processing Systems}.

\bibitem[{Parikh and Boyd(2013)}]{parikh:boyd:2013}
\textsc{Parikh, N.} and \textsc{Boyd, S.} (2013).
\newblock Proximal algorithms.
\newblock \textit{Foundations and Trends in Optimization} \textbf{1} 123--231.

\bibitem[{Pourahmadi(2013)}]{pourahmadi2013high}
\textsc{Pourahmadi, M.} (2013).
\newblock \textit{High-Dimensional Covariance Estimation: With High-Dimensional
  Data}.
\newblock John Wiley \& Sons.

\bibitem[{Robbins and Monro(1951)}]{robbinsMonro51}
\textsc{Robbins, H.} and \textsc{Monro, S.} (1951).
\newblock A stochastic approximation method.
\newblock \textit{Ann. Math. Statistics} \textbf{22} 400--407.

\bibitem[{Rolfs et~al.(2012)Rolfs, Rajaratnam, Guillot, Wong and
  Maleki}]{rolfs2012iterative}
\textsc{Rolfs, B.}, \textsc{Rajaratnam, B.}, \textsc{Guillot, D.},
  \textsc{Wong, I.} and \textsc{Maleki, A.} (2012).
\newblock Iterative thresholding algorithm for sparse inverse covariance
  estimation.
\newblock In \textit{Advances in Neural Information Processing Systems}.

\bibitem[{Scheinberg et~al.(2010)Scheinberg, Ma and
  Goldfarb}]{scheinberg2010sparse}
\textsc{Scheinberg, K.}, \textsc{Ma, S.} and \textsc{Goldfarb, D.} (2010).
\newblock Sparse inverse covariance selection via alternating linearization
  methods.
\newblock In \textit{Advances in Neural Information Processing Systems}.

\bibitem[{Shalev-Shwartz and Zhang(2013)}]{shalev-shwartz:zhang13}
\textsc{Shalev-Shwartz, S.} and \textsc{Zhang, T.} (2013).
\newblock Stochastic dual coordinate ascent methods for regularized loss
  minimization.
\newblock \textit{J. Mach. Learn. Res.} \textbf{14} 567--599.

\bibitem[{Vandenberghe and Boyd(1996)}]{vandenberghe1996semidefinite}
\textsc{Vandenberghe, L.} and \textsc{Boyd, S.} (1996).
\newblock Semidefinite programming.
\newblock \textit{SIAM review} \textbf{38} 49--95.

\bibitem[{Vandenberghe et~al.(1998)Vandenberghe, Boyd and
  Wu}]{vandenberghe1998determinant}
\textsc{Vandenberghe, L.}, \textsc{Boyd, S.} and \textsc{Wu, S.-P.} (1998).
\newblock Determinant maximization with linear matrix inequality constraints.
\newblock \textit{SIAM Journal on Matrix Analysis and Applications} \textbf{19}
  499--533.

\bibitem[{Warton(2008)}]{warton2008penalized}
\textsc{Warton, D.~I.} (2008).
\newblock Penalized normal likelihood and ridge regularization of correlation
  and covariance matrices.
\newblock \textit{Journal of the American Statistical Association}
  \textbf{103}.

\bibitem[{Witten et~al.(2011)Witten, Friedman and Simon}]{witten2011new}
\textsc{Witten, D.~M.}, \textsc{Friedman, J.~H.} and \textsc{Simon, N.} (2011).
\newblock New insights and faster computations for the graphical lasso.
\newblock \textit{Journal of Computational and Graphical Statistics}
  \textbf{20} 892--900.

\bibitem[{{Xiao} and {Zhang}(2014)}]{xiao:zhang14}
\textsc{{Xiao}, L.} and \textsc{{Zhang}, T.} (2014).
\newblock {A Proximal Stochastic Gradient Method with Progressive Variance
  Reduction}.
\newblock \textit{ArXiv e-prints} .

\bibitem[{Yuan and Lin(2007)}]{yuan_lin_07}
\textsc{Yuan, M.} and \textsc{Lin, Y.} (2007).
\newblock Model selection and estimation in the gaussian graphical model.
\newblock \textit{Biometrika} \textbf{94} 19--35.

\bibitem[{Yuan(2012)}]{yuan2012alternating}
\textsc{Yuan, X.} (2012).
\newblock Alternating direction method for covariance selection models.
\newblock \textit{Journal of Scientific Computing} \textbf{51} 261--273.

\bibitem[{Zou and Hastie(2005)}]{ZH2005}
\textsc{Zou, H.} and \textsc{Hastie, T.} (2005).
\newblock Regularization and variable selection via the elastic net.
\newblock \textit{Journal of the Royal Statistical Society Series B.}
  \textbf{67} 301--320.

\end{thebibliography}

\newpage

\appendix

\begin{center}
\large{\textbf{Appendix}}
\end{center}

\section{Related Work and Algorithms}\label{related-work}

In this section we review some of the state-of-the art methods and approaches for the \Glasso problem (Problem~\eqref{eq-glasso-1}). Problem~\eqref{eq-glasso-1} is a nonlinear convex semidefinite optimization problem~\citep{vandenberghe1996semidefinite} and off-the-shelf interior point solvers typically have a per-iteration complexity of $O(p^6)$ that stems from solving a typically dense system with $O(p^2)$ variables~\citep{vandenberghe1998determinant}. 
This makes generic interior point solvers inapplicable for solving problems with $p$ of the order of a few hundred.

A popular approach to optimize problem~\eqref{eq-glasso-1} is to focus on its dual optimization problem, given by:
\begin{equation}\label{eq-glasso-1-dual}
\maxi_{ w \in \M_+ }\;\;\;\; \log\det(w) \;\;\; \sbt \;\;  \| S - w \|_{\infty} \leq \lambda,
\end{equation}
with primal dual relationship given by $ w = \theta^{-1}$. 
The dual problem has a smooth function appearing in its objective. Many efficient solvers for Problem~\eqref{eq-glasso-1}
optimize the dual Problem~\eqref{eq-glasso-1-dual} --- see 
for example~\cite{BGA2008,FHT2007,Lu:09,mazumder2012graphical} and references therein.

In one of the earlier works,~\cite{BGA2008} consider solving the dual Problem~\eqref{eq-glasso-1-dual}.
They propose a smooth accelerated gradient based method \citep{nest_05} with complexity $O(\frac{p^{4.5}}{\deltamod})$ to obtain a $\deltamod$-accurate solution --- the per iteration cost being $O(p^3)$. They also proposed a block coordinate method which requires solving at every iteration, a box constrained quadratic program (QP) which they solve using Interior point methods---leading to an overall complexity of $O(p^4)$.

The \emph{graphical lasso} algorithm \citep{FHT2007a} is widely regarded as one of the most efficient and practical algorithms for Problem~\eqref{eq-glasso-1}.
The algorithm uses a row-by-row block coordinate method that requires to solve a $\ell_{1}$ regularized quadratic program for every row/column---the authors use 
one-at-a-time cyclical coordinate descent to solve the QPs to high accuracy. While it is difficult to provide a precise complexity result for this method, 
the cost is roughly $O(p^3)$ for  (reasonably) sparse-problems with $p$ nodes. 
For dense problems the cost can be as large as $O(p^4)$, or even more. \cite{mazumder2012graphical} further investigate the properties of the \emph{graphical lasso} algorithm, its operational characteristics and propose another block coordinate method for Problem~\eqref{eq-glasso-1} that often enjoys better numerical behavior than \emph{graphical lasso}.

The algorithm proposed in \cite{Lu:09} employs accelerated gradient based algorithms  \citep{nest_05,nest-07-new}. 
The algorithm \textsc{smacs} proposed in the paper has a 
per iteration complexity of $O(p^3)$ and an overall complexity of $O(\frac{p^4}{\sqrt{\deltamod}})$ to reach a $\deltamod$-accurate optimal solution.

\cite{Li:10} propose a specialized interior point algorithm for problem~\eqref{eq-glasso-1}. By rewriting the objective as a smooth convex optimization problem by doubling the number of variables, the paper proposes a scheme to scale interior point like methods up to $p = 2000$.

\cite{scheinberg2010sparse} propose alternating direction based methods for the problem, the main complexity per iteration being $O(p^3)$ associated 
with a full spectral decomposition of a $p \times p$ symmetric matrix and a matrix inversion.
\cite{yuan2012alternating} propose an alternating direction method for problem~\eqref{eq-glasso-1}, with per iteration complexity of $O(p^3)$.
Computational scalability of a similar type can also be achieved by using the alternating direction method of multipliers ADMM~\citet{boyd-admm1} which perform
spectral decompositions and/or matrix inversions with  per iteration  complexity $O(p^3)$.

Fairly recently, \cite{JMLR:v15:hsieh14a} propose a Newton-like method for Problem~\eqref{eq-glasso-1}, the algorithm is known as {\textsc{Quic}}.
The main idea is to reduce the problem to iteratively solving large scale $\ell_{1}$ regularized quadratic programs, which are solved using 
one-at-a-time coordinate descent update rules. The authors develop asymptotic convergence guarantees of the algorithm.
It appears that several computational tricks and fairly advanced implementations in~{\texttt{C++}}
are used to make the approach scalable to large problems. At the time of writing this paper, {\textsc{QUIC}} seems to be one of the most advanced algorithms for \Glassoperiod~
\cite{oztoprak2012newton} propose a related approach based on a Newton-like quadratic approximation of the log-determinant function.

\section{Additional Computational Details}\label{comp-details-11}

We initialize all the solvers using the diagonal matrix obtained by taking the inverse sample variances. For all the simulated-data experiments, the step-size and the Monte Carlo batch-size are taken as follows. The step-size is set to $\gamma=10$, the Monte Carlo batch-size is set to $N_k= \lceil 30 +k^{1.8} \rceil$ at iteration $k$. Additionally, for Algorithm~\ref{algo2} we use $N=400$, and $\zeta_k = k^{-0.7}$.

For $p=1000$,  the values of the regularization parameters were taken as
$(\alpha, \lambda) = (0.89, 0.01)$.
We computed $\hat{\theta}$ (the target solution to the optimization problem) by running the deterministic algorithm for 1000 iterations.

The size of the largest component is  967, one component had size two with all  other components having size one. In this case, the splitting offered marginal improvements since the size of the maximal component 
was quite close to $p$.

For $p=5000$  the values of the regularization parameters were taken as
$(\alpha, \lambda) = (0.93, 0.0085)$ and
we computed $\hat{\theta}$ (the target solution to the optimization problem) by running the deterministic algorithm for 1000 iterations.

For the case, $p=5,000$ splitting leads to $76$ connected components, 
The size of the largest component is 4924 with all  other components having size one.

For $p=10,000$ ,  the values of the regularization parameters were taken as
$(\alpha, \lambda) = (0.96,   0.01)$.
 We computed $\hat{\theta}$ (the target solution to the optimization problem) by running the deterministic algorithm for 500 iterations.

For the case,  $p=10,000$ splitting leads to $1330$ connected components, 
The size of the largest component is 8670, one component has size two with all  other components having size one.

We present the results for the cases $p=5,000$ and $p=10,000$ in Table~\ref{table-comp-quic-etc}.

For the real-data example, the stochastic algorithms are set up as follows. The step-size is set to $\gamma=5\times 10^{-5}$, the Monte Carlo batch-size is set to $N_k= \lceil 100 +k^{1.8} \rceil$ at iteration $k$. Additionally, for Algorithm~\ref{algo2} we use $N=200$, and $\zeta_k = k^{-0.52}$.

\section{Some Technical Lemmas and Proofs}

\begin{lemma}\label{lem2}
Consider the function $f(\theta)=-\log\det\theta +\textsf{Tr}(\theta S)$, $\theta\in\M_+$. Take $0<\epsilonmod<\kappamod\leq \infty$. If $\theta\in\M_+(\epsilonmod,\kappamod)$, and $H\in\M$ are such that $\theta+ H\in\M_+(\epsilonmod,\kappamod)$, then 
\[ f(\theta)  + \pscal{S-\theta^{-1}}{H}  +\frac{1}{2\kappamod^2} \|H\|^2\leq f\left(\theta+ H\right) \leq  f(\theta)  + \pscal{S-\theta^{-1}}{H}  +\frac{1}{2\epsilonmod^2} \|H\|^2.\]
\end{lemma}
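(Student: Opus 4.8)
The plan is to reduce the two-sided estimate to a one-dimensional Taylor expansion of $f$ along the segment joining $\theta$ and $\theta+H$, and then to control the second derivative by the spectral bounds that define $\M_+(\epsilonmod,\kappamod)$. First I would record that $\M_+(\epsilonmod,\kappamod)$ is convex, being the intersection of the two convex sets $\{\theta:\ \theta-\epsilonmod I\succeq 0\}$ and $\{\theta:\ \kappamod I-\theta\succeq 0\}$. Since both $\theta$ and $\theta+H$ lie in $\M_+(\epsilonmod,\kappamod)$, so does $\theta+tH$ for every $t\in[0,1]$; in particular $\epsilonmod\le\lambda_{\textsf{min}}(\theta+tH)$ and $\lambda_{\textsf{max}}(\theta+tH)\le\kappamod$ throughout the segment, and $\theta+tH$ stays positive definite, so $f$ is smooth there.

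Next I would set $\phi(t)\eqdef f(\theta+tH)=-\log\det(\theta+tH)+\pscal{S}{\theta+tH}$. Using $\frac{d}{dt}\log\det(\theta+tH)=\pscal{(\theta+tH)^{-1}}{H}$ and $\frac{d}{dt}(\theta+tH)^{-1}=-(\theta+tH)^{-1}H(\theta+tH)^{-1}$, one gets $\phi'(t)=\pscal{S-(\theta+tH)^{-1}}{H}$, so $\phi(0)=f(\theta)$ and $\phi'(0)=\pscal{S-\theta^{-1}}{H}$, which are exactly the zeroth- and first-order terms appearing in the claim. Writing $A_t\eqdef(\theta+tH)^{-1}$, differentiating once more gives $\phi''(t)=\pscal{A_tHA_t}{H}=\textsf{Tr}(A_tHA_tH)$.

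The crux is the two-sided bound on $\textsf{Tr}(A_tHA_tH)$. Diagonalizing $A_t=U\Lambda U'$ with $\Lambda=\diag(\lambda_1,\dots,\lambda_p)$ and setting $\tilde H\eqdef U'HU$, a short computation gives $\textsf{Tr}(A_tHA_tH)=\sum_{i,j}\lambda_i\lambda_j\tilde H_{ij}^2$, whereas $\normfro{H}^2=\normfro{\tilde H}^2=\sum_{i,j}\tilde H_{ij}^2$ by orthogonal invariance of the Frobenius norm. Because $\lambda_{\textsf{max}}(A_t)=1/\lambda_{\textsf{min}}(\theta+tH)\le\epsilonmod^{-1}$ and $\lambda_{\textsf{min}}(A_t)=1/\lambda_{\textsf{max}}(\theta+tH)\ge\kappamod^{-1}$, every product $\lambda_i\lambda_j$ lies in $[\kappamod^{-2},\epsilonmod^{-2}]$, whence $\kappamod^{-2}\normfro{H}^2\le\phi''(t)\le\epsilonmod^{-2}\normfro{H}^2$ for all $t\in[0,1]$. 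This spectral step, and in particular the identity $\textsf{Tr}(A_tHA_tH)=\sum_{i,j}\lambda_i\lambda_j\tilde H_{ij}^2$, is the only genuinely delicate point; everything else is routine.

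Finally I would invoke Taylor's theorem with remainder: there is $\xi\in(0,1)$ with $\phi(1)=\phi(0)+\phi'(0)+\tfrac{1}{2}\phi''(\xi)$ (equivalently, the integral form $\int_0^1(1-t)\phi''(t)\,dt$). Substituting $\phi(0)=f(\theta)$, $\phi'(0)=\pscal{S-\theta^{-1}}{H}$, and the bound $\kappamod^{-2}\normfro{H}^2\le\phi''(\xi)\le\epsilonmod^{-2}\normfro{H}^2$ yields precisely $f(\theta)+\pscal{S-\theta^{-1}}{H}+\tfrac{1}{2\kappamod^2}\normfro{H}^2\le f(\theta+H)\le f(\theta)+\pscal{S-\theta^{-1}}{H}+\tfrac{1}{2\epsilonmod^2}\normfro{H}^2$, as claimed. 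The boundary case $\kappamod=\infty$ is automatically covered, since then $\kappamod^{-2}=0$ and the lower bound degenerates to the plain convexity inequality.
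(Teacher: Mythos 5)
Your proof is correct and follows essentially the same route as the paper's: a Taylor expansion of $f$ along the segment $\theta+tH$ (which stays in $\M_+(\epsilonmod,\kappamod)$ by convexity of that set), with the second-order term bounded on both sides using the spectral bounds $\kappamod^{-1}\leq\lambda_{\textsf{min}}\left((\theta+tH)^{-1}\right)\leq\lambda_{\textsf{max}}\left((\theta+tH)^{-1}\right)\leq\epsilonmod^{-1}$. The only cosmetic difference is that you compute the second derivative $\phi''(t)=\textsf{Tr}(A_tHA_tH)$ explicitly and invoke the Lagrange remainder, whereas the paper works with the first-order integral remainder, the identity $(\theta+tH)^{-1}-\theta^{-1}=-t\,\theta^{-1}H(\theta+tH)^{-1}$, and a direct bound on $\textsf{Tr}\left(\theta^{-1}H(\theta+tH)^{-1}H\right)$ via the eigendecomposition of $\theta$.
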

\begin{proof}
First notice that $\M_+(\epsilonmod,\kappamod)$ is a convex set. Hence for all $t\in [0,1]$, $\theta+ tH=(1-t)\theta +t(\theta+ H)\in \M_+(\epsilonmod,\kappamod)$. 
Then by Taylor expansion we have,
\[\log\det(\theta+ H) = \log\det\theta +\pscal{\theta^{-1}}{H} +\int_0^1\pscal{(\theta+tH)^{-1}-\theta^{-1}}{H}\rmd t.\]
This gives
\[f(\theta+H) -f(\theta) -\pscal{S-\theta^{-1}}{H} =- \int_0^1\pscal{(\theta+tH)^{-1}-\theta^{-1}}{H}\rmd t.\]
However $(\theta+tH)^{-1}-\theta^{-1}=-t\theta^{-1}H(\theta+tH)^{-1}$. Therefore, if $\theta=\sum_{j=1}^p \lambda_j u_ju_j'$ denotes the eigen-decomposition of $\theta$, we have
\begin{eqnarray*}
-\pscal{(\theta+tH)^{-1}-\theta^{-1}}{H}&=&t\textsf{Tr}\left(\theta^{-1}H(\theta+tH)^{-1}H\right)\\
&=&t\sum_{j=1}^p \lambda_j^{-1} u_j'H(\theta+tH)^{-1}H u_j\\
&\leq &\frac{t}{\epsilonmod^2}\sum_{j=1}^p \|Hu_j\|^2 = \frac{t}{\epsilonmod^2}\|H\|^2.
\end{eqnarray*}
Similarly calculations gives
\[-\pscal{(\theta+tH)^{-1}-\theta^{-1}}{H} \geq \frac{t}{\kappamod^2}\|H\|^2.\]
The lemma is proved.
\end{proof}

We also use the following well known property of the proximal map.
\begin{lemma}\label{lem3}
For all $\theta,\vartheta\in\M$, and for all $\alpha\in [0,1]$, $\gamma>0$,
\[g_\alpha(\Prox_\gamma(\theta;\alpha)) \leq g_\alpha(\vartheta) +\frac{1}{\gamma}\pscal{\vartheta-\Prox_\gamma(\theta;\alpha)}{\Prox_\gamma(\theta;\alpha)-\theta}.\]
\end{lemma}
\begin{proof}
See \cite[Propositions~12.26 and ~12.27]{bauschke:combettes:2011}.
\end{proof}

Lemma \ref{lem2} amd Lemma \ref{lem3} together give the following key result.

\begin{lemma}\label{lem4}
Fix $0<\epsilonmod<\kappamod\leq \infty$, and $\gamma\in(0,\epsilonmod^2]$. Suppose that $\thetaen, \theta\in\M_+(\epsilonmod,\kappamod)$, and $H\in\M$ are such that $\bar\theta\eqdef\Prox_\gamma(\theta -\gamma(S-H);\alpha)\in\M_+(\epsilonmod,\kappamod)$. Then 
\begin{multline*}\normfro{\bar\theta-\thetaen}^2\leq 2\gamma \left(\phi_\alpha(\bar\theta)-\phi_\alpha(\thetaen)\right)  + \normfro{\bar\theta-\thetaen}^2\\
\leq \left(1-\frac{\gamma}{\kappamod^2}\right)\normfro{\theta-\thetaen}^2 +2\gamma \pscal{\bar\theta-\thetaen}{H-\theta^{-1}},\end{multline*}
where we recall that $\phi_\alpha(\theta) = f(\theta)+g_\alpha(\theta)$. 
\end{lemma}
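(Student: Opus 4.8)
The statement bundles two inequalities, and the first of them is essentially free: it is equivalent to $0\le 2\gamma\bigl(\phi_\alpha(\bar\theta)-\phi_\alpha(\thetaen)\bigr)$, which holds because $\thetaen=\hat\theta$ is the global minimizer of $\phi_\alpha$ over $\M_+$ (Lemma~\ref{lem:spec-bounds-1}) and $\gamma>0$. So the plan is to dispose of this in one line and devote the work to the second inequality, which is a proximal-gradient ``descent'' estimate carried out with the \emph{inexact} gradient $S-H$ in place of $S-\theta^{-1}$, keeping track of the resulting error term.

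For the second inequality I would assemble three ingredients. First, apply the upper-bound half of Lemma~\ref{lem2} with increment $\bar\theta-\theta$ to get $f(\bar\theta)\le f(\theta)+\pscal{S-\theta^{-1}}{\bar\theta-\theta}+\tfrac{1}{2\epsilonmod^2}\normfro{\bar\theta-\theta}^2$, and then relax $1/\epsilonmod^2$ to $1/\gamma$ using $\gamma\le\epsilonmod^2$. Second, apply the lower-bound half of Lemma~\ref{lem2} with increment $\thetaen-\theta$ to obtain the strongly-convex estimate $f(\theta)\le f(\thetaen)-\pscal{S-\theta^{-1}}{\thetaen-\theta}-\tfrac{1}{2\kappamod^2}\normfro{\thetaen-\theta}^2$. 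Third, apply the prox inequality Lemma~\ref{lem3} at the point $\theta-\gamma(S-H)$ (whose prox is exactly $\bar\theta$) with comparison point $\vartheta=\thetaen$, yielding
\[
g_\alpha(\bar\theta)\le g_\alpha(\thetaen)+\frac{1}{\gamma}\pscal{\thetaen-\bar\theta}{\bar\theta-\theta+\gamma(S-H)}.
\]
Adding the $f$-bounds to the $g_\alpha$-bound and using $f+g_\alpha=\phi_\alpha$ bounds $\phi_\alpha(\bar\theta)-\phi_\alpha(\thetaen)$ by a sum of inner products and quadratic terms.

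Two bookkeeping steps then finish the argument. First, collect the three $S$-bearing inner products: the $S$ terms cancel and what survives recombines into $\pscal{H-\theta^{-1}}{\bar\theta-\thetaen}$, which is precisely the error term on the right of the claim. Second, rewrite the remaining quadratic and bilinear terms $\tfrac{1}{2\gamma}\normfro{\bar\theta-\theta}^2+\tfrac{1}{\gamma}\pscal{\thetaen-\bar\theta}{\bar\theta-\theta}$ via the elementary identity $\tfrac{1}{2\gamma}\normfro{b}^2-\tfrac{1}{2\gamma}\normfro{a-b}^2=-\tfrac{1}{2\gamma}\normfro{a}^2+\tfrac{1}{\gamma}\pscal{a}{b}$ taken with $a=\bar\theta-\theta$ and $b=\thetaen-\theta$ (so that $a-b=\bar\theta-\thetaen$); these terms become $\tfrac{1}{2\gamma}\normfro{\thetaen-\theta}^2-\tfrac{1}{2\gamma}\normfro{\bar\theta-\thetaen}^2$. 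Multiplying through by $2\gamma$ and moving $\normfro{\bar\theta-\thetaen}^2$ to the left reproduces the claimed bound with contraction factor $1-\gamma/\kappamod^2$.

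The main obstacle is not any single hard estimate but the careful orchestration of signs together with the \emph{asymmetric} use of Lemma~\ref{lem2}: the upper bound must be taken with the ``smoothness'' constant $\epsilonmod^{-2}$ (then relaxed to $\gamma^{-1}$ via $\gamma\le\epsilonmod^2$), whereas the lower bound must use the ``strong-convexity'' constant $\kappamod^{-2}$, and it is exactly this mismatch that produces the nontrivial factor $1-\gamma/\kappamod^2$ rather than a vacuous bound. A secondary point needing attention is hypothesis bookkeeping: Lemma~\ref{lem2} requires the whole segment joining its two arguments to lie in $\M_+(\epsilonmod,\kappamod)$, which is granted by convexity of $\M_+(\epsilonmod,\kappamod)$ and the standing assumption that $\theta,\bar\theta,\thetaen$ all belong to it.
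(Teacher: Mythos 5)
Your proposal is correct and follows essentially the same route as the paper's own proof: the upper (smoothness) bound of Lemma~\ref{lem2} at $\bar\theta$ with constant relaxed to $1/\gamma$, the lower (strong-convexity) bound of Lemma~\ref{lem2} comparing $\theta$ and $\thetaen$ with constant $1/\kappamod^2$, and the prox inequality of Lemma~\ref{lem3} with comparison point $\thetaen$, combined so that the $S$-terms cancel into $\pscal{\bar\theta-\thetaen}{H-\theta^{-1}}$ and the quadratic terms recombine into the contraction factor $1-\gamma/\kappamod^2$. The paper likewise dispatches the first inequality by $\phi_\alpha(\bar\theta)\geq\phi_\alpha(\thetaen)$, so the two arguments differ only in presentation order and bookkeeping.
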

\begin{proof}
Set $f(\theta)=-\log\det\theta +\textsf{Tr}(\theta S)$, $\theta\in\M_+$. By Lemma \ref{lem2},
\begin{equation}\label{prop1_eq1}
f(\bar \theta) \leq  f(\theta) +\pscal{S-\theta^{-1}}{\bar \theta-\theta} + \frac{1}{2\gamma}\normfro{\bar \theta-\theta}^2.\end{equation}
Subtracting $f(\thetaen)$ from both sides of  the above inequality and re-arranging gives
\begin{eqnarray}\label{prop1_eq2}
f(\bar \theta)-f(\thetaen) &\leq &\left[f(\theta)+\pscal{S-\theta^{-1}}{\thetaen-\theta} -f(\thetaen)\right]\nonumber\\
&&+\pscal{S-\theta_i^{-1}}{\bar \theta-\thetaen} + \frac{1}{2\gamma}\normfro{\bar \theta-\theta}^2.\end{eqnarray}
Since $\theta, \thetaen\in\M_+(\epsilonmod,\kappamod)$, the strong convexity of $\theta\mapsto -\log\det\theta+\textsf{Tr}(\theta S)$ established in Lemma \ref{lem2} implies that $f(\theta)+\pscal{S-\theta^{-1}}{\thetaen-\theta} -f(\thetaen)\leq -\frac{1}{2\kappamod^2}\normfro{\theta-\thetaen}^2$. Using this in (\ref{prop1_eq2}) gives
\begin{equation}\label{prop1_eq3}  f(\bar\theta)-f(\thetaen) \leq -\frac{1}{2\kappamod^2}\normfro{\theta-\thetaen}^2 +\pscal{S-\theta^{-1}}{\bar\theta-\thetaen} + \frac{1}{2\gamma}\normfro{\bar\theta-\theta}^2.\end{equation}
By Lemma \ref{lem3},
\begin{eqnarray}\label{prop1_eq4}
g_\alpha(\bar\theta) -g_\alpha(\thetaen) & \leq & \frac{1}{\gamma}\pscal{\thetaen-\bar\theta}{\bar\theta-\left(\theta-\gamma(S-H)\right)},\nonumber\\
&=&  \frac{1}{\gamma}\pscal{\thetaen-\bar\theta}{\bar\theta-\theta} + \pscal{\thetaen-\bar\theta}{S-H}.\;\;\;\;\;\;
\end{eqnarray}
We combine (\ref{prop1_eq3}) and (\ref{prop1_eq4}) and  re-arrange to deduce that
\begin{multline}\label{thm1:eq5}
\phi_\alpha(\bar\theta)-\phi_\alpha(\thetaen)\leq -\frac{1}{2\kappamod^2}\normfro{\theta-\thetaen}^2 +\frac{1}{2\gamma}\pscal{\bar\theta-\theta}{2\thetaen-\bar\theta-\theta} +\pscal{\bar\theta-\thetaen}{H-\theta^{-1}}\\
=\frac{1}{2}\left(\frac{1}{\gamma}-\frac{1}{\kappamod^2}\right)\normfro{\theta-\thetaen}^2 -\frac{1}{2\gamma}\normfro{\bar\theta-\thetaen}^2 +\pscal{\bar\theta-\thetaen}{H-\theta^{-1}}.
\end{multline}
Since $\phi_\alpha(\bar \theta)\geq \phi_\alpha(\thetaen)$, we conclude that
\[\normfro{\bar\theta-\thetaen}^2\leq 2\gamma \left(\phi_\alpha(\bar\theta)-\phi_\alpha(\thetaen)\right)  + \normfro{\bar\theta-\thetaen}^2\leq \left(1-\frac{\gamma}{\kappamod^2}\right)\normfro{\theta-\thetaen}^2 +2\gamma\pscal{\bar\theta-\thetaen}{H-\theta^{-1}},\]
as claimed.
\end{proof}

\begin{lemma}\label{lem:exp:bound}
Take $\epsilonmod>0$, and $\theta\in\M_+(\epsilonmod)$. Let $z_{1:N}\stackrel{i.i.d.}{\sim}\textbf{N}(0,\theta^{-1})$, and set $G_N\eqdef N^{-1}\sum_{i=1}^N z_iz_i'$.  Then 
\[\PE\left[\normfro{G_N-\theta^{-1}}^2\right]\leq \frac{p+p^2}{N\epsilonmod^2},\]
and for any $\deltamod>0$ such that $\epsilonmod\deltamod\leq 4$,
\[\PP\left(\|G_N-\theta^{-1}\|_\infty>\deltamod\right)\leq 4p^2\exp\left(-\min(1,\epsilonmod^2\deltamod^2/16) N\right).\]
\end{lemma}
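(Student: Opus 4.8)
The plan is to establish the two assertions separately, writing $\Sigma\eqdef\theta^{-1}$ throughout and recording at the outset the spectral bound $\lambda_{\textsf{max}}(\Sigma)=\lambda_{\textsf{min}}(\theta)^{-1}\le\epsilonmod^{-1}$, which will be used repeatedly.

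For the second-moment bound I would first reduce to a single sample: since $G_N-\Sigma=N^{-1}\sum_{i=1}^N(z_iz_i'-\Sigma)$ is an average of $N$ i.i.d.\ centred terms, one has $\PE\normfro{G_N-\Sigma}^2=N^{-1}\PE\normfro{z_1z_1'-\Sigma}^2$. I would then evaluate this entrywise, $\PE\normfro{z_1z_1'-\Sigma}^2=\sum_{i,j}\left(\PE[z_{1,i}^2z_{1,j}^2]-\Sigma_{ij}^2\right)$, and invoke Isserlis' (Wick's) formula $\PE[z_{1,i}z_{1,j}z_{1,k}z_{1,l}]=\Sigma_{ij}\Sigma_{kl}+\Sigma_{ik}\Sigma_{jl}+\Sigma_{il}\Sigma_{jk}$ for centred Gaussians, giving $\PE[z_{1,i}^2z_{1,j}^2]-\Sigma_{ij}^2=\Sigma_{ii}\Sigma_{jj}+\Sigma_{ij}^2$. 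Summing over $i,j$ yields $\PE\normfro{z_1z_1'-\Sigma}^2=(\textsf{Tr}\,\Sigma)^2+\normfro{\Sigma}^2$, and the claimed bound follows from $\textsf{Tr}\,\Sigma\le p\,\lambda_{\textsf{max}}(\Sigma)\le p\epsilonmod^{-1}$ and $\normfro{\Sigma}^2\le p\,\lambda_{\textsf{max}}(\Sigma)^2\le p\epsilonmod^{-2}$.

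For the tail bound, recall $\|A\|_\infty=\max_{i,j}|A_{ij}|$, so a union bound over the (symmetric) entries reduces matters to controlling $\PP(|(G_N-\Sigma)_{ij}|>\deltamod)$ for each pair. The key structural observation is that for any fixed $a\in\rset^p$ the scalars $(a'z_k)^2$ form an i.i.d.\ scaled chi-square sample with scale $a'\Sigma a\le\|a\|^2\lambda_{\textsf{max}}(\Sigma)\le\|a\|^2\epsilonmod^{-1}$. Writing $S_N(a)\eqdef N^{-1}\sum_k\left[(a'z_k)^2-a'\Sigma a\right]$, a diagonal entry is exactly $S_N(e_i)$, while an off-diagonal entry is handled by the polarization identity $(G_N-\Sigma)_{ij}=\tfrac14\left[S_N(e_i+e_j)-S_N(e_i-e_j)\right]$. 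Since $\|e_i\|^2=1$ and $\|e_i\pm e_j\|^2=2$, every quadratic form appearing has scale at most $2\epsilonmod^{-1}$; after normalising by the scale, each event $\{|S_N(a)|>c\}$ becomes a deviation of a standardised centred chi-square average $\bar X_N\eqdef N^{-1}\sum_k(g_k^2-1)$, $g_k$ i.i.d.\ $N(0,1)$, past a threshold $t\ge\epsilonmod\deltamod$.

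It remains to apply a Chernoff bound to $\bar X_N$ using the exact cumulant generating function $\log\PE[e^{s(g^2-1)}]=-s-\tfrac12\log(1-2s)$, optimised over $0\le s<1/2$; this gives the sharp upper-tail rate $\tfrac12\bigl(t-\log(1+t)\bigr)$ at level $t$, together with a (sub-Gaussian, hence easier) lower-tail rate, the lower tail being moreover vacuous once $t\ge1$ because $\bar X_N\ge-1$ almost surely. The prefactor $4p^2$ then arises from at most $p^2$ entries, each contributing the two tails of the single chi-square average (diagonal case) or the two tails of the two chi-square averages produced by polarization (off-diagonal case). The main obstacle I anticipate lies entirely in the constants: one must verify that these exact rates dominate the target exponent, that is $\tfrac12\bigl(t-\log(1+t)\bigr)\ge\min(1,t^2/16)$ for $0\le t\le4$ together with the analogous, milder lower-tail inequality. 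This elementary one-variable calculus fact, whose verification is exactly where the hypothesis $\epsilonmod\deltamod\le4$ is consumed, is what forces the use of the sharp chi-square bound rather than the cruder sub-exponential Bernstein estimate $\exp\bigl(-\tfrac N8\min(t,t^2)\bigr)$, which loses the factor needed to reach $1$ in the exponent near $\epsilonmod\deltamod=4$.
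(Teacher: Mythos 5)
Your proposal is correct, and its overall architecture coincides with the paper's own proof: the same reduction of $\PE\normfro{G_N-\theta^{-1}}^2$ to a single sample followed by Isserlis' formula, giving $N^{-1}\left((\textsf{Tr}\,\theta^{-1})^2+\normfro{\theta^{-1}}^2\right)\le (p+p^2)/(N\epsilonmod^2)$; and the same polarization identity for off-diagonal entries, $z_{k,i}z_{k,j}=\tfrac14\left[(z_{k,i}+z_{k,j})^2-(z_{k,i}-z_{k,j})^2\right]$, combined with the bound $\sigma_{ii}+\sigma_{jj}\pm2\sigma_{ij}\le 2/\epsilonmod$ and a union bound over the $p^2$ entries (two tails times two polarization terms, whence the prefactor $4p^2$), reducing everything to a two-sided deviation of a standardized $\chi^2_1$-average past the threshold $\epsilonmod\deltamod$. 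The single point of divergence is how that chi-square deviation bound is obtained: the paper invokes Lemma~1 of Laurent and Massart (2000) as a black box, in the corollary form $\PP\left[\left|\sum_{k=1}^N(W_k-1)\right|>4\sqrt{x}N\right]\le 2e^{-Nx}$ for $x\in[0,1]$, whereas you re-derive the tail directly by Chernoff from the exact cumulant generating function, getting the rates $\tfrac12\left(t-\log(1+t)\right)$ (upper) and $\tfrac12\left(-t-\log(1-t)\right)$ (lower, vacuous for $t\ge1$), and then verify the elementary inequalities $\tfrac12\left(t-\log(1+t)\right)\ge t^2/16$ on $[0,4]$ (the difference has derivative $t(3-t)/(8(1+t))$ and value $1-\tfrac12\log 5>0$ at $t=4$) and $\tfrac12\left(-t-\log(1-t)\right)\ge t^2/4$ on $[0,1)$. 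Both routes are of comparable length and yield the stated constants; yours is self-contained, marginally sharper, and makes transparent exactly where the constant $16$ and the hypothesis $\epsilonmod\deltamod\le4$ enter, while the paper's citation shortens the write-up at the cost of importing an external lemma.
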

\begin{proof}
\begin{multline*}
\PE\left[\normfro{G_N-\theta^{-1}}^2\right]=\sum_{j,k}\PE\left[\left(\frac{1}{N}\sum_{i=1}^N(z_iz_i')_{j,k}-\theta^{-1}_{j,k}\right)^2\right] = \frac{1}{N}\sum_{j,k}\PE\left[\left(z_1z_1')_{j,k}-\theta^{-1}_{j,k}\right)^2\right]\\
=\frac{1}{N}\sum_{l,k}\left(\theta_{j,j}^{-1}\theta_{k,k}^{-1}+(\theta^{-1}_{j,k})^2\right) = \frac{1}{N}\left(\textsf{Tr}(\theta^{-1})^2 +\normfro{\theta^{-1}}^2\right)\leq \frac{1}{N}\left(\left(\frac{p}{\epsilonmod}\right)^2 + \frac{p}{\epsilonmod^2}\right).\end{multline*}

For the exponential bound, we reduce the problem to an exponential bound for chi-squared distributions, and apply the following corollary of Lemma 1 of \cite{laurent:massart:00}. Let $W_{1:N}\stackrel{i.i.d.}{\sim}\chi^2_1$, the chi-square distribution with one degree of freedom. For any $x\in [0,1]$,
\begin{equation}\label{lem:lm}
\PP\left[\left|\sum_{k=1}^N(W_k-1)\right|>4\sqrt{x} N\right]\leq 2e^{-Nx}.\end{equation}

For $1\leq i,j\leq p$, arbitrary, set $Z^{(k)}_{ij}=z_{k,i}z_{k,j}$, and $\sigma_{ij}=\theta^{-1}_{ij}$. Suppose that $i\neq j$. It is easy to check that
\begin{multline*}
\sum_{k=1}^N \left[Z^{(k)}_{ij}-\sigma_{ij}\right]=\frac{1}{4}\sum_{k=1}^N \left[(z_{k,i}+z_{k,j})^2-\sigma_{ii}-\sigma_{jj}-2\sigma_{ij}\right] \\
-\frac{1}{4}\sum_{k=1}^N \left[(z_{k,i}-z_{k,j})^2-\sigma_{ii}-\sigma_{jj}+2\sigma_{ij}\right].\end{multline*}
Notice that $z_{k,i}+z_{k,j}\sim\textbf{N}(0,\sigma_{ii}+\sigma_{jj}+2\sigma_{ij})$, and $z_{k,i}-z_{k,j}\sim\textbf{N}(0,\sigma_{ii}+\sigma_{jj}-2\sigma_{ij})$. It follows that for all $x\geq 0$,
\begin{multline*}
\PP\left[\left|\sum_{k=1}^N \left[Z^{(k)}_{ij}-\sigma_{ij}\right]\right|> x\right]\leq \PP\left[\left|\sum_{k=1}^N(W_k-1)\right|>\frac{2x}{\sigma_{ii} + \sigma_{jj} +2\sigma_{ij}}\right] \\
+2\PP\left[\left|\sum_{k=1}^N(W_k-1)\right|>\frac{2x}{\sigma_{ii} + \sigma_{jj} -\sigma_{ij}}\right],\\
\leq 2\PP\left[\left|\sum_{k=1}^N(W_k-1)\right|>\epsilonmod x\right].
\end{multline*}
where $W_{1:N}\stackrel{i.i.d.}{\sim}\chi^2_1$, the chi-square distribution with one degree of freedom. The last inequality uses the fact that $\sigma_{ii} + \sigma_{jj} +2\sigma_{ij} =u'\theta^{-1}u\leq \frac{1}{\epsilonmod}\|u\|^2\leq \frac{2}{\epsilonmod}$, where $u$ is the vector with $1$ on components $i$ and $j$ and zero everywhere else (similarly for $\sigma_{ii} + \sigma_{jj} -2\sigma_{ij}$ by putting $-1$ on the $j$-th entry). Then we apply (\ref{lem:lm}) to obtain
\[\PP\left[\left|\sum_{k=1}^N \left[Z^{(k)}_{ij}-\sigma_{ij}\right]\right|> N\deltamod\right]\leq 4e^{-\min(1,\epsilonmod^2\deltamod^2/16) N}.\]
When $i=j$, the bound $\PP\left[\left|\sum_{k=1}^N \left[Z^{(k)}_{ij}-\sigma_{ij}\right]\right|> x\right]\leq \PP\left[\left|\sum_{k=1}^N(W_k-1)\right|>\epsilonmod x\right]$ is straightforward. The lemma follows from a standard union-sum argument.
\end{proof}

\end{document}